\definecolor{MidnightBlue}{rgb}{0.1, 0.1, 0.44}
\pgfplotsset{compat=1.17}
\theoremstyle{plain}
\newtheorem{prototheorem}{theorem}
\newtheorem{theorem}[prototheorem]{Theorem}
\theoremstyle{plain}
\theoremstyle{plain}
\newtheorem{prototheorem3}{theorem}
\newtheorem{lemma}[prototheorem3]{Lemma}
\theoremstyle{remark}
\newcommand{\R}{\mathbb{R}}
\newcommand{\W}{\boldsymbol{\mathcal{W}}}
\newcommand{\Unif}{\mathrm{Unif}}
\newcommand{\NURS}{\mathrm{NURS}}
\newcommand{\slice}{\mathrm{slice}}
\newcommand{\orbit}{\mathrm{Orbit}}
\newcommand{\ext}{\mathrm{ext}}
\newcommand{\old}{\mathrm{old}}
\newcommand{\gHR}{\mathrm{gH\&R}}
\newcommand{\C}{\mathcal{C}}
\newcommand{\RWM}{\mathrm{RWM}}
\newcommand{\lb}[1]{{\lfloor #1 \rfloor_h}}
\newcommand{\calO}{\mathcal{O}}
\newcommand{\rd}{\mathrm{d}}
\newcommand{\leftmost}{\mathrm{left}}
\newcommand{\rightmost}{\mathrm{right}}
\newcommand{\indc}[1]{{\mathbf{1}_{\left\{{#1}\right\}}}}
\newcommand{\Indc}[1]{{\mathbf{1}\left\{{#1}\right\}}}
\newcommand{\cat}{{\mathrm{categorical}}}
\begin{document}

\begin{frontmatter}
\title{The No-Underrun Sampler: A Locally Adaptive, Gradient-Free MCMC Method}
\runtitle{Locally Adaptive, Gradient-Free MCMC}

\begin{aug}
\author[A]{\fnms{Nawaf}~\snm{Bou-Rabee}\ead[label=e1]{nawaf.bourabee@rutgers.edu}},
\author[B]{\fnms{Bob}~\snm{Carpenter}\ead[label=e2]{bcarpenter@flatironinstitute.org}},
\author[C]{\fnms{Sifan}~\snm{Liu}\ead[label=e3]{sliu@flatironinstitute.org}},
\and
\author[D]{\fnms{Stefan}~\snm{Oberd\"orster}\ead[label=e4]{oberdoerster@uni-bonn.de}}

\address[A]{Department of Mathematical Sciences, Rutgers University\printead[presep={ ,\ }]{e1}}
\address[B]{Center for Computational Mathematics, Flatiron Institute\printead[presep={ ,\ }]{e2}}
\address[C]{Center for Computational Mathematics, Flatiron Institute\printead[presep={ ,\ }]{e3}}
\address[D]{Institute for Applied Mathematics, University of Bonn\printead[presep={,\ }]{e4}}
\end{aug}

\begin{abstract}
In this work, we introduce the No-Underrun Sampler (NURS), a locally-adaptive, gradient-free Markov chain Monte Carlo method that blends ideas from Hit-and-Run and the No-U-Turn Sampler.   NURS dynamically adapts to the local scale of the target distribution without requiring gradient evaluations, making it especially suitable for applications where gradients are unavailable or costly. We establish key theoretical properties, including reversibility, formal connections to Hit-and-Run and Random Walk Metropolis, Wasserstein contraction comparable to Hit-and-Run in Gaussian targets, and bounds on the total variation distance between the transition kernels of Hit-and-Run and NURS. Empirical experiments, supported by theoretical insights, illustrate the ability of NURS to sample from Neal's funnel, a challenging multi-scale distribution from Bayesian hierarchical inference.
\end{abstract}

\begin{keyword}[class=MSC]
\kwd[Primary ]{60J05}
\kwd[; secondary ]{65C05}
\end{keyword}

\begin{keyword}
\kwd{Locally adaptive MCMC}
\kwd{Gibbs sampler}
\kwd{slice sampler}
\kwd{No-U-Turn sampler}
\kwd{Hit-and-Run}
\end{keyword}
\end{frontmatter}

\maketitle

\section{Introduction}

The No-U-Turn Sampler (NUTS) has become one of the most widely used Markov Chain Monte Carlo (MCMC) methods for sampling from continuous high-dimensional distributions \cite{HoGe2014,carpenter2016stan,salvatier2016probabilistic,nimble-article:2017,ge2018t,phan2019composable}.  It leverages a leapfrog approximation of Hamiltonian flow, which relies on the gradient of the log-density to efficiently navigate the geometry of the target distribution \cite{betancourt2017conceptual}. While highly effective, this reliance on gradients limits its applicability in scenarios where derivatives are difficult or impossible to compute. 

Such scenarios are common in fields like natural sciences and engineering, where likelihoods often depend on computationally expensive solvers for partial differential equations or are implemented in large legacy codebases (e.g., in Fortran or C) that are incompatible with automatic differentiation. As a result, gradient-free methods remain the standard in many practical applications, such as astrophysics, where ensemble methods dominate \cite{foreman2013emcee,goodman2010ensemble}.

This raises an important question: Can the key ideas behind NUTS be adapted to create a gradient-free method? We show that they can with the No-Underrun Sampler (NURS), a new gradient-free MCMC method that blends NUTS’s orbit-based exploration with the simplicity of gradient-free methods. NURS builds its orbits using constant velocity motion, making it both gradient-free and parallelizable. In short, NURS adjusts dynamically to the local geometry of the target distribution—just like NUTS—but without needing gradients.

NURS is closely related to the Hit-and-Run method, a classic sampling algorithm that explores a target distribution by sampling along randomly chosen lines \cite{Diaconis2007HitandRun,chewi2022query,ascolani2024,BoEbOb2024}. While Hit-and-Run has strong theoretical properties \cite{BoEbOb2024}, its exact implementation is often computationally impractical because it requires exact sampling from the target distribution restricted to these lines. NURS addresses this challenge by approximating the sampling process along lines, making the method more practical while preserving key advantages of Hit-and-Run.

The remainder of this section provides an overview of ideal and practical slice sampling along with sliced and slice-free (or multinomial) NUTS.  NURS evolved from this chain of development as a gradient-free counterpart to NUTS and an implementable form of the Hit-and-Run sampler.

\paragraph*{Slice sampling}

The slice sampler reduces the problem of sampling from an absolutely continuous target distribution $\mu$ on $\mathbb R^d$   to alternately sampling from two  uniform distributions.  Denote the target's density also by $\mu$ and for $y \ge 0$ define
\[ \slice(y)\ =\ \bigr\{\theta\in\mathbb R^d\ :\ \mu(\theta) > y \bigr\}\;. \]
The slice sampler generates a Markov chain with transition step given in Algorithm~\ref{alg:slice}.

\begin{algorithm}[ht]
\caption{Slice Sampler: $\ \theta'\sim\pi_{\mathrm{slice}}(\theta,\cdot)$}\label{algo:slice}
\begin{description}
\item[Inputs:] $\theta \in \mathbb{R}^d$ (current state);  \ \ $\mu$ (non-normalized target density on $\mathbb{R}^d$)
\vspace*{2pt}
\hrule
\end{description}
\begin{enumerate}
\item Sample height variable $y\sim\mathrm{Unif}\bigr([0,\,\mu(\theta)]\bigr)$.
\item Sample new state $\theta'\sim\Unif\bigr(\slice(y)\bigr)$.
\end{enumerate}
\label{alg:slice}
\end{algorithm}

This sampler, illustrated in Figure~\ref{fig:slice_sampler}, is equivalent to a systematic-scan Gibbs sampler targeting the uniform distribution over the positive sub-graph of $\mu$ defined by
\[ \mathcal{G}^+(\mu)\ =\ \bigr\{ (\theta,y) \in \mathbb{R}^d\times\mathbb R\ :\ 0 \le y \le \mu(\theta) \bigr\} \;. \] 
Under this interpretation, the slice sampler alternates between sampling $y \sim \Unif\bigr([0,\mu(\theta)]\bigr)$ and $\theta \sim \Unif\bigr(\slice(y)\bigr)$, corresponding to the conditional updates in a Gibbs sampler \cite{mira2003slice}. The correctness of slice sampling is established by showing that if $(\theta,y) \sim \Unif\bigr(\mathcal{G}^+(\mu)\bigr)$, then the marginal distribution of $\theta$ has density proportional to $\mu$ \cite{chen1998toward,RobertCassella}.   With uniform conditionals, it is also straightforward to show that the transition kernel of the slice sampler is reversible with respect to $\mu$.

\begin{figure}
\centering
\includegraphics[width=0.95\textwidth]{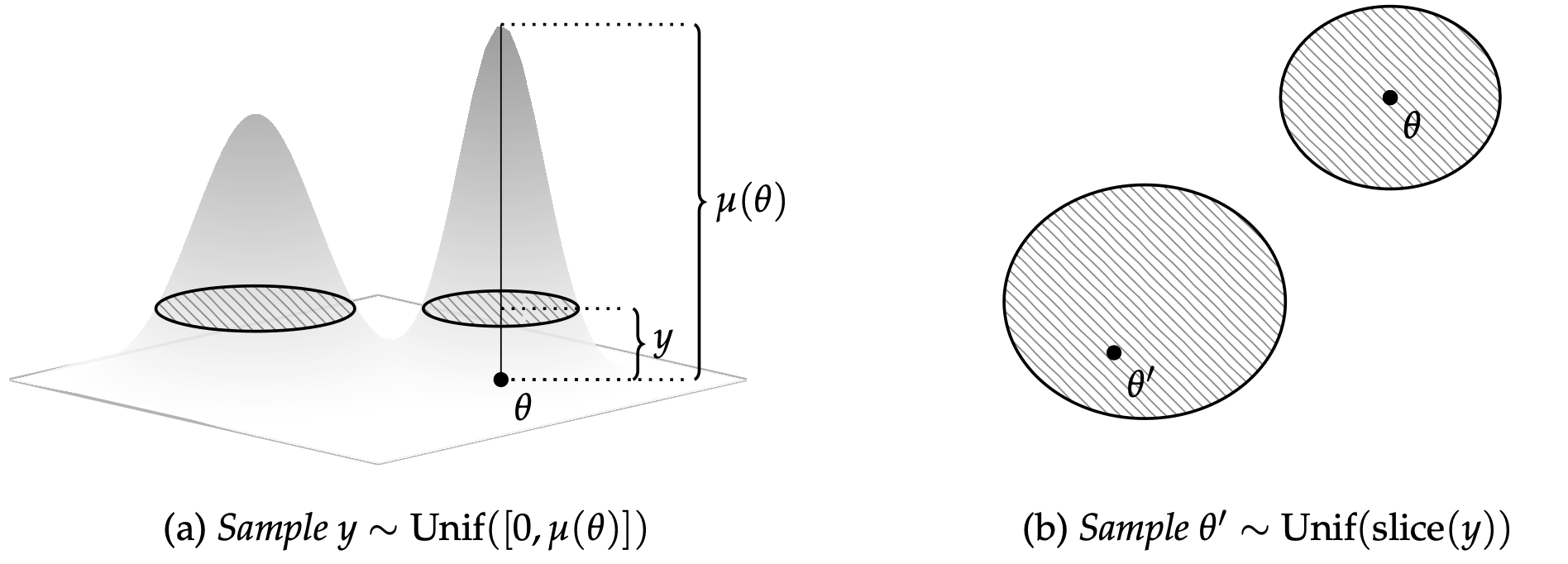}
\caption{\textit{Starting from state $\theta$, the slice sampler first samples a height $y$ uniformly from $[0,\mu(\theta)]$, as shown in (a). It then samples a new state uniformly from the slice of state space where the (non-normalized) density $\mu$ exceeds this height, the gray-shaded region in (b).}}
\label{fig:slice_sampler}
\end{figure}

The slice sampler is appealing because it has no tuning parameters and enables global moves.   However,  directly sampling from the uniform distribution over the slice is often computationally infeasible in practice. The slice sampler is formalized in \cite{neal1997markov,Neal2003Slice}, and under mild regularity conditions, it is provably geometrically ergodic \cite{roberts1999convergence,mira2002efficiency}.   Recent studies have further analyzed its convergence properties, including Wasserstein contraction rates and spectral gaps  \cite{natarovskii2021quantitative,schar2023wasserstein}.

\paragraph*{Neal's hybrid slice sampler}

Since direct sampling from the slice is typically intractable, Neal introduced \emph{hybrid slice samplers} to approximate Step 2 of Algorithm \ref{algo:slice}.  These methods replace exact sampling from the slice with Markov chains that leave the uniform distribution over the slice invariant \cite{Neal2003Slice,murray2010elliptical,Łatuszyński_Rudolf_2024}.  Specifically, subsets of state space are constructed locally, restricted to which sampling from the slice is feasible.  Neal proposed several strategies for selecting such subsets, including the stepping-out and doubling procedures. We briefly review Neal's doubling procedure in one dimension (see Figure~\ref{fig:slice_doubling}), since it is a precursor to the doubling procedure in NUTS.

Given an initial state $\theta \in \mathbb{R}$, a width parameter $w>0$, and a height variable $y$, the doubling procedure builds a manageable subset as follows:
\begin{enumerate}
    \item Start with an interval of size $w$ that is uniformly randomly positioned around the current state $\theta$.  
    \item Expand the interval iteratively by doubling its length at each iteration, with equal probability of expanding to the left or right, until both endpoints lie outside the slice (see Figure~\ref{fig:slice_doubling}).
\end{enumerate}
Given the final interval produced by the doubling procedure, the next state $\theta'$ is selected uniformly from the intersection of this interval and $\slice(y)$.  This procedure is reversible with respect to the uniform distribution on the slice defined by the height variable $y$.

Recent analyses based on Dirichlet forms highlight the trade-offs between practicality and theoretical efficiency of hybrid slice samplers \cite{power2024weak}.   Extensions such as polar and elliptical slice samplers address challenges in specific high-dimensional settings \cite{roberts2002polar,murray2010elliptical}.  Understanding the convergence properties of these methods remains an active area of research \cite{pmlr-v202-schar23a,schar2023wasserstein,rudolf2024dimension,Łatuszyński_Rudolf_2024}.

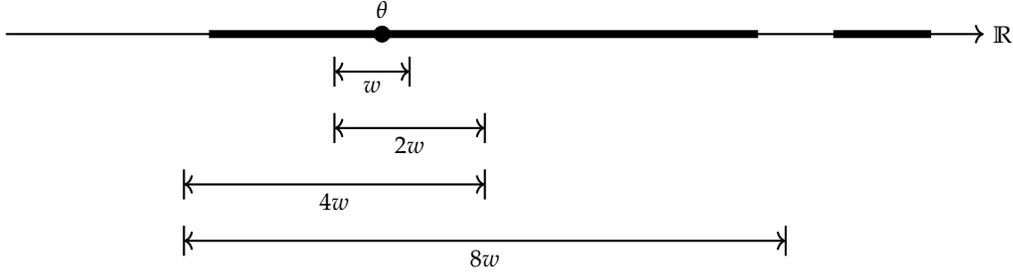
\begin{figure}\centering
\begin{tikzpicture}[scale=1]
\pgfmathsetseed{123}
\draw[thick,->] (-5,0) -- (8,0) node[right] {$\mathbb{R}$};
\draw[line width=3pt] (-2.3,0.) -- (5.,0.);  
\draw[line width=3pt] (6,0.) -- (7.3,0.);  
\filldraw[black] (0,0) circle (3pt) node[above, yshift=0.1cm] {$\theta$};
\pgfmathsetmacro{\U}{rnd}
\pgfmathsetmacro{\x}{0}
\pgfmathsetmacro{\vp}{-0.3}
\pgfmathsetmacro{\vmid}{-0.5}
\pgfmathsetmacro{\vm}{-0.7}
\pgfmathsetmacro{\a}{-\U}
\pgfmathsetmacro{\b}{\a+1}
\draw[thick, <->] (\a,\vmid) -- (\b,\vmid) node[below, midway] {$w$};
\draw[thick] (\a,\vp) -- (\a,\vm);
\draw[thick] (\b,\vp) -- (\b,\vm);
\foreach \i in {1,...,3} {
    \pgfmathsetmacro{\vp}{\vp-0.75};
    \xdef\vp{\vp}
    \pgfmathsetmacro{\vm}{\vm-0.75};
    \xdef\vm{\vm}
    \pgfmathsetmacro{\vmid}{\vmid-0.75};
    \xdef\vmid{\vmid}
    \pgfmathsetmacro{\U}{rnd}
    \ifdim  \U pt > 0.5 pt 
        \pgfmathsetmacro{\a}{\a-(\b-\a)}
        \xdef\a{\a}
    \else 
        \pgfmathsetmacro{\b}{\b+(\b-\a)}
        \xdef\b{\b}
    \fi
     \pgfmathtruncatemacro{\d}{2^(\i)}
    \draw[thick, <->] (\a,\vmid) -- (\b,\vmid) node[below, midway] {$\d w$};
    \draw[thick] (\a,\vp) -- (\a,\vm);
    \draw[thick] (\b,\vp) -- (\b,\vm);
}
\end{tikzpicture}
\caption{\textit{Starting from a state $\theta$, Neal's doubling procedure begins by uniformly at random selecting an interval of size $w$ that contains $\theta$. The interval is then recursively doubled, expanding either to the left or right with equal probability. This doubling continues until both endpoints lie outside the slice, represented by the two bold line segments.}}
\label{fig:slice_doubling}
\end{figure}

\paragraph*{Evolution of slice-based NUTS from Neal's hybrid slice sampler}
The No-U-Turn Sampler (NUTS) \cite{HoGe2014,betancourt2017conceptual, carpenter2016stan,NUTS1}, introduced by Hoffman and Gelman in 2011, uses ideas from Neal's hybrid slice sampler to locally adapt the path length in Hamiltonian Monte Carlo \cite{DuKePeRo1987,Ne2011}.  In particular, NUTS can be interpreted as a hybrid slice sampler in phase space $\mathbb R^{2d}$ where the position variable $\theta \in \mathbb{R}^d$ is augmented by a velocity variable $\rho\in \mathbb{R}^d$.  

Slices are then taken from a product of the target measure $\mu$ and a Gaussian meaure in $\mathbb R^d$, e.g., the product measure with non-normalized density $e^{-H(\theta,\,\rho)}$ defined by the Hamiltonian function $H: \mathbb{R}^{2d} \to \mathbb{R}$,
\[ H(\theta, \rho)\ =\ -\log \mu(\theta)\ +\ \frac{1}{2} |\rho|^2\;. \]
Given the current state $\theta \in \mathbb{R}^d$, each NUTS transition step samples a velocity $\rho \sim \mathcal{N}(0,I_d)$ and a height variable $y \sim \Unif\bigr([0, e^{-H(\theta,\,\rho)}]\bigr)$.  The ingenuity of NUTS lies in how it constructs a manageable subset of $\slice(y) \subset \mathbb{R}^{2d}$ to sample from.  NUTS introduces the concept of an ``orbit'' built by the leapfrog discretization of the Hamiltonian flow associated with $H$.  An essential property of this leapfrog integrator is that it is volume-preserving owing to its symplecticity---a crucial property for the reversibility of NUTS.  

To construct an orbit $\calO$, NUTS uses the doubling procedure from Neal's hybrid slice sampler.  Starting from $(\theta,\rho)$, NUTS repeatedly applies the leapfrog integrator to expand the orbit either forward or backward in time with equal probability, doubling its length in physical time with each expansion.  This process continues until a \emph{U-turn} is detected.   Given the constructed orbit $\calO$, the next state $\theta'$ is the position component of a uniform sample from $\calO \cap\slice(y)$.  

Due to the leapfrog algorithm's approximate energy conservation and the uniform selection of the height variable over $[0, e^{-H(\theta,\,\rho)}]$,  most points in the leapfrog orbit remain within the slice when an appropriate leapfrog step size is used.  Importantly, the orbit selection step in NUTS is independent of the slice used for final state selection. This insight led to a key refinement of NUTS, which we discuss next.

\paragraph*{Increasing precision with slice-free NUTS} Recognizing that orbit selection is independent of the slice allows for replacing slice-based state selection with categorical state selection \cite{betancourt2017conceptual}. Instead of selecting the next state uniformly from points within the slice, this approach selects it directly from the leapfrog orbit, assigning each point a probability weight proportional to its density $e^{-H}$. Due to energy errors introduced by the leapfrog integrator, this categorical distribution is not uniform.   This slice-free variant of NUTS is also known as multinomial NUTS \cite{betancourt2017conceptual}. 

Given a countable set $S$ and a discrete density $f: S \to \mathbb{R}_{ \ge 0}$ satisfying $\sum_{x \in S} f(x)< \infty$,
we define the categorical distribution on $S$ weighted by $f$ as:
\begin{equation}\label{eq:cat}
    X\ \sim\ \cat(S,f) \quad\text{if}\quad\mathbb P(X=x)\ \propto\ f(x)\quad\text{for all $x\in S.$}
\end{equation}
In the context of NUTS, this means that each state in $\calO$ is weighted according to the value of $e^{-H}$ at that state.  Compared to slice-based state selection, which selects the next state uniformly from the subset of $\calO$ that lies in the slice, categorical state selection incorporates the precise value of $e^{-H}$ along $\calO$.  This increased precision results in improved performance, as shown in Figure \ref{fig:nounderrun_OS} (b), which shows that selecting the state based on density values rather than a uniform distribution results in more directed motion along the orbit.

\paragraph*{A gradient-free counterpart to slice-free NUTS}  Gradient-free sampling can be achieved by replacing the leapfrog integrator in NUTS with any volume-preserving, gradient-free map.  A natural choice is constant velocity motion, which corresponds to Hamiltonian flow with constant potential energy.  Since the velocity is constant, orbits are fully defined in position space and can be represented as a sequence of points $\calO = (\theta^{\leftmost}, \dots, \theta^{\rightmost})$ lying along lines passing through the current state $\theta$ in the direction of the initial velocity $\rho$.  The U-turn stopping condition of NUTS, designed for curved Hamiltonian trajectories, must be replaced by a stopping condition suitable for constant velocity motion.  

As in slice-free NUTS, the next state $\theta'$ is sampled from $\calO$ according to the categorical distribution, i.e.,  
\begin{equation}\label{eq:catorbit}
\theta' \sim \cat(\calO, \mu)  \quad\text{if}\quad   \mathbb P( \theta' = \tilde{\theta} )\ =\ \frac{\mu(\tilde{\theta})}{\sum_{\breve{\theta} \in \calO} \mu(\breve{\theta})} \quad\text{for all $\tilde{\theta}\in \calO.$}
\end{equation}
Thus, the probability of selecting $\tilde{\theta} \in \calO$ is proportional to $\mu(\tilde{\theta})$, ensuring that the next state is chosen according to the precise target density values along the orbit.

\paragraph*{An implementable Hit-and-Run} Without a stopping condition, the orbit construction of this gradient-free sampler produces an infinite lattice along the line $L(\theta,\rho)=\theta+\mathbb R\,\rho$ through $\theta$ in direction $\rho$.  From this lattice, the subsequent state, as in NUTS, is selected according to the categorical distribution weighted by the target density along the lattice, as in~\eqref{eq:catorbit}.  Interpreting this categorical distribution as an approximation to the restriction of $\mu$ to the line $L(\theta,\rho)$  conceptually connects NURS to Hit-and-Run \cite{Diaconis2007HitandRun,ascolani2024,BoEbOb2024}, which selects its next state from the exact restriction to the line.  Taking the entire line into consideration when selecting its next state enables Hit-and-Run to move globally on the line.

In practice, this infinite-orbit algorithm is not computable, necessitating a restriction to finite lattices via a suitable stopping condition.  The connection to Hit-and-Run inspires the \emph{No-Underrun condition} geared towards keeping the orbit doubling going until the orbit spans most of $\mu$'s restriction to the line.  This ensures that the categorical state selection in NURS closely approximates the exact selection in Hit-and-Run.  

In sum, NURS represents a novel synthesis of ideas from NUTS and Hit-and-Run. It leverages a gradient-free variant of the locally adaptive, orbit-based exploration of NUTS while retaining Hit-and-Run’s ability to make global moves along lines. In this sense, NURS provides a practical implementation of Hit-and-Run.

\paragraph*{Contributions} 

This work introduces NURS, a locally-adaptive, gradient-free MCMC method for sampling from absolutely continuous target distributions. In each iteration, NURS selects an update direction from the unit sphere, thus avoiding artifacts introduced by fixed coordinate directions. The algorithm employs orbit-based exploration, and incorporates a locally adaptive orbit length, where a stopping criterion adjusts the orbit length based on the local scale of the target distribution. To guide state selection, NURS leverages exact evaluations of the target density along the orbit via categorical sampling.

A key theoretical contribution of this work is the interpretation of NURS as a practical approximation of the Hit-and-Run algorithm, a coordinate-free Gibbs sampler that is theoretically appealing but often computationally impractical.  Furthermore, a link between the initial random shift in NURS and Random Walk Metropolis, is established. These connections are not just of theoretical interest---they also inform principled tuning guidelines for the two hyperparameters of NURS: the relationship to Hit-and-Run informs the selection of the threshold parameter used in the No-Underrun condition, while the link to Random Walk Metropolis guides the choice of the lattice spacing.

On the theoretical side, we establish Wasserstein contraction of NURS in Gaussian targets using a carefully designed coupling, extending similar results for Hit-and-Run. This analysis reveals that NURS inherits Hit-and-Run's potential for fast convergence.  We also derive an explicit upper bound on the total variation distance between the transition kernels of NURS and Hit-and-Run, rigorously quantifying their overlap.  This result formally characterizes the extent to which NURS approximates Hit-and-Run by comparing its transition kernel to a piecewise uniform approximation of Hit-and-Run's state selection.

Finally, we conduct a study of NURS in Neal's funnel distribution, a  multi-scale example from  Bayesian hierarchical inference that poses substantial challenges for MCMC methods. To gain theoretical insight, we approximate the local geometry of the funnel at its most and least restrictive scales using Gaussian models. While the funnel does not have a strictly defined smallest scale, the relevant smallest scale can still be meaningfully characterized by focusing on a sufficiently large region where most of the probability mass is concentrated.  Numerical experiments support these findings, demonstrating that a well-tuned NURS can effectively sample from Neal's funnel. 

By providing an implementable Hit-and-Run amenable to theoretical analysis and demonstrating practical effectiveness in a challenging multi-scale target, NURS offers a significant contribution in gradient-free MCMC methods. Our results suggest that NURS is a promising alternative to existing methods, combining strong theoretical foundations with practical applicability in a way that has been largely unexplored.

\paragraph*{Organization of paper}  Section~\ref{sec:nurs} provides a detailed description of NURS.  In Section~\ref{sec:tuning_nurs}, we present quantitative tuning guidelines and connect NURS to both Hit-and-Run and Random Walk Metropolis.   Section~\ref{sec:contr} establishes Wasserstein contraction for NURS in the Gaussian setting (Theorem~\ref{thm:HRcontr}).   Section~\ref{sec:hit_and_run} formalizes the connection between NURS and Hit-and-Run (Theorem~\ref{thm:overlap}). Section~\ref{sec:funnel} theoretically and numerically analyzes NURS in the context of Neal's funnel.  Finally, Section~\ref{sec:reversibility} proves the reversibility of NURS (Theorem~\ref{thm:NURS_reversibility}).   
The paper concludes with a discussion and outlook section (Section~\ref{sec:outlook}), and includes a self-contained pseudocode implementation of NURS (Appendix~\ref{sec:pseudocode}).

\begin{figure}
\centering
\includegraphics[width=\textwidth]{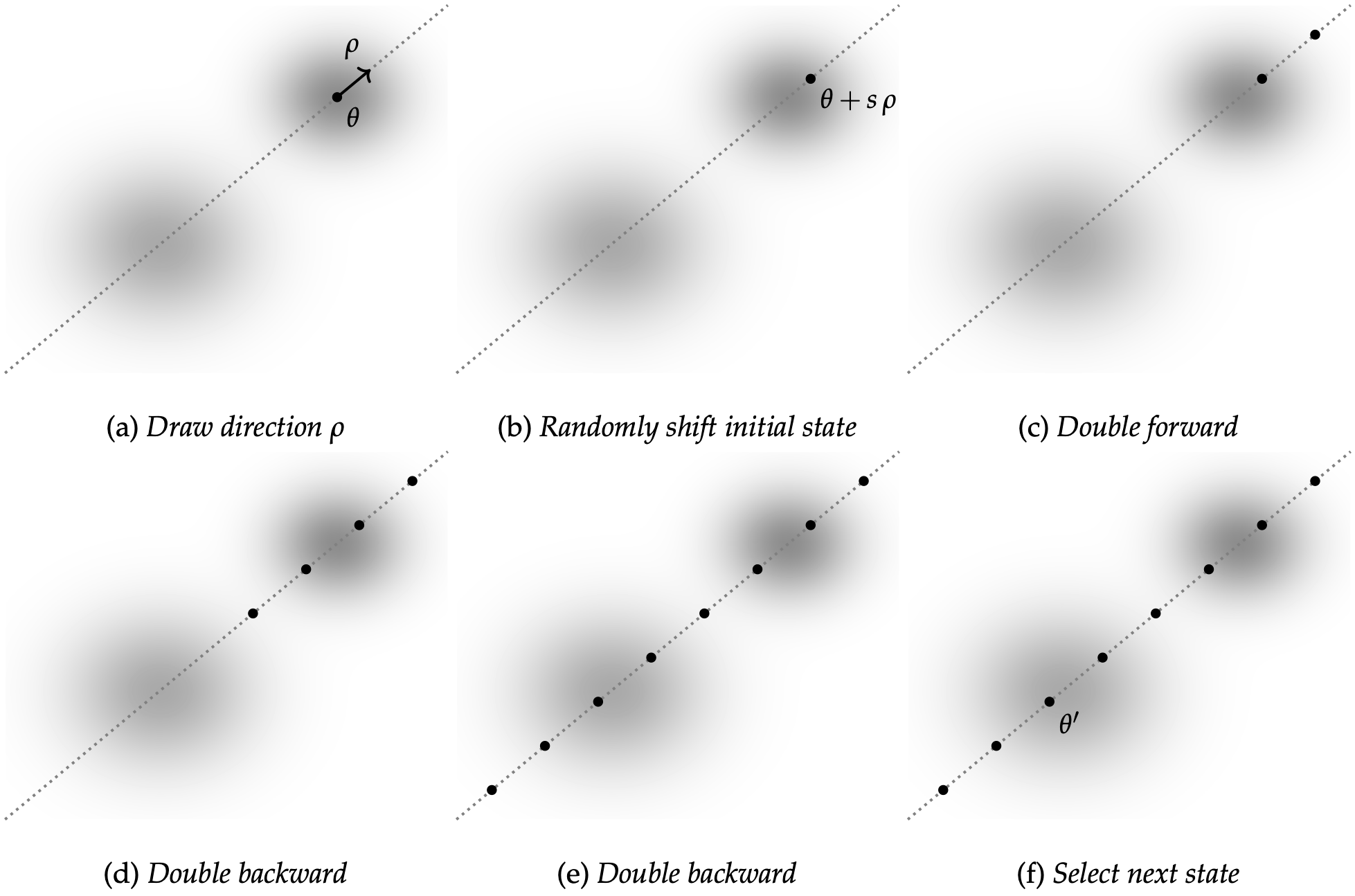}
\caption{\textit{A transition of NURS from an initial state $\theta\in\mathbb R^2$ in the target shown in Figure \ref{fig:slice_sampler}.  NURS first samples a direction $\rho$ from the unit sphere, defining the line along which the orbit will be built.  The initial state is then randomized by applying a Metropolis-adjusted random shift $s\in[-h/2,\,h/2)$ along the line.  Starting from the shifted state, NURS iteratively builds an orbit: a lattice with spacing $h$ on the line. The orbit is doubled at each step either forward or backward with equal probability.  The doubling continues until the No-Underrun condition is met, indicating the orbit spans most of the target restricted to the line.  From the selected orbit, the next state $\theta'$ of NURS is sampled according to the categorical distribution \eqref{eq:catorbit}, with probabilities proportional to the target's density evaluated at the orbit points.}} \label{fig:slice_and_run}
\end{figure}

\section{The No-Underrun Sampler}\label{sec:nurs}

In this section, we provide a detailed description of the No-Underrun Sampler (NURS).
Let $\mu$ denote an absolutely continuous target probability measure on $\mathbb R^d$, with its non-normalized density also denoted by $\mu$.
Starting from the current state $\theta\in\mathbb R^d$, NURS first samples a direction $\rho\sim\tau$ from a probability distribution $\tau$ on the unit sphere $\mathbb S^{d-1}$. Importantly,  the selection of the update direction $\rho$ is coordinate-free, akin to Hit-and-Run.  This direction defines the line through $\theta$ along which the transition is restricted.  

Next, starting from a randomized initial position on the line, NURS generates an orbit in the form of a lattice with spacing $h>0$ on the line.  This orbit is generated iteratively using a doubling procedure, which terminates when a \emph{No-Underrun condition} is met. This condition is designed to ensure that the orbit adequately represents the target distribution restricted to the line.   Once the orbit is selected, the next state is chosen using categorical state selection \eqref{eq:catorbit}, where each point on the orbit is weighted according to its density.

A transition of NURS from current state $\theta \in \mathbb{R}^d$ to next state $\theta' \in \mathbb{R}^d$ is defined in Algorithm~\ref{alg:nurs}. A detailed definition of the orbit selection step follows the algorithm, and Section~\ref{sec:tuning_nurs} discusses the tuning of NURS.  

\begin{algorithm}[ht]
\caption{{\bfseries No-Underrun Sampler}: $\ \theta' \sim\pi_{\NURS}(\theta, \cdot)$}
\begin{description}
\item[Inputs:] $\theta \in \mathbb{R}^d$ (current state); \ \ $h > 0$ (lattice size); \ \ $\epsilon  \ge 0$ (density threshold); \ \ $M \in \mathbb{N}$ (maximum doublings);  \ \ $\tau$ (distribution on the unit sphere $\mathbb{S}^{d-1}$); \ \ $\mu$ (non-normalized target density on $\mathbb{R}^d$)
\vspace*{2pt}
\hrule
\vspace*{6pt}
\item[Hit Step:] Sample direction $\rho \sim \tau$.
\item[Run Step:]\phantom{k}
\begin{itemize}
\item[1.]
Sample and Metropolis-adjust shift:
\begin{itemize}
    \item $s' \sim\Unif([-h/2,\,h/2))$.
    \item $A \sim \operatorname{Bernoulli}\Bigr(1 \wedge \frac{\mu(\theta+s'\rho)}{\mu(\theta)}\Bigr)$.
    \item $s = A \; s'$.
\end{itemize}
\item[2.] Sample orbit $\mathcal{O} \sim \mathrm{Orbit}(\theta+s\rho, \rho, h, \epsilon, M)$.
\item[3.] Sample and return state $\theta' \sim \cat\!\left(\mathcal{O}, \mu \right)$.
\end{itemize}
\end{description}
\label{alg:nurs}
\end{algorithm}

\begin{theorem} \label{thm:NURS_reversibility}
The transition kernel $\pi_{\NURS}$ is reversible with respect to the target $\mu$. 
\end{theorem}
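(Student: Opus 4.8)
The plan is to follow the standard route for proving reversibility of orbit-based samplers—the one used for the No-U-Turn sampler—adapted here to constant-velocity orbits, the No-Underrun stopping rule, and the extra Metropolis-adjusted shift. First I would condition on the direction $\rho$. Since $\rho\sim\tau$ is drawn independently of the current state, since the Run step moves $\theta$ only along the line $L(\theta,\rho)=\theta+\mathbb R\rho$, and since the orbit is two-sided—so that the reverse transition from $\theta'$ to $\theta$ can be realised with the \emph{same} direction $\rho$—it suffices to prove that for each fixed $\rho$ the Run step satisfies detailed balance with respect to $\mu$. Writing $\theta=y+t\rho$ with $y\perp\rho$, so that $\mathrm d\theta=\mathrm dy\,\mathrm dt$, disintegrates $\mu$ over the pencil of lines parallel to $\rho$, the conditional law on a line having one-dimensional density $t\mapsto\mu(y+t\rho)=:f(t)$; since the Run step preserves the line, this reduces the claim to a purely one-dimensional statement: the induced transition kernel along a generic line is reversible with respect to $f$. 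Reassembling this one-dimensional identity over the disintegration and then over $\tau$ recovers the full theorem at the end; that last step is routine.

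\textbf{The orbit-selection step.}
The heart of the matter is the orbit construction. The key lemma—the analogue of the U-turn reversibility lemma—is that the doubling procedure produces any given admissible orbit $\mathcal O$ (a finite window of the spacing-$h$ lattice) with a probability that does not depend on which point of $\mathcal O$ the construction is initialised from. For this one needs the stopping rule—the No-Underrun condition together with the cap $M$ on the number of doublings—to be \emph{orbit-intrinsic}: the terminal orbit $\mathcal O$, and the event that the doubling does not halt before $\mathcal O$ is reached, must be functions of $\mathcal O$ as a set, not of the starting point. As for NUTS, one secures this by checking the No-Underrun condition recursively over the hierarchy of sub-orbits produced during doubling, so that the only remaining freedom is the $\log_2|\mathcal O|$ fair coin flips, which contribute the same factor $|\mathcal O|^{-1}$ from every point of $\mathcal O$. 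Granting this lemma, and using that the categorical weights $\breve\theta\mapsto\mu(\breve\theta)$ depend only on the orbit as a set, detailed balance for the orbit-selection step—with respect to $\mu$ restricted to the relevant lattice—follows by summing over admissible orbits that contain both the source and the target point, the normalisers $\sum_{\breve\theta\in\mathcal O}\mu(\breve\theta)$ cancelling symmetrically.

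\textbf{Incorporating the shift, and the main obstacle.}
Step~1 of the Run step is exactly a Random Walk Metropolis move with the symmetric uniform proposal $\theta\mapsto\theta+s'\rho$, $s'\sim\mathrm{Unif}([-h/2,h/2))$, hence reversible with respect to $f$ on its own; but a composition of $\mu$-reversible kernels need not be $\mu$-reversible, so the two pieces cannot simply be chained. Instead I would prove detailed balance for the entire one-dimensional Run step directly, by pairing each way of going from $\theta$ to $\theta'$—a triple consisting of the accepted shift $s$, the sequence of doubling choices, and the categorical draw—with the corresponding way of going from $\theta'$ to $\theta$ (shift $-s$, the mirrored doubling sequence on the reverse orbit, and the matching categorical draw), and checking that the product of transition densities times the target value is invariant under this pairing, with the Metropolis acceptance ratio $1\wedge\mu(\theta+s'\rho)/\mu(\theta)$ accounting for the remaining mismatch between the forward and reverse configurations. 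Verifying that this pairing is measure-preserving—that the acceptance, doubling, and normalising factors all cancel—is the step I expect to be the main obstacle, both because the forward and reverse orbits live on different, phase-shifted lattices whose No-Underrun stopping behaviour must be reconciled, and because the continuous shift and the discrete lattice structure have to be tracked simultaneously; establishing the orbit-intrinsic form of the No-Underrun rule used in the previous paragraph is the other delicate point. Once the one-dimensional identity is in hand, integrating it back over the line disintegration and over $\rho\sim\tau$ yields reversibility of $\pi_{\mathrm{NURS}}$ with respect to $\mu$.
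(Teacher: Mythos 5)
Your reduction to one dimension, the orbit-intrinsic key lemma (the doubling rule assigns the same probability to a given orbit from every starting point inside it), and the cancellation of the categorical normalisers for transitions within a single orbit are all correct, and they correspond precisely to what the paper does in Section~\ref{sec:reversibility} via the involution $\Psi(\theta,\rho,\ell,R,i)=(\theta+\rho h i,\,\rho,\,\ell,\,R-i,\,-i)$ and the orbit-probability symmetry $p_{\mathrm{Orbit}}(\ell,R\mid\theta,\rho)=p_{\mathrm{Orbit}}(\ell,R-i\mid\theta+\rho h i,\rho)$. The genuine gap is exactly where you place it, and your proposed pairing does not close it. Pairing the forward shift $s$ with the reverse shift $-s$, the forward orbit lies on the lattice $\theta+s\rho+h\mathbb Z\rho$ (which contains $\theta'$), while the reverse orbit lies on $\theta'-s\rho+h\mathbb Z\rho$ (which contains $\theta$); whenever $s\notin h\mathbb Z$ these are distinct lattices, so the two normalisers $\sum_{\breve\theta\in\mathcal O}\mu(\breve\theta)$ are sums over different point sets and do not cancel, and the Metropolis factor $1\wedge\mu(\theta+s\rho)/\mu(\theta)$ does not absorb the discrepancy. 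A short computation with a piecewise-constant density in one dimension and infinite orbits (so that both normalisers are full lattice sums) shows the two sides of detailed balance genuinely differing. So the step you flag as the main obstacle is where the argument stops, and the proposal is not yet a proof.

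It is also worth noting that your scepticism about chaining two $\mu$-reversible kernels is directly relevant to the paper's own Section~\ref{sec:reversibility}: the involution argument there establishes reversibility only of Steps~2--3 of the Run Step (with the shifted point $\theta+s\rho$ playing the role of $\theta$ in $p_{\mathrm{joint}}$), and the preceding shift is disposed of in one sentence as ``reversible by design.'' For $\mu$-reversible kernels $S$ (shift) and $K$ (orbit) one has $\mu(\rd\theta)(SK)(\theta,\rd\theta')=\mu(\rd\theta')(KS)(\theta',\rd\theta)$, so $SK$ is $\mu$-reversible only if $S$ and $K$ commute, which they do not here. What the two-step decomposition \emph{does} cleanly deliver is $\mu$-invariance of $\pi_\NURS$, since a composition of $\mu$-invariant kernels is $\mu$-invariant, and that is what matters for sampling correctness. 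To obtain reversibility along either your route or the paper's, the shift $s$ would have to be carried as an additional auxiliary coordinate inside the involution, and one would need a genuine symmetry of the resulting joint law under the map $(\theta,\rho,s,\ell,R,i)\mapsto(\theta+(s+hi)\rho,\rho,-s,\ell,R-i,-i)$, including the shift density and the orbit normalisers on the two phase-shifted lattices; the $(s,-s)$ pairing alone does not supply it.
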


Theorem~\ref{thm:NURS_reversibility} is proved in Section~\ref{sec:reversibility}.

\subsection{Orbit Selection \texorpdfstring{$\mathcal{O} \sim \mathrm{Orbit}(\theta, \rho, h, \epsilon, M)$}{Orbit}}

The orbit selection procedure generates an orbit from which the next state is selected.  The current state is $\theta \in \mathbb{R}^d$, the direction of evolution is $\rho \in \mathbb{S}^{d-1}$, the lattice spacing between points on the orbit is $h > 0$, the density threshold determining when to stop evolution of the orbit is $\epsilon \ge 0,$ and $M \in \mathbb{N}$ is the maximum number of doublings allowed.  

\paragraph*{Elements of orbit construction} Starting from $\theta \in \mathbb{R}^d$, the orbit selection procedure generates a sequence of consecutive states, referred to as an \emph{orbit}:
\[ \calO\ =\ (\theta^{\leftmost}, \dots, \theta^{\rightmost})\ \subset\ \theta+h\,\mathbb Z\,\rho \]
where the orbit lies on the $h$-lattice $\theta + h \, \mathbb{Z} \, \rho$ in the direction $\rho \in \mathbb{R}^d$, starting from $\theta$.  As in Neal's hybrid slice sampler and NUTS, orbits are constructed by repeated doubling.
To double an orbit $\mathcal O$, we first select a direction forward or backward uniformly.  If we are moving forward, the extension is 
$\calO^{\ext} = \mathcal{O} + h\, \left| \mathcal{O} \right| \, \rho$, 
and if backward, 
$\calO^{\ext} = \mathcal{O} - h\, \left| \mathcal{O} \right|\, \rho$.  Here, arithmetic is performed componentwise and $|\calO|$ denotes the number of states in orbit $\calO$.   If we move forward, the doubled orbit is $\mathcal{O} \odot \mathcal{O}^\ext$, and if backward, it is $\mathcal{O}^\ext \odot \mathcal{O}$, where for two orbits $\calO_1=(\theta^{\leftmost}_1, \dots, \theta^{\rightmost}_1)$ and $\calO_2=(\theta^{\leftmost}_2, \dots, \theta^{\rightmost}_2)$, concatenation is defined as
\[ \calO_1 \odot  \calO_2\ =\ (\theta^{\leftmost}_1, \dots, \theta^{\rightmost}_1, \theta^{\leftmost}_2, \dots, \theta^{\rightmost}_2)\;. \]
This doubling procedure continues until one of the following criteria is satisfied: a stopping condition holds, a sub-stopping condition holds, or a predefined maximum number of doublings ($M$) is reached.  The definitions of the stopping and sub-stopping conditions depend on the density threshold $\epsilon$ and are provided next, followed by a detailed description of the orbit selection procedure.

\paragraph*{No-Underrun stopping condition}

Given a lattice spacing $h>0$ and density threshold $\epsilon \ge 0$, an orbit 
$\mathcal{O} = (\theta^{\leftmost}, \dots, \theta^{\rightmost})$ satisfies the No-Underrun \emph{stopping condition} if
\begin{equation}\label{eq:no-Underrun}
\max\bigr(\mu(\theta^{\leftmost}),\,\mu(\theta^{\rightmost}) \bigr) \ \leq \ \epsilon \;h \, \sum_{\breve{\theta} \in \calO} \mu(\breve{\theta})  \;. 
\end{equation}  This condition is scale invariant, i.e., it is unchanged if the target density is rescaled by a constant factor. Since the iterative doubling starts with the single-state orbit $(\theta)$, all resulting orbits satisfy $|\mathcal O| \in2^{\mathbb N}$.  For such orbits, we define the following collection of all sub-orbits obtainable by repeated halving:
\begin{equation}\label{eq:sub-orbits}
 \Bigr\{ \calO_{i,j} \ :\ i \in \bigr\{ 0, 1, \dots, \log_2 |\mathcal O| \bigr\}, \, j \in \bigr\{ 1, 2, \dots, 2^i \bigr\}  \Bigr\}
\end{equation}
where, for each $i \in \{ 0, 1, \dots, \log_2 |\mathcal O| \}$, $\mathcal O_{i,j}$ are defined to be the unique orbits of size $|\mathcal O| 2^{-i}$ such that
\[ \mathcal O\ =\ \mathcal O_{i,1}\odot\mathcal O_{i,2}\odot\cdots\odot\mathcal O_{i,2^i} \;. \]
Finally, an orbit $\mathcal O$ is said to satisfy the \emph{sub-stopping condition} if any of its sub-orbits as defined in \eqref{eq:sub-orbits} satisfies the No-Underrun stopping condition.

\paragraph*{Orbit selection procedure}

Given a state $\theta\in\mathbb R^d$, a direction $\rho\in\mathbb S^{d-1}$, a density threshold $\epsilon \ge 0$, a lattice spacing $h>0$,  and a maximum number of doublings $M \in \mathbb{N}$, the orbit selection procedure in NURS proceeds iteratively as follows, starting with the initial orbit $\calO_0=(\theta)$.  A symmetric Bernoulli process $B \sim \Unif\bigr(\{0,1\}^M\bigr)$ is sampled to determine the direction of each doubling step.  After $k$ doublings ($k \in \{0, \dots, M-1\})$, 
\begin{itemize}
\item For the current orbit $\calO_k$, draw an extension $\calO_k^{\ext} = \begin{cases} \mathcal O_k+h\,|\mathcal O_k|\,\rho & \text{if $B_k=1$,} \\
\mathcal O_k-h\,|\mathcal O_k|\,\rho & \text{if $B_k=0$.} \end{cases}$  
\item If  $\calO_k^{\ext}$ satisfies the sub-stopping condition, select $\calO_k$ as final orbit.  
\item If $\calO_k^{\ext}$ does not satisfy the sub-stopping condition, set $\calO_{k+1}= \begin{cases}  \calO_k\odot \calO_k^{\ext} & \text{if $B_k=1$,} \\ 
\calO_k^{\ext} \odot \calO_k   & \text{if $B_k=0$.} 
\end{cases}$
\item If $\calO_{k+1}$ satisfies the stopping condition or $k+1=M$, select $\calO_{k+1}$ as final orbit.
\item Otherwise, repeat with $\calO_{k+1}$ as the current orbit.
\end{itemize}
This doubling procedure generates a final orbit $\mathcal{O}$ whose size is a power of $2$, i.e., \[
|\calO|=2^\ell \;,  \quad \text{for some} \quad  \ell\in\mathbb N \;, \quad \ell \le M \;.
\]

\subsection{Implementation of NURS}

NURS is fully specified by a threshold $\epsilon \ge 0$, a lattice spacing $h>0$, and a maximum number of doublings $M \in \mathbb{N}$. Tuning guidelines for $\epsilon$ and $h$ are given in Sections~\ref{sec:no-underrun} and~\ref{sec:RWM}. 

Like NUTS \cite{HoGe2014,betancourt2017conceptual}, NURS can be coded so that it does not need to store all $2^M$ states.  Instead of storing all states, the log sum of probabilities is accumulated along with a chosen state.  These are initialized at the initial state and initial log density.  Then as each state is visited, the log sum of probabilities is updated and a probabilistic choice is made as to whether to update the state to the new state.  These are stored along with endpoints of sub-doublings, leading to at most a logarithmic storage cost at the price of an increased number of random number generations. Complete pseudocode for NURS is provided in Appendix~\ref{sec:pseudocode}.

Unlike NUTS, the states within an orbit in NURS are equally spaced along a line, enabling parallel computation for evaluating their log densities. This parallelization makes NURS particularly efficient on modern computing platforms, where we can evaluate the log densities at thousands of states in a single batch, and sample the next state accordingly. Although MCMC is inherently a sequential algorithm, NURS can leverage parallel computation to significantly accelerate runtime.

\section{Quantitative Tuning Guidelines for NURS} 
\label{sec:tuning_nurs}

In this part, we provide  guidelines for tuning the threshold $\epsilon \ge 0$ used in the No-Underrun stopping condition \eqref{eq:no-Underrun} as well as the lattice spacing $h>0$.  These insights are derived by connecting NURS to two other MCMC methods: Hit-and-Run, which is generally not implementable, and Random Walk Metropolis.  By analyzing these connections, we gain insight into the behavior of NURS and derive principled strategies for selecting $\epsilon$ and $h$.

\subsection{Tuning the density threshold \texorpdfstring{$\epsilon$}{epsilon}}\label{sec:no-underrun}

Tuning guidelines for the threshold $\epsilon$ arise from interpreting NURS as an implementable variant of Hit-and-Run. To establish this connection, we first introduce Hit-and-Run and then discretize it to naturally derive an infinite-orbit variant of NURS. The goal of equipping NURS with the key properties of Hit-and-Run motivates the No-Underrun condition, clarifies the correct tuning of $epsilon$, and highlights its implications for the orbit selection procedure.

\paragraph*{Hit-and-Run}
Define the line through $\theta\in \mathbb{R}^d$ in the direction $\rho \in \mathbb S^{d-1}$ as
\[ L(\theta,\rho) = \{  \theta + t \, \rho \mid t \in \mathbb{R} \} \;. \]
It corresponds to the image of the displacement map $\Delta_{\theta,\,\rho}:\mathbb R\to\mathbb R^d$, $\Delta_{\theta,\,\rho}(t)=\theta+t\rho$ which maps a scalar displacement $t$ to the corresponding point on the line.
Once a parametrization of the line is fixed, a regular version of the conditional distribution of $\mu$ given $L(\theta,\rho)$ is the push forward
\[ \mu\bigr(\,\cdot\,\bigr|L(\theta,\rho)\bigr)\ =\ \mu_{\theta,\,\rho}\circ\Delta_{\theta,\,\rho}^{-1} \]
of the one-dimensional displacement measure $\mu_{\theta,\,\rho}$ on $\mathbb R$ defined via its almost everywhere finite density
\begin{equation}\label{eq:sdm}
    \mu_{\theta,\,\rho}(t)\ \propto\ \mu(\theta+t\rho) \;.
\end{equation}

A transition of generalized Hit-and-Run with direction distribution $\tau$ is given in Algorithm~\ref{alg:gH&R}.
Note that this method is generally not implementable because it requires an exact sample from the scalar displacement measure.

\begin{algorithm}[ht]
\caption{Generalized Hit-and-Run: $\ \theta'\sim\pi_{\gHR}(\theta,\cdot)$}\label{algo:HR}
\begin{description}
\item[Inputs:] $\theta \in \mathbb{R}^d$ (current state);   \ \ $\tau$ (direction distribution on the unit sphere $\mathbb{S}^{d-1}$);  \ \ $\mu_{\theta, \rho}$ (displacement measure on $\mathbb{R}$)
\vspace*{2pt}
\hrule
\vspace*{6pt}
\item[Hit Step:] Sample direction $\rho \sim \tau$.
\item[Run Step:] Sample scalar displacement $T\sim\mu_{\theta,\,\rho}$ and return $\theta' = \theta + T \, \rho$.
\end{description}
\label{alg:gH&R}
\end{algorithm}

This is illustrated in Figure \ref{fig:HR_transition_step}.
The original Hit-and-Run method uses the uniform distribution on the unit sphere in the Hit Step while the generalization to arbitrary distributions $\tau$ yields more flexibility that may be used to, e.g., precondition the direction distribution either locally or globally.

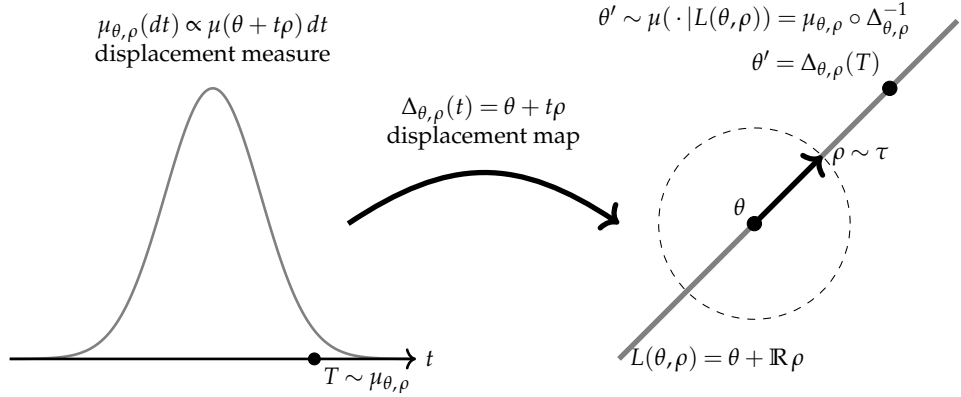
\begin{figure}[t]
\centering
\begin{tikzpicture}[scale=0.9]

\clip (0,0.5) rectangle (16,7);

\draw [gray,line width=1pt,smooth,samples=100,domain=0:6] plot({\x+1},{4*e^(-(\x-3)^2)+1});
\draw[line width=1pt,->] (1,1) -- (7,1) node[at end, right]{$t$};
\node[align=center] at (4,5.7) {$\mu_{\theta,\,\rho}(dt)\propto\mu(\theta+t\rho)\,dt$\\displacement measure};
\filldraw (5.5,1) circle (2.5pt) node[below right]{$T\sim\mu_{\theta,\,\rho}$};

\draw[dashed] (12,3) circle ({sqrt(2)});
\draw[gray, line width=2pt] (10,1) -- (15,6) node[black,at start, right] {$L(\theta,\rho)=\theta+\mathbb R\,\rho$};
\filldraw (12,3) circle (3pt) node[above left]{$\theta$};
\draw[line width=2pt, ->] (12, 3) -- (13, 4) node[pos=1, right] {$\rho\sim\tau$};
\filldraw (14,5) circle (3pt) node[above left]{$\theta'=\Delta_{\theta,\,\rho}(T)$};
\node[align=center] at (12,6) {$\theta'\sim\mu(\,\cdot\,|L(\theta,\rho))=\mu_{\theta,\,\rho}\circ\Delta_{\theta,\,\rho}^{-1}$};

\draw[line width=2pt, ->] (6,3) .. controls (7.5,4) and (8.5,4) .. (10,3);
\node[align=center] at (8,4.5) {$\Delta_{\theta,\,\rho}(t)=\theta+t\rho$\\displacement map};

\end{tikzpicture}

\caption{\textit{
Transition step of generalized Hit-and-Run from state $\theta$.
}}
\label{fig:HR_transition_step}
\end{figure}

\paragraph*{Infinite-orbit NURS} To turn Hit-and-Run into an implementable algorithm, the exact sampling from the scalar displacement measure needs to be replaced by computationally feasible steps.  By discretizing the line $L(\theta,\rho)$ into an infinite lattice and selecting the subsequent state from a categorical approximation of the scalar displacement measure over this lattice, we obtain a discretization of generalized Hit-and-Run.  Let $h>0$ be the spacing of the lattice. A transition of this method, referred to as \emph{uniform-shifted infinite-orbit NURS}, is given in Algorithm~\ref{alg:infty_unadjusted}.

\begin{algorithm}[ht]
\caption{Uniform-shifted infinite-orbit NURS: $\ \theta'\sim\pi_{\infty,u}(\theta,\cdot)$}
\begin{description}
\item[Inputs:] $\theta \in \mathbb{R}^d$ (current state); \ \ $h > 0$ (lattice size);   \ \ $\tau$ (direction distribution on the unit sphere $\mathbb{S}^{d-1}$);  \ \ $\mu_{\theta, \rho}$ (displacement density on $\mathbb{R}$)
\vspace*{2pt}
\hrule
\vspace*{6pt}
\item[Hit Step:] Sample direction $\rho \sim \tau$.
\item[Run Step:]\phantom{k}
\begin{itemize}
\item[1.] Sample a random shift $s\sim\Unif([-h/2,\,h/2))$.
\item[2.] Sample a scalar displacement $T\sim \cat\bigr(h \mathbb{Z}, \mu_{\theta+s\rho,\,\rho}\bigr)$ and return $\theta' = \theta + (s+T) \, \rho$.
\end{itemize}
\end{description}
\label{alg:infty_unadjusted}
\end{algorithm}

Due to the shift $s$, the algorithm uniformly selects a lattice on $L(\theta, \rho)$ with fixed spacing $h$.  However, this uniform shift breaks reversibility of the Markov chain.  To restore reversibility, we add a Metropolis filter, which yields \emph{infinite-orbit NURS} given in Algorithm~\ref{alg:infty}.

\begin{algorithm}[ht]
\caption{Infinite-orbit NURS: $\ \theta'\sim\pi_\infty(\theta,\cdot)$}
\begin{description}
\item[Inputs:] $\theta \in \mathbb{R}^d$ (current state); \ \ $h > 0$ (lattice size);   \ \ $\tau$ (direction distribution on the unit sphere $\mathbb{S}^{d-1}$);  \ \ $\mu_{\theta, \rho}$ (displacement density on $\mathbb{R}$)
\vspace*{2pt}
\hrule
\vspace*{6pt}
\item[Hit Step:] Sample direction $\rho \sim \tau$.
\item[Run Step:]\phantom{k}
\begin{itemize}
\item[1.]
Sample and Metropolis-adjust shift:
\begin{itemize}
    \item $s' \sim\Unif([-h/2,\,h/2))$.
    \item $A \sim \operatorname{Bernoulli}\Bigr(1 \wedge \frac{\mu_{\theta,\,\rho}(s')}{\mu_{\theta,\,\rho}(0)}\Bigr)$.
    \item $s = A \; s'$.
\end{itemize}
\item[2.] Sample scalar displacement $T\sim \cat\bigr(h \mathbb Z, \mu_{\theta+s\rho,\,\rho}\bigr)$ and return $\theta' = \theta + (s+T) \, \rho$.
\end{itemize}
\end{description}
\label{alg:infty}
\end{algorithm}

It is worth emphasizing that, unlike many other sampling methods involving Metropolis filters, such as Random Walk Metropolis (Algorithm~\ref{alg:RWM}), Infinite-orbit NURS does not necessarily remain in its initial state in case of rejection.  Instead, it proceeds with the categorical state selection on an unshifted lattice.

Infinite-orbit NURS is obtained from NURS (Algorithm~\ref{alg:nurs}) by taking a zero threshold $\epsilon=0$ and allowing an infinite number of doublings $M=\infty$.  Indeed, then the orbit selection of Algorithm~\ref{alg:nurs} produces
\[ \calO \ =\ \theta+(s+h\mathbb Z)\,\rho \]
from which the subsequent state is selected according to
\[ \theta'\ \sim\ \cat\bigr(\calO, \mu\bigr)\;. \]
This corresponds precisely to step 2 of the Run Step in Algorithm~\ref{alg:infty}.

\paragraph*{The No-Underrun condition}  A key feature of Hit-and-Run is its remarkable ability to move to any point along a line passing through its current state by sampling from the target distribution restricted to that line. In flat regions, such as the mouth of Neal's funnel (see Section \ref{sec:funnel}), this property enables Hit-and-Run to make large moves, resulting in fast mixing. Infinite-orbit NURS, introduced in the previous section, shares this desirable property: Its orbit spans the entire line and, up to a lattice discretization, accounts for the entire mass of the target distribution on the line. However, the infinite length of such orbits makes this approach impractical to implement.  

To address this limitation, NURS employs a stopping condition specifically designed to preserve the essential characteristic of its infinite-orbit limit while remaining computationally feasible.
The stopping condition ensures that a finite-length orbit is selected that contains the bulk of the distribution along the line.
This motivates the \emph{No-Underrun condition}, defined  in \eqref{eq:no-Underrun}.  

\paragraph*{Threshold tuning}

To better understand \eqref{eq:no-Underrun} and how to properly tune the threshold $\epsilon \ge 0 $, consider a continuous orbit corresponding to a segment of the line $L(\theta,\rho)$.  The No-Underrun condition for this continuous orbit can be expressed in terms of $\mu_{\theta,\,\rho}$, the restriction of the target measure $\mu$ to the line. Let $a,b\in\mathbb R$ represent the scalar displacements corresponding to the endpoints of the continuous orbit, i.e., $\Delta_{\theta,\,\rho}(a)=\theta^\leftmost$ and $\Delta_{\theta,\,\rho}(b)=\theta^\rightmost$.
Then condition \eqref{eq:no-Underrun} approximates
the following continuous version \begin{equation}\label{eq:no-Underrun_cont}
\max\bigr(\mu_{\theta,\,\rho}(a),\,\mu_{\theta,\,\rho}(b) \bigr) \ \leq \ \epsilon \;\mu_{\theta,\,\rho}\bigr((a,b)\bigr) 
\end{equation}
in the sense that the right hand side of \eqref{eq:no-Underrun} is a Riemann sum approximation of the right hand side of \eqref{eq:no-Underrun_cont}.

Now suppose that $\mu_{\theta,\,\rho}$ has exponential tails and $\epsilon$ is tuned such that barriers exceed the threshold $\epsilon$  (see Figure \ref{fig:nounderrun}).
Under these assumptions, \eqref{eq:no-Underrun_cont} is satisfied only for intervals $(a,b)$ that contain the bulk of the distribution, while excluding tail regions encompassing total probability mass of order $\epsilon$.  Condition \eqref{eq:no-Underrun_cont} is therefore well suited for determining whether a finite-length orbit contains the bulk of the distribution.  For optimal results, it is recommended to set $\epsilon$ significantly smaller than expected barriers.

\begin{figure}[ht]
\centering
\begin{tikzpicture}[scale=1]
\clip (-2,-.5) rectangle (12,3);
\node[right] at ({8},{3*(e^(-(8-3)^2/2)+e^(-(8-7)^2/2)}) {$\mu_{\theta,\,\rho}$};
\fill [NavyBlue!40,samples=100,domain=1.11:8.89] plot ({\x}, {3*(e^(-(\x-3)^2/2)+e^(-(\x-7)^2/2)}) |- (1.11,0);
\fill [gray!40,samples=100,domain=0:1.11] plot ({\x}, {3*(e^(-(\x-3)^2/2)+e^(-(\x-7)^2/2)}) |- (0,0);
\fill [gray!40,samples=100,domain=8.89:10] plot ({\x}, {3*(e^(-(\x-3)^2/2)+e^(-(\x-7)^2/2)}) |- (8.89,0);
\draw [dashed,smooth,samples=100,domain=0:10] plot ({\x},{3*(e^(-(\x-3)^2/2)+e^(-(\x-7)^2/2)});
\draw (0,0) -- (10,0);
\draw (0,.5) -- (10,.5) node[pos=0,left] {$\epsilon$};
\draw (1.11,0) -- (1.11,0.5);
\draw (8.89,0) -- (8.89,0.5);
\draw[line width=1pt] (1.11,0) -- (8.89,0);
\draw[line width=1pt] (1.11,-0.1) -- (1.11,0.1) node[pos=0,below] {$a$};
\draw[line width=1pt] (8.89,-0.1) -- (8.89,0.1) node[pos=0,below] {$b$};
\end{tikzpicture}
\caption{\textit{The No-Underrun condition \eqref{eq:no-Underrun_cont} ensures that for exponentially-tailed distributions with barriers exceeding $\epsilon$, the interval $(a,b)$ contains the bulk of the distribution, with the total probability mass outside $(a,b)$ being $O(\epsilon)$.}}
\label{fig:nounderrun}
\end{figure}
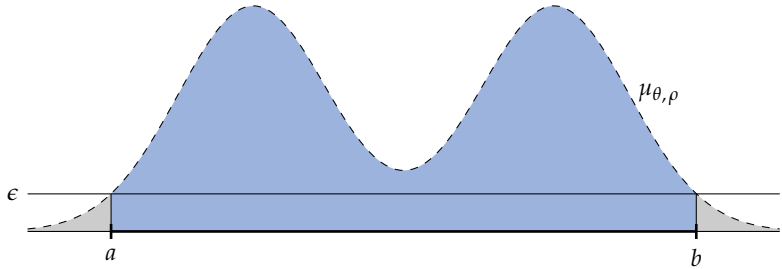

\paragraph*{Orbit selection with No-Underrun}

We now detail how the orbit selection with the No-Underrun stopping condition operates in the setting of exponentially-tailed distributions when the threshold $\epsilon$ is properly tuned.  Starting from a state $\theta$ within the bulk of the distribution restricted to the line (see Figure \ref{fig:nounderrun_OS} (a)), the procedure iteratively doubles the orbit until it contains the bulk up to the threshold $\epsilon$.
At each step, extensions $\mathcal O^{\ext}$ are considered, but none satisfy  the No-Underrun condition until the final finite-length orbit $\mathcal O$ contains the bulk of the distribution.

If the chain starts in the tails (see Figure \ref{fig:nounderrun_OS} (b)),  an extension $\mathcal O^{\ext}$ may satisfy the No-Underrun condition on its own.
In such cases, the procedure selects the orbit $\mathcal O$ prior to the proposed extension, which spans only a portion of the tails.
However, because the state selection weights decay exponentially fast away from the bulk, NURS makes a directed move towards the bulk with high probability.

\begin{figure}[ht]
\centering
\begin{minipage}{0.48\textwidth}
\centering
\begin{tikzpicture}[scale=.8]
\draw [dashed,smooth,samples=100,domain=0:8] plot ({\x},{3*e^(-(\x-4)^2/0.3)});
\node[right] at ({4.5},{3*e^(-(5-4)^2/3)}) {$\mu_{\theta,\,\rho}$};
\draw (0,0) -- (8,0);
\filldraw (3.7,0) circle (2pt) node[above] {$\theta$};

\draw (3.5,-0.25) -- (4,-0.25);
\draw (3.5,-0.2) -- (3.5,-0.3);
\draw (4,-0.2) -- (4,-0.3);
\filldraw ({3.5},{3*e^(-(3.5-4)^2/0.3)}) circle (1pt);
\draw[dotted] (4,-0.2) -- ({4},{3*e^(-(4-4)^2/0.3)});
\filldraw ({4},{3*e^(-(4-4)^2/0.3)}) circle (1pt);

\draw (3.5,-0.5) -- (4.5,-0.5);
\draw (3.5,-0.45) -- (3.5,-0.55);
\draw (4.5,-0.45) -- (4.5,-0.55);
\draw[dotted] (3.5,-0.5) -- ({3.5},{3*e^(-(3.5-4)^2/0.3)});
\filldraw ({3.5},{3*e^(-(3.5-4)^2/0.3)}) circle (1pt);
\draw[dotted] (4.5,-0.75) -- ({4.5},{3*e^(-(4.5-4)^2/0.3)});
\filldraw ({4.5},{3*e^(-(4.5-4)^2/0.3)}) circle (1pt);

\draw (2.5,-0.75) -- (4.5,-0.75);
\draw (2.5,-0.7) -- (2.5,-0.8);
\draw (4.5,-0.7) -- (4.5,-0.8);
\draw[dotted] (2.5,-1) -- ({2.5},{3*e^(-(2.5-4)^2/0.3)});
\filldraw ({2.5},{3*e^(-(2.5-4)^2/0.3)}) circle (1pt);

\draw (2.5,-1) -- (6.5,-1) node[pos=.5,below] {$\mathcal O$};
\draw (2.5,-0.95) -- (2.5,-1.05);
\draw (6.5,-0.95) -- (6.5,-1.05);
\draw[dotted] (6.5,-1) -- ({6.5},{3*e^(-(6.5-4)^2/0.3)});
\filldraw ({6.5},{3*e^(-(6.5-4)^2/0.3)}) circle (1pt);
\end{tikzpicture}
\caption*{(a)}
\end{minipage}
\begin{minipage}{0.48\textwidth}
\centering
\begin{tikzpicture}[scale=.8]
\draw [dashed,smooth,samples=100,domain=0:8] plot ({\x},{3*e^(-(\x-6)^2/0.3)});
\node[right] at ({6.5},{3*e^(-(5-6)^2/3)}) {$\mu_{\theta,\,\rho}$};
\draw (0,0) -- (8,0);
\filldraw (1.2,0) circle (2pt) node[above] {$\theta$};
\filldraw (4,0) circle (2pt) node[above] {$\theta'$};

\draw (1,-0.25) -- (1.5,-0.25);
\draw (1,-0.2) -- (1,-0.3);
\draw (1.5,-0.2) -- (1.5,-0.3);
\filldraw ({1},{3*e^(-(1-6)^2/0.3)}) circle (1pt);
\draw[dotted] (1.5,-0.2) -- ({1.5},{3*e^(-(1.5-6)^2/0.3)});
\filldraw ({1.5},{3*e^(-(1.5-6)^2/0.3)}) circle (1pt);

\draw (1,-0.5) -- (2,-0.5);
\draw (1,-0.45) -- (1,-0.55);
\draw (2,-0.45) -- (2,-0.55);
\draw[dotted] (1,-0.5) -- ({1},{3*e^(-(1-6)^2/0.3)});
\filldraw ({1},{3*e^(-(1-6)^2/0.3)}) circle (1pt);
\draw[dotted] (2,-0.75) -- ({2},{3*e^(-(2-6)^2/0.3)});
\filldraw ({2},{3*e^(-(2-6)^2/0.3)}) circle (1pt);

\draw (0,-0.75) -- (2,-0.75);
\draw (0,-0.7) -- (0,-0.8);
\draw (2,-0.7) -- (2,-0.8);
\draw[dotted] (0,-1) -- ({0},{3*e^(-(0-6)^2/0.3)});
\filldraw ({0},{3*e^(-(0-6)^2/0.3)}) circle (1pt);

\draw (0,-1) -- (4,-1) node[pos=.5,below] {$\mathcal O$};
\draw (0,-0.95) -- (0,-1.05);
\draw (4,-0.95) -- (4,-1.05);
\draw[dotted] (4,-1) -- ({4},{3*e^(-(4-6)^2/0.3)});
\filldraw ({4},{3*e^(-(4-6)^2/0.3)}) circle (1pt);

\draw[dashed] (4,-1) -- (8,-1) node[pos=.5,below] {$\mathcal O^{\ext}$};
\draw (8,-0.95) -- (8,-1.05);
\draw[dotted] (8,-1) -- ({8},{3*e^(-(8-6)^2/0.3)});
\filldraw ({8},{3*e^(-(8-6)^2/0.3)}) circle (1pt);
\end{tikzpicture}
\caption*{(b)}
\end{minipage}
\caption{\textit{No-Underrun based orbit selection from the bulk and the tails.}}
\label{fig:nounderrun_OS}
\end{figure}

\subsection{Tuning the lattice spacing \texorpdfstring{$h$}{h}}\label{sec:RWM}

Tuning guidance for the lattice spacing $h$ results from a similarity between the initial shift in NURS and the Random Walk Metropolis method.  We first explain this connection and subsequently provide tuning guidelines implied by a rigorous result on the interplay between $h$ and the Metropolis accept probability in NURS.

The randomization of the initial point in NURS is a fundamental aspect of the method, ensuring that the orbit lies on a lattice randomly selected from the set of all lattices with spacing $h$ along $L(\theta,\rho)$.  It serves two key purposes.  First, it regularizes the transition kernel, which would otherwise be supported on a union of spheres centered at the current state making it singular with respect to Lebesgue measure.  This simplifies theoretical analyses of NURS' transitions (see Sections~\ref{sec:contr} and~\ref{sec:hit_and_run}).  Second, it establishes a connection between NURS and RWM, with important implications for its performance and tuning.

\paragraph*{Connection to Random Walk Metropolis}

When the lattice spacing is small relative to the scale of the target $\mu$ at the current state $\theta$ in direction $\rho$, NURS operates in the \emph{Hit-and-Run regime} behaving similarly to Hit-and-Run (see Figure \ref{fig:regimes} (a)).  However, when the lattice spacing $h$ is large compared to this local scale, the weights in the categorical distribution used to select the next state become negligible for all but one dominant lattice point (see Figure \ref{fig:regimes} (b)).  As a result, the state selection in NURS becomes trivial, as the dominant lattice point is chosen as the next state with high probability.  In this case, the transition is dominated by the Metropolis-adjusted shift.  This behavior corresponds to the \emph{random walk regime}, where NURS behaves similarly to RWM, a transition of which is given in Algorithm~\ref{alg:RWM}.

\begin{algorithm}[ht]
\caption{Random Walk Metropolis: $\ \theta'\sim\pi_{\RWM}(\theta,\cdot)$}
\begin{description}
\item[Inputs:] $\theta \in \mathbb{R}^d$ (current state); \ \ $h > 0$ (lattice size);   \ \ $\mu$ (target density on $\mathbb{R}^{d}$)
\vspace*{2pt}
\hrule
\vspace*{6pt}
\item[Proposal Step:] Sample increment $\xi\sim\mathcal N\bigr(0,\,h^2d^{-1}I_d\bigr)$.
\item[Metropolis Step:] Sample $A \sim \operatorname{Bernoulli}\Bigr(1\wedge \frac{\mu(\theta+\xi)}{\mu(\theta)}\Bigr)$ and return $\theta'=\theta+A\xi$.
\end{description}
\label{alg:RWM}
\end{algorithm}

Although normally distributed increments are common in the Proposal Step, other symmetric distributions can also be used.  The crucial point is the increment distribution's scale relative to the local scale of $\mu$, as this determines the Metropolis accept probability.  As the covariance of the increments, $h^2d^{-1}I_d$, is chosen to match the scale of the random shift increments of NURS in the random walk regime, there is a clear analogy between the Metropolis accept probabilities in Algorithms~\ref{alg:nurs} and~\ref{alg:RWM} via~\eqref{eq:sdm}.

\begin{figure}[ht]
\centering
\begin{minipage}{0.48\textwidth}
\begin{tikzpicture}[scale=.8]
\clip (0,-.5) rectangle (8,4.5);
\fill[gray!50] (2.75,-.1) rectangle (4.7,.1);
\draw [dashed,smooth,samples=100,domain=0:8] plot ({\x},{3*e^(-(\x-4)^2/5)});
\node[right] at ({5},{4*e^(-(5-4)^2/3)}) {$\mu_{\theta+s\rho,\,\rho}$};
\filldraw (3.7,0) circle (2pt) node[below] {$\theta$};
\draw (0,0) -- (8,0);
\foreach \k in {1,...,5} {
\draw[line width=1pt] (1.5*\k-0.3,-.1) -- (1.5*\k-0.3,.1);
\draw ({1.5*\k-0.3},0) -- ({1.5*\k-0.3},{3*e^(-(1.5*\k-0.3-4)^2/5)});
\filldraw ({1.5*\k-0.3},{3*e^(-(1.5*\k-0.3-4)^2/5)}) circle (1.5pt);
}
\end{tikzpicture}
\caption*{(a) \textit{NURS in the Hit-and-Run regime}}
\end{minipage}
\begin{minipage}{0.48\textwidth}
\begin{tikzpicture}[scale=.8]
\clip (0,-.5) rectangle (8,4.5);
\fill[gray!50] (2.75,-.1) rectangle (4.7,.1);
\draw [dashed,smooth,samples=100,domain=0:8] plot ({\x},{4*e^(-(\x-4)^2/0.3)});
\node[right] at ({4.5},{4*e^(-(5-4)^2/3)}) {$\mu_{\theta+s\rho,\,\rho}$};
\filldraw (3.7,0) circle (2pt) node[below] {$\theta$};
\draw (4.2,0) -- (4.2,{4*e^(-(4.2-4)^2/0.3)});
\filldraw (4.2,{4*e^(-(4.2-4)^2/0.3)}) circle (1.5pt);
\draw (0,0) -- (8,0);
\foreach \k in {1,...,5} {
\draw[line width=1pt] (1.5*\k-0.3,-.1) -- (1.5*\k-0.3,.1);}
\end{tikzpicture}
\caption*{(b) \textit{NURS in the random walk regime}}
\end{minipage}
\caption{\textit{NURS connecting to Hit-and-Run and Random Walk Metropolis depending on the relation between lattice spacing and scale of the target along the line.  The gray intervals represent all possible  shifts of the initial state $\theta$.}}
\label{fig:regimes}
\end{figure}

\paragraph*{The Metropolis step in NURS}

As long as the Metropolis step accepts at least with constant probability, RWM explores the state space similarly to the underlying random walk and therefore inherits its diffusive mixing properties.  However, if the acceptance probability degenerates, the chain can get trapped, possibly leading to exponentially slower mixing.  To avoid such bottlenecks, controlling the acceptance probability is critical.

This is addressed elegantly in \cite{andrieuAAP}, where the authors assume the following regularity condition on the target distribution $\mu(d\theta)\propto\exp(-U(\theta))\,d\theta$ where $U:\mathbb R^d\to\mathbb R$:
\begin{equation}\label{eq:Ureg}
    U(\theta')-U(\theta)-(\theta'-\theta)\cdot\nabla U(\theta)\ \leq\ \psi\bigr(|\theta'-\theta|\bigr)\quad\text{for all $\theta,\theta'\in\mathbb R^d$,}
\end{equation}
for some non-decreasing $\psi:[0,\infty)\to[0,\infty)$.  Under this condition, they show that the acceptance probability of RWM (Algorithm \ref{alg:RWM}) is globally lower-bounded:
\begin{equation}\label{eq:RWMreject}
    \mathbb E_{\xi}\Bigr(1\wedge\frac{\mu(\theta+\xi)}{\mu(\theta)}\Bigr)\ \geq\ \frac12\exp\bigr(-\mathbb E_\xi\psi\bigr(|\xi|\bigr)\bigr)\quad\text{for all $\theta\in\mathbb R^d$.}
\end{equation}

An important instance of $\psi$ is  $\psi(r)=\frac12Lr^2$ for some $L>0$, in which case \eqref{eq:Ureg} corresponds to the commonly used notion of \emph{$L$-smoothness}.
For this $\psi$, the expectation becomes $\mathbb E_\xi\psi(|\xi|)=\frac12Lh^2$ so that the acceptance probability is lower-bounded by an absolute constant for $h=O(L^{-1/2})$.
This choice of $h$ ensures that the proposed increments are adapted to the most restrictive scale relevant for effective sampling, specifically within regions containing sufficient probability mass. For example, in Neal's funnel, the neck region can become arbitrarily narrow, making naive step sizes ineffective. However, by considering only the portion of the neck where a significant fraction of the probability mass lies, $h$ can be appropriately scaled to maintain efficient exploration (see Section \ref{sec:funnel}).

Inspired by the connection between NURS and RWM,  similar control over the Metropolis step of NURS can be established by following \cite{andrieuAAP}.
For a given $\theta\in\mathbb R^d$ and $\rho\in\mathbb S^{d-1}$, the acceptance probability in NURS (Algorithm \ref{alg:infty}) is defined as
\begin{equation}\label{eq:alpha}
    \alpha(\theta,\rho)\ =\ \mathbb E_{s'}\Bigr(1\wedge\frac{\mu_{\theta,\,\rho}(s')}{\mu_{\theta,\,\rho}(0)}\Bigr) \quad\text{where}\quad s'\ \sim\ \Unif\bigr([-h/2,\,h/2)\bigr)\;.
\end{equation}

The following lemma establishes a global lower bound on the acceptance probability under the regularity condition~\eqref{eq:Ureg}.   For potentials whose gradient is only locally Lipschitz continuous (e.g., Neal's funnel), localization is used by picking $h$ to obtain a lower bound in a sufficiently large region where most of the probability mass is concentrated.

\begin{lemma}\label{lem:accprob} Let $h>0$ be the lattice spacing.  Suppose \eqref{eq:Ureg} holds for some non-decreasing $\psi:[0,\infty)\to[0,\infty)$.  Then,
\[ \alpha(\theta,\rho)\ \geq\ \frac12\exp\bigr(-\psi(h/2)\bigr)\quad\text{for all $\theta\in\mathbb R^d$ and $\rho\in\mathbb S^{d-1}$.} \]
\end{lemma}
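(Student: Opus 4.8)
The plan is to reduce the claimed bound to an elementary one-dimensional inequality by means of a symmetrization trick, in the spirit of the argument behind \eqref{eq:RWMreject} in \cite{andrieuAAP}. Writing $\mu\propto e^{-U}$ and using \eqref{eq:sdm}, the unknown normalizing constant of $\mu_{\theta,\rho}$ cancels in the Metropolis ratio, so that
\[ \alpha(\theta,\rho)\ =\ \mathbb E_{s'}\!\left[1\wedge e^{-\left(U(\theta+s'\rho)-U(\theta)\right)}\right],\qquad s'\sim\Unif\!\left([-h/2,\,h/2)\right). \]
Since $-s'$ has the same law as $s'$ (the interval is symmetric up to a Lebesgue-null set), I would symmetrize:
\[ \alpha(\theta,\rho)\ =\ \frac12\,\mathbb E_{s'}\!\left[\left(1\wedge e^{-\left(U(\theta+s'\rho)-U(\theta)\right)}\right)+\left(1\wedge e^{-\left(U(\theta-s'\rho)-U(\theta)\right)}\right)\right]. \]

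Next I would feed in the regularity condition. Applying \eqref{eq:Ureg} with $\theta'=\theta\pm s'\rho$ and using $|{\pm}s'\rho|=|s'|$ (as $\rho\in\mathbb S^{d-1}$) gives $U(\theta\pm s'\rho)-U(\theta)\ \le\ \pm s'\,\bigl(\rho\cdot\nabla U(\theta)\bigr)+\psi(|s'|)$. Setting $g:=\rho\cdot\nabla U(\theta)$ and using that $x\mapsto 1\wedge e^{x}$ is non-decreasing, each summand above is bounded below, which yields
\[ \alpha(\theta,\rho)\ \ge\ \frac12\,\mathbb E_{s'}\!\left[\left(1\wedge e^{-s'g-\psi(|s'|)}\right)+\left(1\wedge e^{\,s'g-\psi(|s'|)}\right)\right]. \]

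The core step is then the elementary inequality
\[ \left(1\wedge e^{-u-p}\right)+\left(1\wedge e^{\,u-p}\right)\ \ge\ e^{-p}\qquad\text{for all }u\in\mathbb R,\ p\ge 0. \]
By the symmetry $u\leftrightarrow-u$ one may assume $u\ge0$; if $u\ge p$ then the second term equals $1$ and the bound is immediate, whereas if $0\le u<p$ both minima are attained at the exponentials and the left-hand side equals $e^{-p}(e^{u}+e^{-u})\ge 2e^{-p}$. Applying this with $u=s'g$ and $p=\psi(|s'|)\ge0$ gives $\alpha(\theta,\rho)\ge\frac12\,\mathbb E_{s'}\!\left[e^{-\psi(|s'|)}\right]$, and since $|s'|\le h/2$ with $\psi$ non-decreasing we conclude $\alpha(\theta,\rho)\ge\frac12 e^{-\psi(h/2)}$.

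I do not expect a genuine obstacle here: the only points requiring care are the verification of the elementary inequality, the legitimacy of the symmetrization (valid because the law of $s'$ is symmetric up to a null set), and the use of $\rho\in\mathbb S^{d-1}$ so that $\psi$ is evaluated at $|s'|$ rather than with a spurious norm factor. It is also worth noting in passing that the Metropolis ratio $\mu_{\theta,\rho}(s')/\mu_{\theta,\rho}(0)$ appearing in \eqref{eq:alpha} coincides with $\mu(\theta+s'\rho)/\mu(\theta)$ by \eqref{eq:sdm}, so the same lower bound governs the Metropolis-adjusted shift in Algorithm~\ref{alg:nurs} as well.
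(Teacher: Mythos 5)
Your proof is correct and is essentially the same argument as the paper's: both reduce to the regularity condition \eqref{eq:Ureg}, symmetrize over the sign of $s'$, and invoke an elementary pointwise inequality. The paper applies $1\wedge(\mathsf{ab})\geq(1\wedge\mathsf a)(1\wedge\mathsf b)$ followed by $(1\wedge\mathsf a)+(1\wedge\mathsf a^{-1})\geq1$ and then symmetrizes, whereas you symmetrize first and package both steps into the single inequality $(1\wedge e^{-u-p})+(1\wedge e^{\,u-p})\geq e^{-p}$; these are the same ingredients in a slightly different order.
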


The analogy with RWM provides guidance for tuning the lattice spacing of NURS.  Specifically, $h$ should be chosen similarly to RWM, ensuring that  bottlenecks due to degenerate acceptance probabilities are avoided.

\begin{proof}
Let $\theta\in\mathbb R^d$ and $\rho\in\mathbb S^{d-1}$.
For all $s'\in[-h/2,\,h/2)$, inserting the definition \eqref{eq:sdm} of the scalar displacement measure, \eqref{eq:Ureg}, applying $1\wedge(\mathsf{ab})\geq(1\wedge\mathsf a)(1\wedge\mathsf b)$ for $\mathsf a,\mathsf b\geq0$, and using that $\psi$ is non-decreasing shows
\begin{align*}
1\wedge\frac{\mu_{\theta,\,\rho}(s')}{\mu_{\theta,\,\rho}(0)}
\ &=\ 1\wedge e^{-(U(\theta+s'\rho)-U(\theta))}
\ \geq\ 1\wedge e^{-s'\rho\cdot\nabla U(\theta)-\psi(|s'|)} \\
\ &\geq\ \bigr(1\wedge e^{-s'\rho\cdot\nabla U(\theta)}\bigr)\bigr(1\wedge e^{-\psi(|s'|)}\bigr) \ \geq\ \bigr(1\wedge e^{-s'\rho\cdot\nabla U(\theta)}\bigr)e^{-\psi(h/2)}\;.
\end{align*}
By \eqref{eq:alpha}, integrating over $s'\sim\Unif\bigr([-h/2,\,h/2)\bigr)$ then yields
\[ \alpha(\theta,\rho)\ \geq\ \mathbb E_{s'}\bigr(1\wedge e^{-s'\rho\cdot\nabla U(\theta)}\bigr)e^{-\psi(h/2)}\ =\ \mathbb E_{s'}\bigr(1\wedge e^{+s'\rho\cdot\nabla U(\theta)}\bigr)e^{-\psi(h/2)}\;, \]
where we changed variables to rewrite the lower bound.  
Averaging these expressions and using $(1\wedge\mathsf a)+(1\wedge\mathsf a^{-1})\geq1$ for $\mathsf a\geq0$ finishes the proof:
\[ \alpha(\theta,\rho)\ \geq\ \frac12\mathbb E_{s'}\Bigr(\bigr(1\wedge e^{-s'\rho\cdot\nabla U(\theta)}\bigr)+\bigr(1\wedge e^{+s'\rho\cdot\nabla U(\theta)}\bigr)\Bigr)e^{-\psi(h/2)}\ \geq\ \frac12e^{-\psi(h/2)}\;. \] \end{proof}

\section{Wasserstein contraction of NURS}\label{sec:contr}

In this section, we build on the previously established connection between NURS and generalized Hit-and-Run to extend recent coupling-based contraction results from Hit-and-Run, as presented in \cite{BoEbOb2024}, to NURS. Specifically, we analyze the case where the target distribution is a centered multivariate Gaussian and show that the uniform-shifted infinite-orbit NURS (Algorithm~\ref{alg:infty_unadjusted}) exhibits contraction in Wasserstein distance.   More generally, the study of locally adaptive MCMC methods in the Gaussian setting remains an open and important problem. While Gaussian targets are analytically tractable, the transition kernel of NURS --- like that of Hit-and-Run --- is not Gaussian, making its analysis nontrivial. 

The argument does not directly extend to infinite-orbit NURS with Metropolis-adjusted shift, as the coupling construction involves careful alignment of the coupled transitions by suitably coupling their underlying shifts.  With the Metropolis filter, the probability of selecting an unshifted orbit is positive preventing alignment.  This is consistent with the general observation that Metropolis adjustment often complicates Wasserstein contraction \cite{BoEbZi2020}.   A recent coupling framework for the mixing analysis of Metropolis-adjusted Markov chains addresses this issue by only requiring Wasserstein contraction of the unadjusted chain, among other assumptions \cite{BouRabeeOberdoerster2023}.

Consider the centered Gaussian target measure
\[ \gamma^{\C}\ =\ \mathcal{N}(0,\mathcal{C})\quad\text{on $\mathbb R^d$} \]
where $\C$ is a symmetric positive definite covariance matrix.

The $L^2$-Wasserstein distance between two probability measures $\eta$ and $\nu$ on $\mathbb{R}^d$, with respect to the metric induced by the norm  $|\theta|_{\C^{-1/2}}=|\C^{-1/2}\theta|$, is defined as
\begin{equation}\label{eq:Wdef}
    \W^2_{\C^{-1/2}}(\eta,\nu)\ =\ \inf\bigr(\mathbb E|\theta-\tilde\theta|^2_{\C^{-1/2}}\bigr)^{1/2}
\end{equation}
where the infimum is taken over all couplings of $\eta$ and $\nu$, i.e., all pairs of random variables $(\theta,\tilde\theta)$ defined on a common probability space such that $\theta\sim\eta$ and $\tilde\theta\sim\nu$.  Here, for any symmetric positive definite matrix $A$, the matrix $A^{1/2}$ denotes its principal square root.

The following theorem extends the contraction for generalized Hit-and-Run, recently established in \cite{BoEbOb2024}, to the setting of uniform-shifted infinite-orbit NURS.

\begin{theorem}\label{thm:HRcontr}
For $\mu=\gamma^\C$, uniform-shifted infinite-orbit NURS with transition kernel $\pi_{\infty,u}$ as defined in Algorithm \ref{alg:infty_unadjusted} satisfies
\begin{equation}\label{eq:Wcontr}
    \W^2_{\C^{-1/2}}\bigr(\pi_{\infty,u}(\theta,\cdot),\, \pi_{\infty,u}(\tilde \theta,\cdot)\bigr)\ \leq\ (1-\lambda)|\theta-\tilde \theta|_{\C^{-1/2}}\quad\text{for all $\theta,\tilde \theta\in\mathbb R^d$}
\end{equation}
with contraction rate given by 
\begin{align} \lambda\ =\ \frac12\inf_{|\zeta|=1}\mathbb E_{\rho\sim\tau}\Bigr(\zeta\cdot\frac{\C^{-1/2}\rho}{|\C^{-1/2}\rho|}\Bigr)^2\;. \label{eq:rate}
\end{align}
\end{theorem}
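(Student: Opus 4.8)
The plan is to construct an explicit coupling of the two NURS transitions from $\theta$ and $\tilde\theta$ and compute the expected squared distance after one step. The key structural fact is that for the Gaussian target $\mu = \gamma^\C$, the scalar displacement measure $\mu_{\vartheta,\rho}$ is itself a (one-dimensional) Gaussian whose mean depends affinely on $\vartheta$ and whose variance depends only on $\rho$ (and $\C$): writing $\mu(\vartheta + t\rho) \propto \exp(-\tfrac12 (\vartheta+t\rho)^\intercal \C^{-1}(\vartheta + t\rho))$, completing the square in $t$ shows $\mu_{\vartheta,\rho} = \mathcal N(m(\vartheta,\rho), \sigma^2(\rho))$ with $m(\vartheta,\rho) = - \frac{\rho^\intercal \C^{-1}\vartheta}{\rho^\intercal \C^{-1}\rho}$ and $\sigma^2(\rho) = (\rho^\intercal \C^{-1}\rho)^{-1}$. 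In particular, the \emph{difference} of the conditional means along a common direction $\rho$ is the linear contraction already exploited for Hit-and-Run in \cite{BoEbOb2024}: $m(\theta,\rho) - m(\tilde\theta,\rho) = -\frac{\rho^\intercal \C^{-1}(\theta-\tilde\theta)}{\rho^\intercal\C^{-1}\rho}$, so that $\theta + m(\theta,\rho)\rho$ and $\tilde\theta + m(\tilde\theta,\rho)\rho$ differ by the projection of $\theta - \tilde\theta$ onto the hyperplane $\C^{-1}$-orthogonal to $\rho$.

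**Next I would** set up the coupling of the algorithmic randomness. Use the \emph{same} direction $\rho \sim \tau$ and the \emph{same} uniform shift $s \sim \Unif([-h/2, h/2))$ for both chains (this is exactly where the Metropolis filter would break things — hence the restriction to Algorithm~\ref{alg:infty_unadjusted}). After shifting, the two chains live on parallel lattices $\theta + (s + h\Z)\rho$ and $\tilde\theta + (s + h\Z)\rho$. The categorical displacement $T$ is drawn from $\cat(h\Z, \mu_{\theta + s\rho,\rho})$ for the first chain and $\tilde T$ from $\cat(h\Z, \mu_{\tilde\theta + s\rho,\rho})$ for the second. Crucially, the shift $s$ relative to the line-orthogonal-projection point is what matters: since $\mu_{\theta+s\rho,\rho}(t) \propto \mu_{\theta,\rho}(s+t)$, both categorical laws are restrictions of a common Gaussian shape $\mathcal N(\cdot, \sigma^2(\rho))$ to lattices that are shifted copies of each other, offset by the \emph{deterministic} amount $m(\theta,\rho) - m(\tilde\theta,\rho)$ modulo $h$. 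I would then couple $T$ and $\tilde T$ by a shift coupling: choose them so that $(\theta + (s+T)\rho)$ and $(\tilde\theta + (s+\tilde T)\rho)$ agree in the $\rho$-direction up to the unavoidable lattice residue, so after one step the $\C^{-1/2}$-distance contributed along $\rho$ is controlled, while the component orthogonal to $\rho$ is exactly the linear contraction factor. Taking expectation over $\rho \sim \tau$ (and the worst-case direction $\zeta = \C^{-1/2}(\theta - \tilde\theta)/|\cdot|$) produces the rate $\lambda$ in~\eqref{eq:rate}, with the factor $\tfrac12$ coming from bounding the probability that the shift coupling fails to align the lattices, exactly as in the Hit-and-Run argument.

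**The main obstacle** will be handling the lattice discretization cleanly: unlike exact Hit-and-Run, the displacement is forced onto $h\Z$, so the two coupled chains cannot be made to agree exactly in the $\rho$-direction — there is a residual offset of size up to $h$. I would control this by a reflection/shift coupling of the two categorical variables that succeeds with probability at least $1/2$ (uniformly in the offset, using that both are restrictions of the same Gaussian density to lattices differing by a fixed translation), and on the failure event bound the distance crudely; the point is that the expected \emph{squared} $\C^{-1/2}$-distance in the $\rho$-direction, weighted by $|\C^{-1/2}\rho|^{2}$ coming from the metric, still contracts because the orthogonal projection strictly decreases the norm by the stated factor and the $\rho$-direction error does not accumulate in expectation under the coupling. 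A secondary technical point is that the random shift $s$ is genuinely needed to make the transition kernel absolutely continuous and to randomize the lattice position, which is what lets the shift coupling of $T$ succeed with a dimension-free probability; I would verify that averaging over $s \sim \Unif([-h/2,h/2))$ indeed yields the clean $\tfrac12$ prefactor rather than a worse, $h$-dependent constant. Finally, I would take the infimum over unit vectors $\zeta$ at the very end, noting that the bound is quadratic in the relevant projection so that the supremum of the contraction defect over starting pairs is attained in the direction maximizing $\mathbb E_{\rho\sim\tau}(\zeta \cdot \C^{-1/2}\rho/|\C^{-1/2}\rho|)^2$.
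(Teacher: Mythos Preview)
Your proposal contains a genuine gap: the decision to use the \emph{same} uniform shift $s$ in both chains is precisely what prevents the argument from going through, and the paper's proof hinges on doing the opposite. With synchronized $s$, the two categorical laws are Gaussians restricted to $h\mathbb Z$ but centered at points whose offsets from the lattice differ (in the paper's notation, $\sigma\ne\tilde\sigma$ in general). Writing the transition in natural coordinates as
\[
\C^{-1/2}\theta'\ =\ \Pi_{\C^{-1/2}\rho}\,\C^{-1/2}\theta\ +\ (W-\sigma)\,\C^{-1/2}\rho\;,
\]
and similarly for the second copy, the difference picks up an extra term $\bigl((W-\sigma)-(\widetilde W-\tilde\sigma)\bigr)\C^{-1/2}\rho$ which is $\C^{-1/2}$-orthogonal to the projection piece. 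By Pythagoras this term only \emph{adds} to the squared distance; no reflection or shift coupling of the categoricals can make it vanish unless $\sigma=\tilde\sigma$, since $W,\widetilde W\in h\mathbb Z$ while $\sigma-\tilde\sigma\in(-h,h)$ is generically nonzero. Your ``crude bound on the failure event'' would then strictly inflate the contraction factor beyond $(1-\lambda)$, so the theorem as stated would not follow.

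The paper's key move is to couple the shifts \emph{asynchronously}: given $s\sim\Unif([-h/2,h/2))$, set $\tilde s$ to be the unique element of $[-h/2,h/2)$ with $\tilde s\equiv s+\frac{(\tilde\theta-\theta)\cdot\C^{-1}\rho}{|\C^{-1/2}\rho|^2}\pmod h$. This forces $\sigma=\tilde\sigma$, hence $W$ and $\widetilde W$ have the \emph{same} categorical law and can be synchronized, yielding exactly $\C^{-1/2}(\theta'-\tilde\theta')=\Pi_{\C^{-1/2}\rho}\,\C^{-1/2}(\theta-\tilde\theta)$ almost surely, with no lattice residue whatsoever. You also misidentify the origin of the factor $\tfrac12$ in~\eqref{eq:rate}: it does not come from a coupling success probability but from the elementary inequality $(1-x)^{1/2}\le 1-x/2$ applied when passing from $\mathbb E|\Pi_{\C^{-1/2}\rho}z|^2=|z|^2\bigl(1-\mathbb E(\zeta\cdot\C^{-1/2}\rho/|\C^{-1/2}\rho|)^2\bigr)$ to its square root (this is the content of Lemma~\ref{lem:contrproj}).
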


The theorem provides a lower bound for the global Wasserstein contraction rate, defined as the supremum over all $\lambda$ satisfying \eqref{eq:Wcontr}.
This implies a lower bound for the coarse Ricci curvature of $(\mathbb R^d,|\cdot|_{\C^{-1/2}})$ equipped with uniform-shift infinite-orbit NURS for $\mu=\gamma^\C$ which has far-reaching consequences \cite{OLLIVIER2009,JOULIN2010}.

To prove the theorem, we construct an explicit coupling $(\theta,\tilde\theta)$ of $\pi_{\infty,u}(\theta,\cdot)$ and $\pi_{\infty,u}(\tilde\theta,\cdot)$ that satisfies
\[ \bigr(\mathbb E|\theta-\tilde\theta|^2_{\C^{-1/2}}\bigr)^{1/2}\ \leq\ (1-\lambda)|\theta-\tilde\theta|_{\C^{-1/2}} \]
as the left hand side upper bounds the Wasserstein distance on the right hand side of \eqref{eq:Wcontr}, cf. \eqref{eq:Wdef}.
We first discuss the corresponding coupling construction for generalized Hit-and-Run as presented in \cite{BoEbOb2024} and subsequently extend it to uniform-shifted infinite-orbit NURS.

For $\theta\in\mathbb R^d$ and $\rho\in\mathbb S^{d-1}$, the scalar displacement measure corresponding to $\gamma^\C$ takes the form
\begin{equation}\label{eq:sdmGaussian}
    \gamma^\C_{\theta,\,\rho}\ =\ \mathcal N\Bigr(-\frac{\theta\cdot\C^{-1}\rho}{|\C^{-1/2}\rho|^2},\,|\C^{-1/2}\rho|^{-2}\Bigr)\;.
\end{equation}

The contractive coupling for generalized Hit-and-Run (Algorithm \ref{alg:gH&R}) is based on the observation that writing the scalar displacement used in the Run Step as
\begin{equation}\label{eq:TgHR}
    T\ =\ -\frac{\theta\cdot\C^{-1}\rho}{|\C^{-1/2}\rho|^2}\ +\ Z\quad\text{with}\quad Z\ \sim\ \mathcal N\bigr(0,\,|\C^{-1/2}\rho|^{-2}\bigr)
\end{equation}
allows us to represent a transition $\theta'\sim\pi_{\gHR}(\theta,\cdot)$ as
\begin{equation}\label{eq:HRnat}
    \C^{-1/2}\theta'\ =\ \Pi_{\C^{-1/2}\rho}\,\C^{-1/2}\theta\ +\ Z\,\C^{-1/2}\rho~~\text{with}~~\rho\ \sim\ \tau\quad\text{and}~~ Z\ \sim\ \mathcal N\bigr(0,\,|\C^{-1/2}\rho|^{-2}\bigr)\;,
\end{equation}
where  $\Pi_w=I_d-|w|^{-2}w\otimes w$, $w\in\mathbb R^d$ is the projection onto the orthogonal complement of $\operatorname{span}(w)$.
We can then construct a coupling $(\theta',\tilde\theta')$ of $\pi_{\gHR}(\theta,\cdot)$ and $\pi_{\gHR}(\tilde\theta,\cdot)$ by relating $\theta'$ to a second copy $\tilde\theta'\sim\pi_{\gHR}(\tilde\theta,\cdot)$ given by
\begin{equation}\label{eq:HRsecond}
    \C^{-1/2}\tilde\theta'\ =\ \Pi_{\C^{-1/2}\tilde\rho}\,\C^{-1/2}\tilde\theta\ +\ \widetilde Z\,\C^{-1/2}\tilde\rho~~\text{with}~~\tilde\rho\ \sim\ \tau~~\text{and}\quad \widetilde Z\ \sim\ \mathcal N\bigr(0,\,|\C^{-1/2}\tilde\rho|^{-2}\bigr)\;.
\end{equation}
In particular, we can use the same auxiliary variables in both transitions, i.e., $\rho=\tilde\rho$ and $Z=\widetilde Z$.
This is referred to as a \emph{synchronous coupling}.
Note that the availability of this coupling hinges on the auxiliary variables' distributions not depending on the initial state.
For this choice, the second terms on the right hand sides of \eqref{eq:HRnat} and \eqref{eq:HRsecond} equal and hence cancel in the difference:
\begin{equation}\label{eq:HRdiff}
    \C^{-1/2}(\theta'-\tilde\theta')\ =\ \Pi_{\C^{-1/2}\rho}\,\C^{-1/2}(\theta-\tilde\theta)\;.
\end{equation}
This shows that, in the coordinates $\C^{-1/2}\theta$ natural to the target $\gamma^\C$, a transition of the difference between the synchronously coupled copies corresponds to a projection onto a random $(d-1)$-dimensional linear subspace.
These random projections contract on average, as quantified by the following general contraction result from \cite[Lemma 1]{BoEbOb2024}.

\begin{lemma}{\cite{BoEbOb2024}}\label{lem:contrproj}
Let $\eta$ be a probability measure on $\mathbb R^d$.
For all $z\in\mathbb R^d$, $\Pi_w$ satisfies
\begin{equation}\label{eq:generalrate}
    \bigr(\mathbb E_{w\sim\eta}|\Pi_w\,z|^2\bigr)^{1/2}\ \leq\ (1-\lambda)\,|z|\quad\text{with}\quad \lambda\ =\ \frac12\inf_{|\zeta|=1}\mathbb E_{w\sim\eta}\Bigr(\zeta\cdot\frac{w}{|w|}\Bigr)^2 \;. 
\end{equation}
\end{lemma}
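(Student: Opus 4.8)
The plan is to reduce the statement to an elementary computation with the orthogonal projection $\Pi_w$, without needing any structure of $\eta$ beyond it being a probability measure (supported away from the origin, which holds in the intended application where $w=\C^{-1/2}\rho$ with $\rho\in\mathbb S^{d-1}$). First I would record the Pythagorean identity for the projection onto $\operatorname{span}(w)^\perp$: for any $z\in\mathbb R^d$ and any $w\neq0$, since $\Pi_w z = z - |w|^{-2}(w\cdot z)\,w$, a one-line expansion gives $|\Pi_w z|^2 = |z|^2 - |w|^{-2}(w\cdot z)^2$ (equivalently, using that $\Pi_w$ is symmetric idempotent, $|\Pi_w z|^2 = z\cdot\Pi_w z$). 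The case $z=0$ is trivial, so assume $z\neq 0$ and set $\zeta = z/|z|$, a unit vector; then $|\Pi_w z|^2 = |z|^2\bigl(1 - \bigl(\frac{w}{|w|}\cdot\zeta\bigr)^2\bigr)$.

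Next I would take the expectation over $w\sim\eta$. By linearity, $\mathbb E_{w\sim\eta}|\Pi_w z|^2 = |z|^2\bigl(1 - \mathbb E_{w\sim\eta}\bigl(\frac{w}{|w|}\cdot\zeta\bigr)^2\bigr)$, where the subtracted term is the quadratic form $\zeta\cdot\bigl(\mathbb E_{w\sim\eta}\bigl[\frac{w\otimes w}{|w|^2}\bigr]\bigr)\zeta$ evaluated at the particular unit vector $\zeta=z/|z|$. Bounding it below by the infimum over all unit vectors, $\mathbb E_{w\sim\eta}\bigl(\frac{w}{|w|}\cdot\zeta\bigr)^2 \ge \inf_{|\zeta'|=1}\mathbb E_{w\sim\eta}\bigl(\frac{w}{|w|}\cdot\zeta'\bigr)^2 = 2\lambda$, yields $\mathbb E_{w\sim\eta}|\Pi_w z|^2 \le (1-2\lambda)\,|z|^2$.

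Finally I would convert this into the stated form. Since $\bigl(\frac{w}{|w|}\cdot\zeta'\bigr)^2\le 1$ pointwise for any unit $\zeta'$, we get $\lambda\le\tfrac12$, so $1-2\lambda\in[0,1]$ and, in particular, $1-2\lambda\le(1-\lambda)^2$ because $(1-\lambda)^2-(1-2\lambda)=\lambda^2\ge0$; moreover $1-\lambda\ge\tfrac12>0$. Hence taking square roots is legitimate and order-preserving, and $\bigl(\mathbb E_{w\sim\eta}|\Pi_w z|^2\bigr)^{1/2}\le(1-\lambda)\,|z|$, as claimed. I do not expect a genuine obstacle here: the only points requiring a moment's care are checking $\lambda\le\tfrac12$ (so the square-root step is valid and the rate is nontrivial) and the trivial algebraic inequality $1-2\lambda\le(1-\lambda)^2$ used to match the statement exactly; compactness of the unit sphere and continuity of $\zeta'\mapsto\mathbb E_{w\sim\eta}\bigl(\frac{w}{|w|}\cdot\zeta'\bigr)^2$ also guarantee the infimum defining $\lambda$ is attained, though attainment is not actually needed for the bound.
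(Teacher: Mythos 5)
The paper states Lemma~\ref{lem:contrproj} by citation to \cite{BoEbOb2024} and does not reproduce a proof, so there is nothing internal to compare against. Your argument is nevertheless correct and is essentially the only natural one: the Pythagorean identity $|\Pi_w z|^2 = |z|^2 - |w|^{-2}(w\cdot z)^2$, normalization to $\zeta = z/|z|$, bounding the resulting quadratic form by the infimum over the unit sphere to get $\mathbb E_{w\sim\eta}|\Pi_w z|^2 \le (1-2\lambda)|z|^2$, and then the elementary inequality $1-2\lambda \le (1-\lambda)^2$ (valid since $\lambda \le \tfrac12$) to pass from the sharp $\sqrt{1-2\lambda}$ factor to the cleaner $1-\lambda$ in the statement. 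Your observations that $\lambda\le\tfrac12$ justifies both the square-root step and the nontriviality of the rate, and that attainment of the infimum is not needed, are exactly the right checks. One small caveat worth stating explicitly if you were to write this up: the identity $|\Pi_w z|^2 = |z|^2 - |w|^{-2}(w\cdot z)^2$ and the reduction to $w/|w|$ require $w \neq 0$ $\eta$-almost surely, which you correctly flagged as holding in the intended application $w = \C^{-1/2}\rho$ with $\rho \in \mathbb S^{d-1}$ and $\C$ positive definite.
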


Taking norms and expectations in \eqref{eq:HRdiff} and subsequently applying Lemma \ref{lem:contrproj} yields a result identical to Theorem \ref{thm:HRcontr} for generalized Hit-and-Run, see \cite{BoEbOb2024}, which is known to be sharp.  Remarkably, the rates for generalized Hit-and-Run and uniform-shifted infinite-orbit NURS coincide.

\begin{figure}
\centering
\begin{tikzpicture}[scale=1.2]
\clip (-0.5,-0.8) rectangle (8.5,4.2);
\draw [line width=1pt,dashed,smooth,samples=100,domain=0:8] plot({\x},{4*e^(-(\x-3.6)^2)});
\draw [dashed] (3.6,0) -- (3.6,4);
\filldraw (3.6,-.1) circle (0pt) node[below] {$-\frac{\theta\cdot\C^{-1}\rho}{|\C^{-1/2}\rho|^2}-s$};
\node[right] at (4.5,{4*e^(-(4.5-3.6)^2)}) {$\gamma^{\C}_{\theta+s\rho,\,\rho}$};
\node[below] at (7,-.1) {$h\mathbb Z$};

\draw (2,0) -- (2,{4*e^(-(2-3.6)^2)});
\filldraw (2,{4*e^(-(2-3.6)^2)}) circle (1.5pt);
\draw (3,0) -- (3,{4*e^(-(3-3.6)^2)});
\filldraw (3,{4*e^(-(3-3.6)^2)}) circle (1.5pt);
\draw (4,0) -- (4,{4*e^(-(4-3.6)^2)});
\filldraw (4,{4*e^(-(4-3.6)^2)}) circle (1.5pt) node[right] {$\mathrm{Law}(T)$};
\draw (5,0) -- (5,{4*e^(-(5-3.6)^2)});
\filldraw (5,{4*e^(-(5-3.6)^2)}) circle (1.5pt);

\draw (0,0) -- (8,0);
\foreach \k in {0,...,8} {\draw[line width=1pt] (\k,-.05) -- (\k,.05);}

\draw[->,line width=2pt] (3,0) -- (3.6,0);
\node[above] at (3.3,0) {$\sigma$};
\end{tikzpicture}
\caption{\textit{Rewriting the scalar displacement $T$ as in \eqref{eq:T}.  Starting from the mean of the scalar displacement measure $\gamma^\C_{\theta+s\rho,\,\rho}$, $-\sigma$ shifts to the lattice $h\mathbb Z$ from where the difference to $T$ is given by the categorical random variable $W$.  The weights of $W$ follow a normal distribution centered at $\sigma$ which quantifies the non-negative deviation of the mean from the lattice $h\mathbb Z$.}}
\label{fig:shift}
\end{figure}
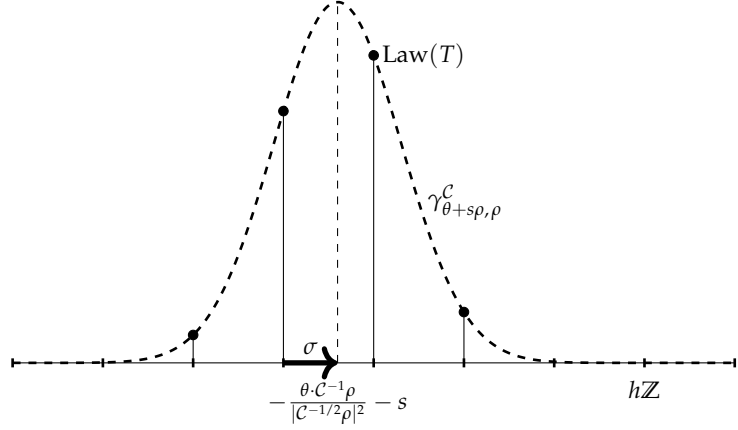

\begin{proof}[Proof of Theorem \ref{thm:HRcontr}]
Based on the above considerations on generalized Hit-and-Run, we now construct a coupling for uniform-shifted infinite-orbit NURS (Algorithm \ref{alg:infty_unadjusted}).
For given $\theta\in\mathbb R^d$, $\rho\in\mathbb S^{d-1}$ and $s\in[-h/2,\,h/2)$, in correspondence to \eqref{eq:TgHR}, the scalar displacement drawn in Run Step 2. can be written as
\begin{equation}\label{eq:T}
    T\ =\ -\frac{\theta\cdot\C^{-1}\rho}{|\C^{-1/2}\rho|^2}\ -\ s\ -\ \sigma\ +\ W
\end{equation}
with
\begin{equation}\label{eq:sigma}
    \sigma\ =\ \inf\Bigr\{t\geq0\ :\ -\frac{\theta\cdot\C^{-1}\rho}{|\C^{-1/2}\rho|^2}-s-t\in h\mathbb Z\Bigr\}
\end{equation}
and
\begin{equation}\label{eq:W}
    W\ \sim\ \cat\Bigr(\mathcal N\bigr(\sigma,\,|\C^{-1/2}\rho|^{-2}\bigr)(t)\Bigr)_{t\in h\mathbb Z}\;.
\end{equation}
This is illustrated in Figure \ref{fig:shift}.
Similarly to \eqref{eq:HRnat}, a transition step $\theta'\sim\pi_{\infty,u}(\theta,\cdot)$ of uniform-shifted infinite-orbit NURS in natural coordinates therefore takes the form
\begin{equation}\label{eq:inftystep}
\C^{-1/2}\theta'\ =\ \Pi_{\C^{-1/2}\rho}\,\C^{-1/2}\theta\ +\ (W-\sigma)\C^{-1/2}\rho
\end{equation}
with $\rho\sim\tau$, $s\sim\mathrm{Unif}([-h/2,\,h/2))$ and $\sigma$, $W$ as defined above.
We again construct a coupling $(\theta',\tilde\theta')$ by relating $\theta'$ to a second copy $\tilde\theta'\sim\pi_{\infty,u}(\tilde\theta,\cdot)$ given by
\begin{equation}\label{eq:infty_second}
\C^{-1/2}\tilde\theta'\ =\ \Pi_{\C^{-1/2}\tilde\rho}\,\C^{-1/2}\tilde\theta\ +\ (\widetilde W-\tilde\sigma)\C^{-1/2}\tilde\rho
\end{equation}
with $\tilde\rho\sim\tau$, $\tilde s\sim\mathrm{Unif}([-h/2,\,h/2))$ and $\tilde\sigma$, $\widetilde W$ defined similar to $\sigma$, $W$ but with $\tilde\theta,\tilde\rho,\tilde s$ instead of $\theta,\rho,s$.
The goal of the construction is, as achieved for generalized Hit-and-Run, to ensure
\begin{equation}\label{eq:goal}
    \C^{-1/2}(\theta'-\tilde\theta')\ =\ \Pi_{\C^{-1/2}\rho}\,\C^{-1/2}(\theta-\tilde\theta)\;.
\end{equation}
For the projections in the two copies to coincide, we again set $\rho=\tilde\rho$.

\begin{figure}
\centering
\begin{tikzpicture}[scale=1]
\clip (-6,-4) rectangle (6,6);

\foreach \k in {3,...,8} {
\filldraw[cyan] ({-7+\k-.1-1.5*e^(-(\k-.1-5.5)^2/2)},{-4+\k-.1+1.5*e^(-(\k-.1-5.5)^2/2)}) circle (1.5pt);
\draw[cyan,dashed] ({-7+\k-.1},{-4+\k-.1}) -- ({-7+\k-.1-1.5*e^(-(\k-.1-5.5)^2/2)},{-4+\k-.1+1.5*e^(-(\k-.1-5.5)^2/2)});
\filldraw[cyan] ({-7+\k+2-.1+1.5*e^(-(\k+2-.1-7.5)^2/2)},{-8+\k+2-.1-1.5*e^(-(\k+2-.1-7.5)^2/2)}) circle (1.5pt);
\draw[cyan,dashed] ({-7+\k+2-.1},{-8+\k+2-.1}) -- ({-7+\k+2-.1+1.5*e^(-(\k+2-.1-7.5)^2/2)},{-8+\k+2-.1-1.5*e^(-(\k+2-.1-7.5)^2/2)});
}
\filldraw[cyan] ({-7+7-.1-1.5*e^(-(7-.1-5.5)^2/2)},{-4+7-.1+1.5*e^(-(7-.1-5.5)^2/2)}) circle (1.5pt) node[above] {$W$};
\filldraw[cyan] ({-7+7+2-.1+1.5*e^(-(7+2-.1-7.5)^2/2)},{-8+7+2-.1-1.5*e^(-(7+2-.1-7.5)^2/2)}) circle (1.5pt) node[right] {$\widetilde W$};

\draw[dashed] (-7,7) -- (7,-7);
\node[below left] at (6,-3) {$\operatorname{span}(\C^{-1/2}\rho)^\perp$};
\draw[line width=1pt] (-7,-4) -- (7,10);
\draw[line width=1pt] (-7,-8) -- (7,6);
\foreach \k in {0,...,14} {
\draw[line width=1pt] ({-7+\k-.05-.1},{-4+\k+.05-.1}) -- ({-7+\k+.05-.1},{-4+\k-.05-.1});
\draw[line width=1pt] ({-7+\k-.05-.1},{-8+\k+.05-.1}) -- ({-7+\k+.05-.1},{-8+\k-.05-.1});
\draw[dashed] ({-7+\k-.1},{-4+\k-.1}) -- ({-7+2+\k-.1},{-8+2+\k-.1});
}

\draw [cyan,dashed,smooth,samples=100,domain=0:14] plot ({-7+\x-1.5*e^(-(\x-5.5)^2/2)},{-4+\x+1.5*e^(-(\x-5.5)^2/2)});
\draw [cyan,dashed,smooth,samples=100,domain=0:14] plot ({-7+\x+1.5*e^(-(\x-7.5)^2/2)},{-8+\x-1.5*e^(-(\x-7.5)^2/2)});

\draw[line width=1.5pt,->] ({-7+4.4-.1},{-4+4.4-.1}) -- ({-7+4-.1},{-4+4-.1}) node[pos=0.5,above left] {$s$};
\draw[line width=1.5pt,->] ({-7+10.7-.1},{-8+10.7-.1}) -- ({-7+11-.1},{-8+11-.1}) node[pos=0.5,above left] {$\tilde s$};
\draw[color=SeaGreen,line width=1.5pt,->] ({-7+5-.1},{-4+5-.1}) -- ({-7+5.5-.1},{-4+5.5-.1}) node[pos=0.5,above left] {$\sigma$};
\draw[color=SeaGreen,line width=1.5pt,->] ({-7+7-.1},{-8+7-.1}) -- ({-7+7.5-.1},{-8+7.5-.1}) node[pos=0.5,above left] {$\tilde\sigma$};
\node[cyan, above left] at ({-7+5-.1},{-4+5-.1}) {$\scriptstyle0$};
\node[cyan, below right] at ({-7+7-.1},{-8+7-.1}) {$\scriptstyle0$};

\filldraw ({-7+4.4-.1},{-4+4.4-.1}) circle (2.5pt) node[right] {$\C^{-1/2}\theta$};
\filldraw ({-7+9-.1},{-8+9-.1}) circle (2.5pt) node[right] {$\C^{-1/2}\tilde\theta$};
\filldraw[color=Blue] ({-7+7-.1},{-4+7-.1}) circle (3pt) node[right] {$\C^{-1/2}\theta'$};
\filldraw ({-7+10.7-.1},{-8+10.7-.1}) circle (2.5pt) node[right] {$\C^{-1/2}\tilde\theta$};
\filldraw[color=Blue] ({-7+9-.1},{-8+9-.1}) circle (3pt) node[right] {$\C^{-1/2}\tilde\theta'$};
\filldraw[color=OliveGreen] ({-7+5.5},{-4+5.5}) circle (2pt) node[right] {$\Pi_{\C^{-1/2}\rho}\C^{-1/2}\theta$};
\filldraw[color=OliveGreen] ({-7+7.5},{-8+7.5}) circle (2pt) node[right] {$\Pi_{\C^{-1/2}\rho}\C^{-1/2}\tilde\theta$};

\node[align=center] at (-2.5,5) {${\color{Blue}\C^{-1/2}\theta'}={\color{OliveGreen}\Pi_{\C^{-1/2}\rho}\,\C^{-1/2}\theta}+({\color{Cyan}W}-{\color{SeaGreen}\sigma})\C^{-1/2}\rho$\\${\color{Blue}\C^{-1/2}\tilde\theta'}={\color{OliveGreen}\Pi_{\C^{-1/2}\rho}\,\C^{-1/2}\tilde\theta}+({\color{Cyan}\widetilde W}-{\color{SeaGreen}\tilde\sigma})\C^{-1/2}\rho$};
\end{tikzpicture}
\caption{\textit{Coupling of two copies $\theta'\sim\pi_{\infty,u}(\theta,\cdot)$ and $\tilde\theta'\sim\pi_{\infty,u}(\tilde\theta,\cdot)$ of uniform-shifted infinite-orbit NURS the transitions of which in natural coordinates as derived in \eqref{eq:inftystep}, \eqref{eq:infty_second} are given in the upper left corner.
From their initial states (black), the copies select their subsequent state (dark blue) from the shifted lattices according to the weights (light blue) determined by the scalar displacement measure centered at the intersection with $\operatorname{span}(\C^{-1/2}\rho)^\perp$.
By coupling the shifts $s,\tilde s$ such that the lattices are aligned relative to $\operatorname{span}(\C^{-1/2}\rho)^\perp$, i.e., $\sigma=\tilde\sigma$, the weights (light blue) for both copies coincide so that their subsequent states (dark blue) can be synchronized.
This allows the coupling to achieve the goal \eqref{eq:goal} of reducing the difference in natural coordinates to the projection onto $\operatorname{span}(\C^{-1/2}\rho)^\perp$.}}
\label{fig:coupling}
\end{figure}
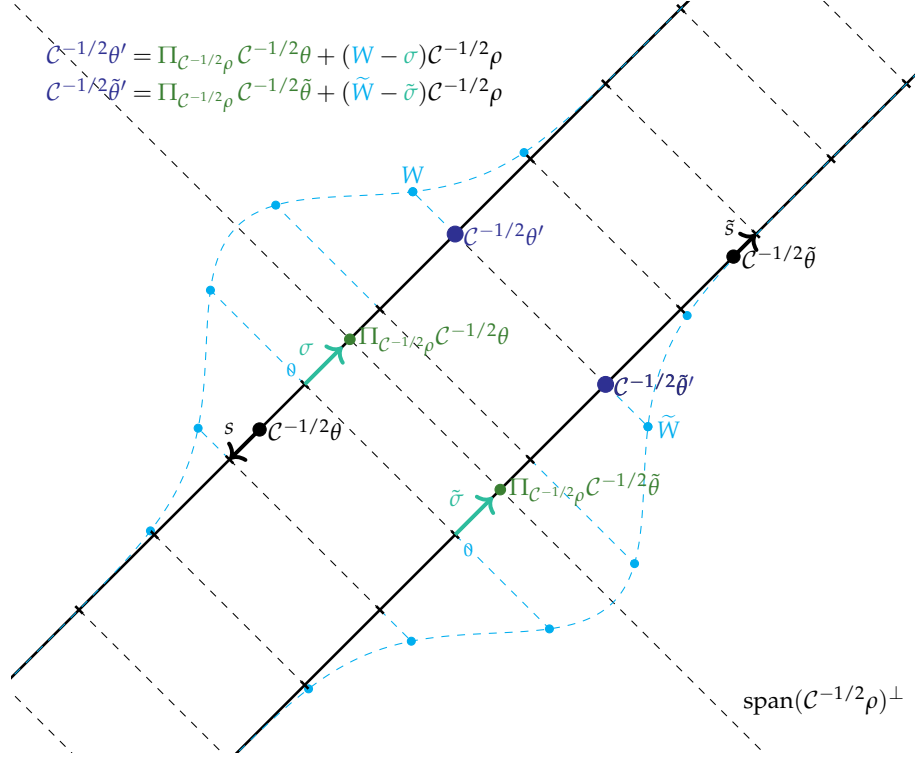

Now, the crucial step in the construction is the coupling of the shifts $s$ and $\tilde s$.
If we were to synchronize them, $\sigma$ and $\tilde\sigma$ would generally not coincide and neither would the distributions of $W$ and $\widetilde W$.
Then, it would be unclear how to synchronize the second terms on the right hand sides of \eqref{eq:inftystep} and \eqref{eq:infty_second}.
Instead, we couple the shifts in such a way that $\sigma=\tilde\sigma$, cf. Figure \ref{fig:coupling}.
As the laws of $W$ and $\widetilde W$ then coincide, we can set $W=\widetilde W$ making the second terms on the right hand sides of \eqref{eq:inftystep} and \eqref{eq:infty_second} equal and therewith achieving \eqref{eq:goal}.

More precisely, given $s\sim\Unif([-h/2,\,h/2))$ we set $\tilde s$ to be the unique element of $[-h/2,\,h/2)$ such that
\[ -\frac{\theta\cdot\C^{-1}\rho}{|\C^{-1/2}\rho|^2}-s\ =\ -\frac{\tilde \theta\cdot\C^{-1}\rho}{|\C^{-1/2}\rho|^2}-\tilde s\quad\text{as elements of $\mathbb R/(h\mathbb Z)$.} \]
In other words, $\tilde s\equiv\frac{(\tilde\theta-\theta)\cdot\C^{-1}\rho}{|\C^{-1/2}\rho|^2}-s \,(\operatorname{mod}h)$ implying $\tilde s\sim\Unif([-h/2,\,h/2))$.
$\sigma=\tilde\sigma$ immediately follows from \eqref{eq:sigma}.

Taking norms in \eqref{eq:goal} yields
\[ |\theta'-\tilde\theta'|_{\C^{-1/2}}\ =\ \bigr|\C^{-1/2}(\theta'-\tilde\theta')\bigr|\ =\ \bigr|\Pi_{\C^{-1/2}\rho}\,\C^{-1/2}(\theta-\tilde \theta)\bigr| \]
so that the general contraction estimate for random projections Lemma \ref{lem:contrproj} finishes the proof:
\begin{align*}
    \W^2_{\C^{-1/2}} & \bigr(\pi_{\infty,u}(\theta, \cdot),\,\pi_{\infty,u}(\tilde \theta, \cdot)\bigr) \\
    \ &\leq\ \Bigr(\mathbb E|\theta'-\tilde \theta'|_{\C^{-1/2}}^2\Bigr)^{1/2}
    \ =\ \Bigr(\mathbb E_{\rho\sim\tau}\bigr|\Pi_{\C^{-1/2}\rho}\,\C^{-1/2}(\theta-\tilde \theta)\bigr|^2\Bigr)^{1/2} \\
    &\leq\ \left(1-\frac12\inf_{|\zeta|=1}\mathbb E_{\rho\sim\tau}\Bigr(\zeta\cdot\frac{\C^{-1/2}\rho}{|\C^{-1/2}\rho|}\Bigr)^2\right)\bigr|\C^{-1/2}(\theta-\tilde \theta)\bigr|\\
    &\leq\ (1-\lambda)|\theta-\tilde \theta|_{\C^{-1/2}}\;.
\end{align*} \end{proof}

\section{Overlap between NURS and Hit-and-Run} \label{sec:hit_and_run}

In this section, we quantify the overlap between uniform-shifted infinite-orbit NURS (Algorithm~\ref{alg:infty_unadjusted}) and generalized Hit-and-Run (Algorithm~\ref{alg:gH&R}).  To do this, we use the total variation distance, a fundamental metric for comparing probability measures, as it captures how much the two measures don't overlap.  

For two probability measures $\eta$ and $\nu$ on $\mathbb{R}^d$, it can be expressed in terms of couplings:
\begin{equation} \label{eq:tv_coupling}
\mathrm{TV}\bigr(\eta, \nu \bigr) \ = \ \inf\,\mathbb P[\theta \neq \tilde{\theta}]
\end{equation}  
where the infimum is taken over all couplings of $\eta$ and $\nu$.  A coupling consists of a pair of random variables $(\theta,\tilde\theta)$ defined on a common probability space such that $\theta\sim\eta$ and $\tilde\theta\sim\nu$.   

Additionally, if $\eta$ and $\nu$ are absolutely continuous  with respect to Lebesgue measure, the total variation distance can also be expressed in terms of their densities: \begin{equation} \label{eq:tv_L1}
\mathrm{TV}\bigr(\eta, \nu \bigr) \ = \ \frac{1}{2} \int_{\mathbb{R}^d} |\eta(x) - \nu(x)| \; dx \;,
\end{equation}
where $\eta(x)$ and $\nu(x)$ are the densities of $\eta$ and $\nu$, respectively.  

The following result provides an upper bound on the total variation distance between the transition kernels of uniform-shifted infinite-orbit NURS and generalized Hit-and-Run, assuming they both start from the same state.  This bound gives a precise measure of how closely the two algorithms update their states at each step.

\begin{theorem} \label{thm:overlap}
Suppose that the target density $\mu$ is differentiable almost everywhere.  For any starting point $\theta \in \mathbb{R}^{d}$ and lattice spacing $h>0$, the total variation distance between the transition kernel of generalized Hit-and-Run $\pi_{\gHR}$ and the transition kernel of uniform-shifted infinite-orbit NURS $\pi_{\infty,u}$ satisfies
\begin{equation}  \label{eq:tv}
\mathrm{TV}\bigr(\pi_{\gHR}(\theta,\cdot),\,  \pi_{\infty,u}(\theta,\cdot)\bigr)\ \le\  2 \, h \,  \int_{\mathbb{R}^d}  | \nabla \log \circ \mu(\theta')| \, \pi_{\gHR}(\theta,d \theta')  \;.
\end{equation}
\end{theorem}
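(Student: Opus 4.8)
The plan is to condition on the shared update direction $\rho$, reduce the kernel‑level total variation distance to a one‑dimensional comparison of scalar displacement laws, compute that one‑dimensional discrepancy exactly, and then control it by the variation of the displacement density along the line.

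First I would record that both kernels have the form $\pi(\theta,\cdot)=\int_{\mathbb S^{d-1}}\tau(d\rho)\,(\Delta_{\theta,\rho})_*\xi_\rho$: for generalized Hit-and-Run the one-dimensional law is $\xi_\rho=\mu_{\theta,\rho}$ with density proportional to $f(t):=\mu(\theta+t\rho)$, while for uniform-shifted infinite-orbit NURS the law $\tilde\xi_\rho$ is that of the displacement $s+T$ with $s\sim\Unif([-h/2,h/2))$ and $T\sim\cat(h\mathbb Z,\mu_{\theta+s\rho,\rho})$. Since total variation is convex in each argument and does not increase under a common (here injective) pushforward $\Delta_{\theta,\rho}$, one gets $\mathrm{TV}(\pi_{\gHR}(\theta,\cdot),\pi_{\infty,u}(\theta,\cdot))\le\int\tau(d\rho)\,\mathrm{TV}(\xi_\rho,\tilde\xi_\rho)$, so it suffices to bound each $\mathrm{TV}(\xi_\rho,\tilde\xi_\rho)$.

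Next I would identify the Lebesgue density of $\tilde\xi_\rho$. Put $S(s):=\sum_{j\in\mathbb Z}f(s+jh)$, which is finite for a.e.\ $s$ because $\int_{-h/2}^{h/2}S=\int_{\mathbb R}f=:Z$. A short computation — for a displacement $d=s+jh$ the shift is forced to equal $s=d_0$, the representative of $d$ modulo $h$ in $[-h/2,h/2)$, and then $T=jh$ is selected with probability $f(d)/S(d_0)$ — shows $\tilde\xi_\rho$ has density $h^{-1}f(d)/S(d_0)$. Folding $\mathbb R$ onto the period cell then gives the exact identity
\[ \mathrm{TV}(\xi_\rho,\tilde\xi_\rho)\ =\ \tfrac12\int_{\mathbb R}f(t)\Bigl|\tfrac1Z-\tfrac1{hS(t_0)}\Bigr|\,dt\ =\ \frac1{2Z}\int_{-h/2}^{h/2}\bigl|S(s)-Z/h\bigr|\,ds, \]
which is the mean absolute deviation of $S$ over the cell, since $Z/h=h^{-1}\int_{-h/2}^{h/2}S$ is precisely the average of $S$ there. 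I would then bound this mean absolute deviation by $h\cdot\operatorname{osc}_{[-h/2,h/2)}S\le h\int_{-h/2}^{h/2}|S'(s)|\,ds\le h\sum_j\int_{-h/2}^{h/2}|f'(s+jh)|\,ds=h\int_{\mathbb R}|f'(t)|\,dt$, and finally, using $|f'(t)|=|\rho\cdot\nabla\mu(\theta+t\rho)|\le|\nabla\mu(\theta+t\rho)|$ and dividing by $f=\mu$, rewrite $\int_{\mathbb R}|f'(t)|\,dt\le Z\int_{\mathbb R}|\nabla \log \circ \mu(\theta+t\rho)|\,\xi_\rho(dt)$. Assembling the pieces and integrating over $\rho\sim\tau$ recognizes the right-hand side as a constant times $h\int_{\mathbb R^d}|\nabla \log \circ \mu(\theta')|\,\pi_{\gHR}(\theta,d\theta')$; this route in fact yields the bound with constant $\tfrac12$, which is stronger than the stated $2$.

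I expect the only genuinely delicate point to be the step $\operatorname{osc}(S)\le\int|S'|$: it requires $S$, equivalently $f$ and hence $\mu$ restricted to $\tau$-a.e.\ line, to be absolutely continuous, whereas the hypothesis only posits differentiability almost everywhere. The fix is to interpret $\int|f'|$ as the total variation $|Df|(\mathbb R)$ of $f$ and to note that the estimate is vacuous unless $\nabla\log\mu$ is $\pi_{\gHR}(\theta,\cdot)$-integrable, a regime in which the needed absolute continuity along lines is available; any loss incurred in passing from the exact density identity to the distributional-derivative bound is precisely what a looser constant such as $2$ would comfortably absorb. Everything else — the convexity/pushforward reduction and the folding identity — is routine bookkeeping.
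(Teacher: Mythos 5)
Your argument is correct, and it takes a genuinely different route from the paper's. Where the paper reduces to a one-dimensional comparison via a maximal coupling of the scalar displacements and then inserts a piecewise-uniform approximation $\tilde\mu_{\theta,\rho}$ as an intermediary, applying the triangle inequality and the auxiliary bound $\mathrm{TV}(\tilde\mu_{\theta+s\rho,\rho},\mu_{\theta,\rho})\le h\int|\nabla\log\circ\mu(\theta+r\rho)|\,\mu_{\theta,\rho}(r)\,dr$ twice (once for each leg, the second leg itself requiring a further triangle-inequality decomposition), you instead compute the Lebesgue density of the uniform-shifted categorical displacement law exactly as $h^{-1}f(d)/S(d_0)$, fold onto the period cell to get the clean identity
\begin{equation*}
\mathrm{TV}(\xi_\rho,\tilde\xi_\rho)\ =\ \frac{1}{2Z}\int_{-h/2}^{h/2}\bigl|S(s)-Z/h\bigr|\,ds\,,
\end{equation*}
and then bound this mean absolute deviation by $h\cdot\operatorname{osc}_{[-h/2,h/2)}S\le h\int_{\mathbb R}|f'|$. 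This is more direct, avoids the auxiliary approximation altogether, and improves the constant from $2$ to $\tfrac12$. Both your oscillation step and the paper's appeal to the fundamental theorem of calculus inside Lemma~\ref{lem:tv_pwu} quietly upgrade the stated hypothesis (differentiability a.e.) to absolute continuity of $\mu$ along almost every line; your remark that integrability of $\nabla\log\mu$ ``makes absolute continuity available'' is not literally true (a Cantor-type component could have zero a.e.\ derivative yet positive variation), but since the paper's own proof needs the same regularity, this is a shared implicit assumption rather than a defect peculiar to your argument.
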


A key ingredient in the proof of this theorem is a piecewise uniform approximation of the scalar displacement measure $\mu_{\theta,\,\rho}$.   To construct this approximation, 
we first define an evenly spaced grid  $\{ t_k = k h \}_{k \in \mathbb{Z}}$ where $h>0$ is the grid spacing. For any $t \in \mathbb{R}$, define the grid-based floor function that maps $t$ to the nearest  grid point less than or equal to $t$, i.e.,  \[
 \lb{t}\ =\ \max \{ k h ~:~ k h \le t \} \;.
\]  
Next, partition $\mathbb{R}$ into subintervals of length $h$, defined as $[t_k^-,t_k^+)$ where
\[
t_k^+-t_k^-\ =\ h \; \quad \text{and} \quad \frac{t_k^+ + t_k^-}{2}\ =\ t_k 
\] for all $k \in \mathbb{Z}$.  On each subinterval, as illustrated in Figure~\ref{fig:pwu}, the piecewise uniform approximation is defined to be constant and proportional to the value of the displacement density at the midpoint of the subinterval.  Specifically, \begin{equation} \label{eq:pwu}
\tilde{\mu}_{\theta,\,\rho}(t)\ =\ \frac{\mu_{\theta,\,\rho}(\lfloor t+h/2 \rfloor_h)}{h Z^{\mathrm{cat}}_{\theta,\,\rho}} 
\end{equation} where we have introduced the normalization constant $Z^{\mathrm{cat}}_{\theta,\,\rho} = \sum_{t \in h \mathbb{Z}} \mu_{\theta,\,\rho}(t)$ for $\cat(h \mathbb{Z}, \mu_{\theta,\,\rho})$.  This normalization constant ensures that $\int_{\mathbb{R}} \tilde{\mu}_{\theta,\,\rho}(t) dt = 1$.  

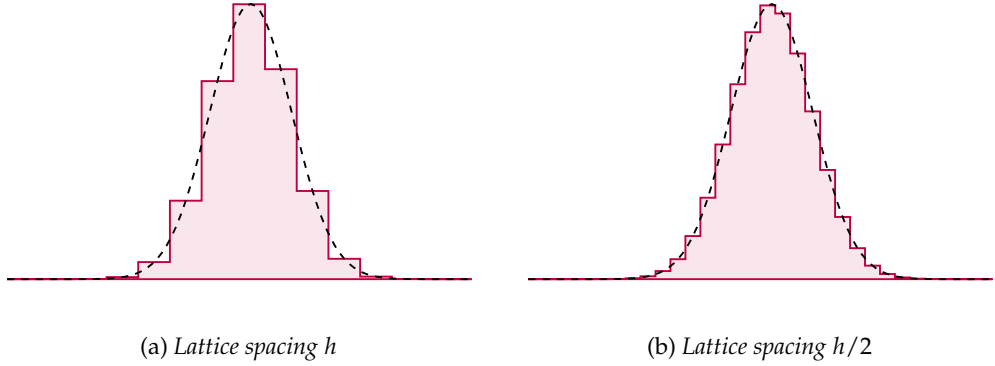
\begin{figure}[t]
\centering
\begin{minipage}{0.48\textwidth}
\centering
\begin{tikzpicture}[scale=0.9]
\begin{axis}[
        xtick=\empty,
        ytick=\empty,
        xticklabel=\empty,
        yticklabel=\empty,
        xlabel={},
        ylabel={},
    xmax=4,ymax=1.2,ymin=-0.2,xmin=-4,
    axis line style={draw=none},
    domain=-4.2:4.2,
    ]

\addplot [draw=purple, fill=purple!10, thick, const plot mark mid, samples=16, domain=-4.2:4]
    {exp(-(x-0.2)^2)}\closedcycle;

\addplot[dashed, thick,domain=-4:4,samples=100]{exp(-(x-0.2)^2)};

\end{axis}
\end{tikzpicture}
\end{minipage}
\begin{minipage}{0.48\textwidth}
\centering
\begin{tikzpicture}[scale=0.9]
\begin{axis}[
        xtick=\empty,
        ytick=\empty,
        xticklabel=\empty,
        yticklabel=\empty,
        xlabel={},
        ylabel={},
    xmax=4,ymax=1.2,ymin=-0.2,xmin=-4,
    axis line style={draw=none},
    domain=-4.2:4.2,
    ]

\addplot [draw=purple, thick, fill=purple!10, const plot mark mid, samples=32, domain=-4:4]
    {exp(-(x-0.2)^2)}\closedcycle;

\addplot[dashed, thick,domain=-4:4,samples=100]{exp(-(x-0.2)^2)};



\end{axis}
\end{tikzpicture}
\end{minipage}
\smallskip
\begin{minipage}{0.48\textwidth}
\centering
{(a) \textit{Lattice spacing $h$}}
\end{minipage}
\begin{minipage}{0.48\textwidth}
\centering
{(b) \textit{Lattice spacing $h/2$}}
\end{minipage}
\caption{\textit{Piecewise uniform approximations of the displacement measure used in Hit-and-Run.}}
\label{fig:pwu}
\end{figure}

\begin{proof}
Given $\rho\sim\tau$, let
\[ T\ \sim\ \mu_{\theta,\,\rho}\quad\text{and}\quad T'\ \sim\ \frac1h\int_{-h/2}^{h/2}\cat\bigr(s+h\mathbb Z, \mu_{\theta,\,\rho}\bigr) \,ds \]
be a maximal coupling.
Then
\[ \theta+T\,\rho\ \sim\ \pi_{\gHR}(\theta,\cdot)\quad\text{and}\quad\theta+T'\,\rho\ \sim\ \pi_{\infty,u}(\theta,\cdot) \]
as $T$ and $T'$ correspond to the scalar displacements by which the respective transitions move along the line away from $\theta$.
Note in particular, that the law of $T'$, in contrast to the distributions inside the integral, is continuously supported.
By the coupling characterization of total variation (see~\eqref{eq:tv_coupling}), maximality of the coupling $(T,T')$, and the triangle inequality,
\begin{align*}
 \mathrm{TV} & \bigr(\pi_{\gHR}(\theta,\cdot),\,  \pi_{\infty,u}(\theta,\cdot)\bigr)
 \  \leq\ \mathbb P(\theta+T\,\rho\ne\theta+T'\,\rho)
 \ \leq\ \mathbb E\,\mathbb P(T\ne T'|\rho) \\
 &\leq\ \mathbb E\,\mathrm{TV}\Bigr(\mu_{\theta,\,\rho},\,\frac1h\int_{-h/2}^{h/2}\cat\bigr(s+h\mathbb Z,\mu_{\theta,\,\rho}\bigr)\,ds\Bigr) \\
 &\leq\ \mathbb E\,\mathrm{TV}(\mu_{\theta,\,\rho},\,\tilde\mu_{\theta,\,\rho})+\mathbb E\,\mathrm{TV}\Bigr(\tilde\mu_{\theta,\,\rho},\,\frac1h\int_{-h/2}^{h/2}\cat\bigr(s+h\mathbb Z,\mu_{\theta,\,\rho}\bigr)\,ds\Bigr)\;.
\end{align*}

The first term on the right hand side satisfies by Lemma~\ref{lem:tv_pwu},
\begin{align*}
    \mathbb E\,\mathrm{TV}(\tilde{\mu}_{\theta,\,\rho},\, \mu_{\theta,\,\rho})\ &\le\  h \; \mathbb E\,\int_{\mathbb{R}} | \nabla \log \circ \mu(\theta + r \rho) |  \;  \mu_{\theta,\,\rho}(r)  \;  dr \\
    \ &\le \ h \,  \int_{\mathbb{R}^d}  | \nabla \log \circ \mu(\theta')| \, \pi_{\gHR}(\theta,d \theta') \;.
\end{align*}

For the second term, let $s\sim\Unif([-h/2,\,h/2))$ and
\[ T\ \sim\ \cat\bigr(h\mathbb Z, \mu_{\theta+s\rho,\,\rho}\bigr)\quad\text{and}\quad \tilde T\ \sim\ \cat\bigr(h\mathbb Z, \mu_{\theta,\,\rho}\bigr) \]
be a maximal coupling.
Then
\[ T+s\ \sim\ \frac1h\int_{-h/2}^{h/2}\cat\bigr(s+h\mathbb Z, \mu_{\theta,\,\rho}\bigr)\,ds\quad\text{and}\quad \tilde T+s\ \sim\ \tilde\mu_{\theta,\,\rho} \]
so that by the coupling characterization of total variation, maximality of the coupling $(T,\tilde T)$, and the fact that the total variation between two discrete probability measures is the $\ell_1$-distance between their probability mass functions (see \eqref{eq:tv_L1}),
\begin{align*}
    &\mathbb E\,\mathrm{TV}\Bigr(\tilde\mu_{\theta,\,\rho},\,\frac1h\int_{-h/2}^{h/2}\cat\bigr(s+h\mathbb Z,\mu_{\theta,\,\rho}\bigr)\,ds\Bigr)
\ \leq\ \mathbb E\,\mathbb P(\tilde T\ne T) \\
&\quad\quad \leq \ \mathbb E\,\mathrm{TV}\Bigr(\cat\bigr(h\mathbb Z, \mu_{\theta,\,\rho}\bigr),\,\cat\bigr(h\mathbb Z, \mu_{\theta+s\rho,\,\rho}\bigr) \Bigr) \\
&\quad\quad\leq\ \frac{1}{2} \mathbb{E}\sum_{t \in h \mathbb{Z}} \left| \frac{\mu_{\theta,\,\rho}(t)}{Z_{\theta,\,\rho}^{\mathrm{cat}}} - \frac{\mu_{\theta+s \rho,\,\rho}(t)}{Z_{\theta+s \rho,\,\rho}^{\mathrm{cat}}} \right|\;.
\end{align*}

By the triangle inequality, \eqref{eq:tv_L1} together with \eqref{eq:pwu}, and Lemma~\ref{lem:tv_pwu}, we further obtain
\begin{align*}
&  \frac{1}{2} \mathbb{E} \left[ \sum_{t \in h \mathbb{Z}} \left| \frac{\mu_{\theta,\,\rho}(t)}{Z_{\theta,\,\rho}^{\mathrm{cat}}} - \frac{\mu_{\theta+s \rho,\,\rho}(t)}{Z_{\theta+s \rho,\,\rho}^{\mathrm{cat}}} \right| \right]   \\
 &\quad \le\  \frac{1}{2} \mathbb{E}  \sum_{t \in h \mathbb{Z}} \left| \frac{\mu_{\theta,\,\rho}(t)}{Z_{\theta,\,\rho}^{\mathrm{cat}}} - \int_{t-h/2}^{t+h/2} \mu_{\theta,\,\rho}(r) dr + \int_{t-h/2}^{t+h/2} \mu_{\theta,\,\rho}(r) dr - \frac{\mu_{\theta+s \rho,\,\rho}(t)}{Z_{\theta+s \rho,\,\rho}^{\mathrm{cat}}}  \right|    \\
 & \quad \le\ \frac{1}{2} \mathbb{E}\sum_{t \in h \mathbb{Z}} \left[ \left| \int_{t-h/2}^{t+h/2} \left( \frac{\mu_{\theta,\,\rho}(t)}{h Z_{\theta,\,\rho}^{\mathrm{cat}}} -  \mu_{\theta,\,\rho}(r)  \right) \; dr \right| + \left| \int_{t-h/2}^{t+h/2} \left( \mu_{\theta,\,\rho}(r)  - \frac{\mu_{\theta+s \rho,\,\rho}(t)}{h Z_{\theta+s \rho,\,\rho}^{\mathrm{cat}}} \right) \; dr \right|  \right]  \\ 
  & \quad \le\ \frac{1}{2} \mathbb{E}\sum_{t \in h \mathbb{Z}} \left[ \int_{t-h/2}^{t+h/2} \left|  \frac{\mu_{\theta,\,\rho}(t)}{h Z_{\theta,\,\rho}^{\mathrm{cat}}} -  \mu_{\theta,\,\rho}(r) \right| \; dr  +  \int_{t-h/2}^{t+h/2} \left| \mu_{\theta,\,\rho}(r)  - \frac{\mu_{\theta+s \rho,\,\rho}(t)}{h Z_{\theta+s \rho,\,\rho}^{\mathrm{cat}}} \right| \; dr   \right]  \\ 
    & \quad \le\ \frac{1}{2} \mathbb{E} \left[  \int_{-\infty}^{\infty} \left|  \frac{\mu_{\theta,\,\rho}(\lfloor t+h/2 \rfloor_h)}{h Z_{\theta,\,\rho}^{\mathrm{cat}}} -  \mu_{\theta,\,\rho}(r) \right| \; dr  +  \int_{-\infty}^{\infty}  \left| \mu_{\theta,\,\rho}(r)  - \frac{\mu_{\theta+s \rho,\,\rho}(\lfloor t+h/2 \rfloor_h)}{h Z_{\theta+s \rho,\,\rho}^{\mathrm{cat}}} \right| \; dr   \right]  \\ 
    & \quad \le\ \frac{1}{2} \mathbb{E} \Bigr[ \mathrm{TV}(\tilde{\mu}_{\theta,\,\rho},\, \mu_{\theta,\,\rho}) + \mathrm{TV}(\tilde{\mu}_{\theta+s \rho,\,\rho},\, \mu_{\theta,\,\rho}) \Bigr] \\
    &\quad\le\ h \; \mathbb E\,\int_{\mathbb{R}} | \nabla \log \circ \mu(\theta + r \rho) |  \;  \mu_{\theta,\,\rho}(r)  \;  dr \;.
\end{align*}
This completes the proof.
\end{proof}

The following lemma provides an explicit bound on the total variation distance between the one-dimensional displacement density $\mu_{\theta,\,\rho}$ and its piecewise uniform approximation $\tilde{\mu}_{\theta,\,\rho}$ from \eqref{eq:pwu} shifted by $s \in [-h/2,h/2]$.

\begin{lemma}
\label{lem:tv_pwu}
Suppose that the target density $\mu$ is differentiable almost everywhere.   Then for any $s \in [-h/2,h/2]$, $\theta \in \mathbb{R}^d$, $\rho \in \mathbb{S}^{d-1}$, and $h>0$, \[
 \mathrm{TV}(\tilde{\mu}_{\theta+s \rho,\,\rho},\, \mu_{\theta,\,\rho})\ \le\  h \; \int_{\mathbb{R}} | \nabla \log \circ \mu(\theta + r \rho) |  \;  \mu_{\theta,\,\rho}(r)  \;  dr \;.
\] 
\end{lemma}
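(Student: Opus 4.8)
The plan is to compare $\mu_{\theta,\,\rho}$ with an \emph{unnormalized} piecewise constant function built from the same grid, and to absorb the normalization error at essentially no extra cost. Write $f=\mu_{\theta,\,\rho}$, a probability density on $\mathbb R$, and note that the displacement density with base point shifted by $s$ is the translate $\mu_{\theta+s\rho,\,\rho}(\cdot)=f(\cdot+s)$, since the normalizing integral $\int\mu(\theta+v\rho)\,dv$ is unchanged. Hence, on each cell $I_k=[kh-h/2,\,kh+h/2)$, the approximation $\tilde\mu_{\theta+s\rho,\,\rho}$ is constant and equal to $f(kh+s)/(h\,Z)$ with $Z=\sum_{j}f(jh+s)$. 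Introducing $\hat f:=\sum_k f(kh+s)\,\mathbf 1_{I_k}$, we then have exactly $\tilde\mu_{\theta+s\rho,\,\rho}=\hat f/\|\hat f\|_{L^1}$ because $\|\hat f\|_{L^1}=hZ$.

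First I would handle the normalization. Since $\hat f\ge 0$, $\int_{\mathbb R}|\hat f-\tilde\mu_{\theta+s\rho,\,\rho}|=\bigl|\|\hat f\|_{L^1}-1\bigr|=\bigl|\int_{\mathbb R}(\hat f-f)\bigr|\le\int_{\mathbb R}|\hat f-f|$, using $\int f=1$. Combined with the triangle inequality $\int|f-\tilde\mu_{\theta+s\rho,\,\rho}|\le\int|f-\hat f|+\int|\hat f-\tilde\mu_{\theta+s\rho,\,\rho}|$, this gives $\mathrm{TV}(\tilde\mu_{\theta+s\rho,\,\rho},\mu_{\theta,\,\rho})=\tfrac12\int|f-\tilde\mu_{\theta+s\rho,\,\rho}|\le\int_{\mathbb R}|f-\hat f|$, so it remains to bound $\int_{\mathbb R}\bigl|f(r)-f(k(r)h+s)\bigr|\,dr$, where $k(r)$ is the index with $r\in I_{k(r)}$.

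Next comes the first-order estimate, cell by cell. For $r\in I_k$ both $r$ and $kh+s$ lie in the closed cell $\overline{I_k}$ (here the hypothesis $s\in[-h/2,h/2]$ is used), so by the fundamental theorem of calculus $|f(r)-f(kh+s)|\le\int_{I_k}|f'(u)|\,du$; integrating over $r\in I_k$ (which has length $h$) and summing over $k$ yields $\int_{\mathbb R}|f-\hat f|\le h\int_{\mathbb R}|f'(u)|\,du$. Finally, $f'(u)=\rho\cdot\nabla\mu(\theta+u\rho)/\!\int\mu(\theta+v\rho)\,dv$, so by Cauchy--Schwarz and $|\rho|=1$, $|f'(u)|\le|\nabla\mu(\theta+u\rho)|/\!\int\mu(\theta+v\rho)\,dv=|\nabla\log\circ\mu(\theta+u\rho)|\,\mu_{\theta,\,\rho}(u)$, which gives the claimed bound after multiplying by $h$.

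The one delicate point is the regularity required to invoke the fundamental theorem of calculus: almost-everywhere differentiability of $\mu$ does not by itself make $u\mapsto\mu(\theta+u\rho)$ absolutely continuous. I would dispose of this by noting that if the right-hand side of the asserted inequality is infinite there is nothing to prove, and otherwise $u\mapsto\mu(\theta+u\rho)$ has an integrable a.e.\ derivative and is (locally) absolutely continuous, so the estimate in the previous paragraph goes through verbatim; equivalently one may simply work with $\mu\in W^{1,1}_{\mathrm{loc}}$, which covers every target of interest. Apart from this, the argument is pure bookkeeping --- tracking the shift $s$ against the cell centers $kh$, and identifying the $\ell^1$ distance of the categorical weights with the $L^1$ distance of the associated piecewise constant densities.
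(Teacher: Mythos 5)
Your proof is correct and takes essentially the same route as the paper's: add and subtract the unnormalized piecewise constant approximant (your $\hat f$ is exactly the paper's $t\mapsto\mu_{\theta+s\rho,\,\rho}(\lfloor t+h/2\rfloor_h)$), absorb the normalization error into the same $L^1$ discrepancy so the factor of $\tfrac12$ cancels, then estimate cell-by-cell via the fundamental theorem of calculus and pass from $|\rho\cdot\nabla\mu|$ to $|\nabla\mu|$ using $|\rho|=1$. Your closing caveat about regularity is fair and applies equally to the paper's proof, but note that ``integrable a.e.\ derivative $\Rightarrow$ locally absolutely continuous'' is false (the Cantor function is a counterexample), so the correct fix is the one you give second: strengthen the hypothesis to $\mu\in W^{1,1}_{\mathrm{loc}}$ or, equivalently, assume $r\mapsto\mu(\theta+r\rho)$ is locally absolutely continuous.
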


\begin{proof}
 Using the triangle inequality, and the definition of the piecewise uniform approximation in \eqref{eq:pwu}, \begin{align}
   &  \mathrm{TV}(\tilde{\mu}_{\theta+s \rho,\,\rho}, \mu_{\theta,\,\rho})  
  \ \overset{\eqref{eq:tv_L1}}{=}\ \frac{1}{2} \int_{-\infty}^{\infty} \left| \tilde{\mu}_{\theta+s \rho,\, \rho}(t)  - \mu_{\theta,\,\rho}(t) \right| dt   \nonumber \\
   & \quad =\ \frac{1}{2}  \int_{-\infty}^{\infty} \left| \frac{\mu_{\theta+s \rho,\, \rho}(\lfloor t+h/2 \rfloor_h)}{h Z^{\mathrm{cat}}_{\theta+s \rho,\, \rho}} - \mu_{\theta+s \rho,\, \rho}(\lfloor t+h/2 \rfloor_h) \right. \nonumber  \\
   & \qquad\qquad\qquad \qquad \left. \vphantom{\frac{\mu_{\theta+s \rho,\, \rho}(\lfloor t+h/2 \rfloor_h)}{h Z^{\mathrm{cat}}_{\theta+s \rho,\, \rho}} } + \mu_{\theta+s \rho, \,\rho}(\lfloor t+h/2 \rfloor_h) - \mu_{\theta,\,\rho}(t) \right| dt   \nonumber \\
   &\quad \le\ \frac{1}{2}  \left| h Z^{\mathrm{cat}}_{\theta+s \rho,\, \rho} - 1  \right|  +  \frac{1}{2} \int_{-\infty}^{\infty}  \left| \mu_{\theta+s \rho, \,\rho}(\lfloor t+h/2 \rfloor_h) - \mu_{\theta,\,\rho}(t) \right| \, dt    \nonumber \\ 
    &\quad  \le\  \int_{-\infty}^{\infty}  \left| \mu_{\theta+s \rho,\, \rho}(\lfloor t+h/2 \rfloor_h) - \mu_{\theta,\,\rho}(t) \right| \, dt \;. \label{ieq:gentv}
    \end{align}

To estimate \eqref{ieq:gentv}, we use the fundamental theorem of calculus, \begin{equation}
\label{eq:expan}
\begin{aligned}
\mu_{\theta,\,\rho}(t) - \mu_{\theta+s \rho,\, \rho}(\lfloor t+h/2 \rfloor_h) \ &=\  \mu_{\theta,\,\rho}(t) - \mu_{\theta,\,\rho}(s+\lfloor t+h/2 \rfloor_h) \nonumber  \\
&=\ \int_{s + \lfloor t + h/2 \rfloor_h}^t \nabla \log \circ \mu (\theta + r \rho) \cdot \rho \; \mu_{\theta,\,\rho}(r) \; dr  \;.
\end{aligned}
\end{equation}  Hence, \eqref{ieq:gentv} can be expressed as \begin{align*}
\mathrm{TV}(\tilde{\mu}_{\theta+s \rho,\,\rho},\, \mu_{\theta,\,\rho})\  &\le\  \int_{-\infty}^{\infty}  \left|  \int_{ s + \lfloor t + h/2 \rfloor_h}^t \nabla \log \circ \mu (\theta + r \rho) \cdot \rho \; \mu_{\theta,\,\rho}(r) \; dr  \right| dt  \\
 &\le\  \int_{-\infty}^{\infty}  \int_{s+ \lfloor t + h/2 \rfloor_h}^t  \left| \nabla \log \circ \mu (\theta + r \rho) \right| \; \mu_{\theta,\,\rho}(r)   \; dr \; dt  \;, 
\end{align*}
where in the last step we used the triangle inequality and $|\rho|=1$.  
Moreover, since $s+ t_k \in [t_k^-, t_k^+)$,   \begin{align}
 \mathrm{TV}(\tilde{\mu}_{\theta+s \rho,\,\rho},\, \mu_{\theta,\,\rho})  \ & \le\  \sum_{k=-\infty}^{\infty} \int_{t_k^-}^{t_k^+}  \int_{s+t_k}^t  \left| \nabla \log \circ \mu (\theta + r \rho) \right| \; \mu_{\theta,\,\rho}(r)   \; dr \; dt  \nonumber \\
& \le\   \sum_{k=-\infty}^{\infty}   \int_{t_k^-}^{t_k^+}  \int_{t_k^-}^{t_k^+} \left| \nabla \log \circ \mu (\theta + r \rho) \right| \; \mu_{\theta,\,\rho}(r)   \; dr \; dt  \nonumber \\
&  \le\   \; h \; \sum_{k=-\infty}^{\infty}   \int_{t_k^-}^{t_k^+} \left| \nabla \log \circ \mu (\theta + r \rho) \right| \; \mu_{\theta,\,\rho}(r)   \; dr\nonumber\\
&=\   \; h \;   \int_{-\infty}^{\infty} \left| \nabla \log \circ \mu (\theta + r \rho) \right| \; \mu_{\theta,\,\rho}(r)   \; dr\;.  \nonumber 
\end{align}
This completes the proof.
\end{proof}

\section{Analysis of NURS in Neal's Funnel}\label{sec:funnel}

In this section, we examine the performance of NURS in the context of Neal's funnel -- a canonical example that highlights the challenges commonly encountered with Bayesian hierarchical models \cite{Neal2003Slice, betancourt2013hamiltonianmontecarlohierarchical}.  We begin by describing Neal's funnel and its key features. We then detail the tuning of NURS.  Next, we  discuss mixing results for RWM and Hit-and-Run reported in the literature.  Using NURS' connection to both methods, we derive theoretical insights into the behavior of NURS and RWM.  Finally, we evaluate NURS empirically in the funnel.

\subsection{Neal's Funnel-shaped Distribution}

Neal's funnel is a distribution defined on $\mathbb R^{d+1}$ with density given by
\begin{equation} \label{eq:original-funnel}
p(\omega, x_1, \dots, x_d)\ =\ \text{normal}(\omega \mid 0,\,9) \prod_{i=1}^d \text{normal}(x_i \mid 0,\,e^{\omega}) \;,
\end{equation}
where $\text{normal}(\,\cdot \mid 0,\,\sigma^2)$ denotes the density of a centered one-dimensional normal with variance $\sigma^2$. 

The geometry of Neal's funnel consists of two distinct regions (see Figure~\ref{fig:funnel}): 
\begin{itemize}
\item The neck: a high-curvature region, with all but one direction tightly concentrated. 
\item The mouth: a broad, flat region in $d$ directions, with one direction at unit scale.  
\end{itemize} This combination creates a globally ill-conditioned target, with large differences in scales between the neck and mouth regions.  These multiple scales present significant challenges for efficient sampling.

To gain intuition into the behavior of NURS and RWM in this geometry, we locally approximate the neck and mouth regions using $(d+1)$-dimensional Gaussian distributions: $\mathcal N(0,\C_{\mathrm{neck}})$ and $\mathcal N(0,\C_{\mathrm{mouth}})$ with covariance matrices defined as:
\begin{equation}\label{eq:pwGaussian}
    \C_{\mathrm{neck}}\ =\ \operatorname{diag}(1,L^{-1},\dots)~~\text{and}\quad\C_{\mathrm{mouth}}\ =\ \operatorname{diag}(1,m^{-1},\dots)~~\text{for}~~ 0<m\ll L\;.
\end{equation}
By studying the behavior of NURS and RWM in these  Gaussian distributions, we can gain theoretical insight into their performance in the more complex geometry of Neal's funnel.

\begin{figure}
\centering
\begin{tikzpicture}[scale=1]
\foreach \sgn in {-,+}
    \draw[line width=1pt] plot[samples=100,domain=-3:3] ({\sgn e^(-\x/2)},\x);
\draw[line width=1pt] ({-e^(3/2)},-3) arc (130:270:0.15) coordinate[pos=1] (end1) {};
\draw[line width=1pt] ({e^(3/2)},-3) arc (40:-90:0.15) coordinate[pos=1] (end2) {};
\draw[line width=1pt] (end1) -- (end2);
\draw[line width=1pt] ({-e^(-3/2)},3) arc (170:10:0.226);
\draw[dashed] (0,-2.6) ellipse (3.3 and 0.5);
\draw[dashed] (0,2.1) ellipse (0.22 and 1);

\node[left] at (-0.5,2.55) {\large Neck};
\node[left] at (-3.5,-2) {\large Mouth};

\node[right] at (0.35,2.5) {$\mathcal N(0,\C_{\mathrm{neck}})$};
\node[right] at (3,-2) {$\mathcal N(0,\C_{\mathrm{mouth}})$};

\draw[->] (-4,0) -- ({1.5-4},0) node[pos=1,below right] {$\mathbb R^d$};
\draw[->] (-4,0) -- (-4,1.5) node[pos=1,above left] {$\mathbb R$};

\draw[|-|] (-0.22,3.5) -- (0.22,3.5) node[pos=0.5,above] {$L^{-1/2}$};
\draw[|-|] (-3.3,-3.7) -- (3.3,-3.7) node[pos=0.5,below] {$m^{-1/2}$};

\end{tikzpicture}
\caption{\textit{Locally approximating neck and mouth of Neal's funnel by Gaussian distributions.}}
\label{fig:funnel}
\end{figure}
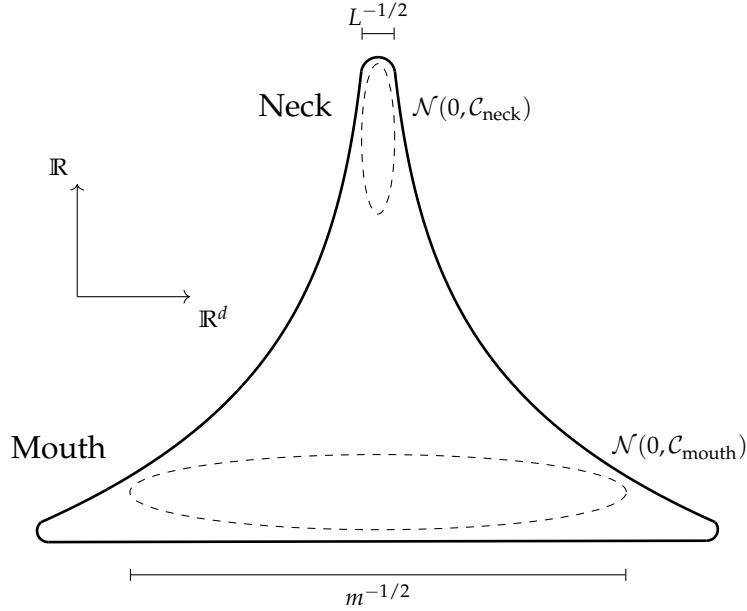

\subsection{Tuning of NURS and RWM}

Sampling methods can, in principle, be made invariant to local changes in geometry by locally adapting their transitions to the structure of the target distribution. However, achieving this in practice requires detailed knowledge about the target distribution (e.g., curvature information) that is typically computationally expensive to obtain.  While NURS, like RWM, can theoretically incorporate local preconditioning (e.g., by locally adapting the direction distribution $\tau$ to reflect the local geometry), we leave such extensions to future work.   In the following analysis, we focus on NURS using a uniform direction distribution, i.e, $\tau=\Unif(\mathbb S^{d-1})$.  Nevertheless, NURS  incorporates a form of local adaptation through its No-Underrun-based orbit selection. As we will show, this local adaptation reduces sampling complexity while relying only on information readily available during sampling.

For the sake of comparison, we assume that both NURS and RWM are tuned prior to sampling using information obtained during a pre-processing or warm-up phase. Specifically, the  lattice spacing for  NURS and step size for RWM are tuned to ensure that the Metropolis acceptance probabilities are lower-bounded in the bulk of the target distribution. Following the tuning guidelines derived in Section \ref{sec:RWM}, and considering the local Gaussian approximations to Neal's funnel described in \eqref{eq:pwGaussian}, the required lattice spacing for NURS and step size for RWM both scale as $h=O(L^{-1/2})$.   Since the precise value of $L$ might not be known in practice, we set $h=\varsigma L^{-1/2}$, where $\varsigma\leq1$ acts as a safety factor  to reduce the risk of exponential slowing caused by an excessively large $h$.

\subsection{Mixing of RWM}  Using conductance methods, Ref.~\cite{andrieuAAP} derive mixing time bounds for RWM.
This approach combines two components: (i) controlling the Metropolis step to reduce the dynamics to the underlying random walk and (ii) estimating bottlenecks along the random walk's path.
As a result, RWM, with the step size $h=\varsigma L^{-1/2}$, achieves a mixing rate $\lambda$ of order
\begin{equation}\label{eq:RWMrate}
\lambda\ \asymp\ \varsigma^2(L d)^{-1}\quad\text{in}\quad\mathcal N(0,\C_{\mathrm{neck}})\;,\quad\text{and}\quad \lambda\ \asymp\ \varsigma^2(\kappa d)^{-1}\quad\text{in}\quad\mathcal N(0,\C_{\mathrm{mouth}})
\end{equation}
where $\kappa=L/m$.  RWM makes small, local steps uniformly throughout the funnel, resulting in diffusive mixing.  In the larger mouth region, mixing is slower than in the neck (see Figure~\ref{fig:funnel_steps} (b)).

\subsection{Mixing of Hit-and-Run}

Via coupling methods, Ref.~\cite{BoEbOb2024} analyze the mixing properties of Hit-and-Run in Gaussian distributions.\footnote{Ref.~\cite{ascolani2024} shows relative entropy contraction for Hit-and-Run obtaining the rate in the neck but not the speed-up in the mouth.}
The results show that Hit-and-Run mixes with rates
\begin{equation}\label{eq:HRrate_general}
    \lambda\ \asymp\ (Ld)^{-1}~~\text{in}~~\mathcal N(0,\C_{\mathrm{neck}})\;,~~\text{and}~~(m^{-1/2}d)^{-1}\ \lesssim\ \lambda\ \lesssim\ d^{-1}\quad\text{in}~~\mathcal N(0,\C_{\mathrm{mouth}})\;,
\end{equation}
where $x\lesssim y$ iff $x\leq Cy$ for some $C>0$.  In the neck, similarly to RWM, Hit-and-Run mixes diffusively (see Figure~\ref{fig:funnel_steps} (a)).  In the mouth, the smaller rate is realized in low dimension, $d=2,3$, and describes ballistic mixing, an ability of Hit-and-Run uncovered in \cite{BoEbOb2024}.  For higher dimension, the convergence rate improves further towards the rate in isotropic distributions.

Unlike RWM, Hit-and-Run mixes faster in the mouth region than in the neck region. This improvement is due to Hit-and-Run's ability to make larger moves in the flat mouth region, whereas RWM is limited to the same local moves it employs in the neck region (see Figure \ref{fig:funnel_steps}).
Since NURS operates in the Hit-and-Run regime in the mouth region (see Figure \ref{fig:regimes} (a)), it inherits this advantage, leading to a quantitative speed-up compared to RWM in this region.

\begin{figure}
\centering
\begin{minipage}{0.48\textwidth}
\centering
\begin{tikzpicture}[scale=.72]
\clip (-5.5,-4) rectangle (5.5,3.5);
\foreach \sgn in {-,+}
    \draw[line width=1pt] plot[samples=100,domain=-3:3] ({\sgn e^(-\x/2)},\x);
\draw[line width=1pt] ({-e^(3/2)},-3) arc (130:270:0.15) coordinate[pos=1] (end1) {};
\draw[line width=1pt] ({e^(3/2)},-3) arc (40:-90:0.15) coordinate[pos=1] (end2) {};
\draw[line width=1pt] (end1) -- (end2);
\draw[line width=1pt] ({-e^(-3/2)},3) arc (170:10:0.226);

\draw[gray,shift={(0,-5.5)}] (0.75,8.6) -- (-0.75,8.1);
\draw[gray,shift={(0,-5.5)}] (-0.75,8.5) -- (0.75,8.0);
\draw[gray,shift={(0,-5.5)}] (0.45,8.3) -- (-0.95,7.85);

\draw[shift={(0,-5.5)}] (0.15,8.4) -- (-0.15,8.3) -- (0.15,8.2) -- (-0.15,8.1);
\filldraw[shift={(0,-5.5)}] (0.15,8.4) circle (1pt);
\filldraw[shift={(0,-5.5)}] (-0.15,8.3) circle (1pt);
\filldraw[shift={(0,-5.5)}] (0.15,8.2) circle (1pt);
\filldraw[shift={(0,-5.5)}] (-0.15,8.1) circle (1pt);

\draw[gray] plot[samples=100,domain=-3:4] ({-0.15+\x},{-2.9+\x/3});
\draw[gray] plot[samples=100,domain=-5:3] ({-0.15+2+\x},{-2.9+2/3-\x/3});
\draw[gray] plot[samples=100,domain=-5:3] ({-0.15+2-2+\x},{-2.9+2/3+2/3+\x/3});

\filldraw (-0.15,-2.9) circle (1pt);
\filldraw ({-0.15+2},{-2.9+2/3}) circle (1pt);
\filldraw ({-0.15+2-2},{-2.9+2/3+2/3}) circle (1pt);
\filldraw ({-0.15+2-2-2.5},{-2.9+2/3+2/3-2.5/3}) circle (1pt);

\end{tikzpicture}
\caption*{(a) \textit{NURS}}
\end{minipage}
\begin{minipage}{0.48\textwidth}
\centering
\begin{tikzpicture}[scale=0.72]
\clip (-5.5,-4) rectangle (5.5,3.5);
\foreach \sgn in {-,+}
    \draw[line width=1pt] plot[samples=100,domain=-3:3] ({\sgn e^(-\x/2)},\x);
\draw[line width=1pt] ({-e^(3/2)},-3) arc (130:270:0.15) coordinate[pos=1] (end1) {};
\draw[line width=1pt] ({e^(3/2)},-3) arc (40:-90:0.15) coordinate[pos=1] (end2) {};
\draw[line width=1pt] (end1) -- (end2);
\draw[line width=1pt] ({-e^(-3/2)},3) arc (170:10:0.226);

\draw[shift={(0,-11)}] (0.15,8.4) -- (-0.15,8.3) -- (0.15,8.2) -- (-0.15,8.1);
\filldraw[shift={(0,-11)}] (0.15,8.4) circle (1pt);
\filldraw[shift={(0,-11)}] (-0.15,8.3) circle (1pt);
\filldraw[shift={(0,-11)}] (0.15,8.2) circle (1pt);
\filldraw[shift={(0,-11)}] (-0.15,8.1) circle (1pt);

\draw[shift={(0,-5.5)}] (0.15,8.4) -- (-0.15,8.3) -- (0.15,8.2) -- (-0.15,8.1);
\filldraw[shift={(0,-5.5)}] (0.15,8.4) circle (1pt);
\filldraw[shift={(0,-5.5)}] (-0.15,8.3) circle (1pt);
\filldraw[shift={(0,-5.5)}] (0.15,8.2) circle (1pt);
\filldraw[shift={(0,-5.5)}] (-0.15,8.1) circle (1pt);

\end{tikzpicture}
\caption*{(b) \textit{RWM}}
\end{minipage}
\caption{\textit{Three transitions of NURS and RWM in both the neck and mouth regions of Neal's funnel.  NURS leverages its orbits (gray), which are locally adapted to the scale of the funnel, enabling it to achieve larger moves in the mouth region.  This yields a considerable speed-up of NURS over RWM both in terms of total complexity, as well as number of sequential computations.
}}
\label{fig:funnel_steps}
\end{figure}

\subsection{NURS vs. RWM in the funnel}

We now analyze the computational complexities for NURS and RWM in the two regions of the funnel.  A reasonable measure of complexity is the asymptotic number of evaluations of the potential $U=-\log\circ\mu$ required to achieve constant sampling accuracy.
For both methods the total complexity is the product of the number of transitions and the complexity per transition.
The number of transitions is given by the inverse of the mixing rate.
The complexity per transition is $1$ potential evaluation for RWM, while for NURS, it depends on the number of states in the orbit selected according to the No-Underrun condition.
The orbit length scales with the local scale of the bulk which is $L^{-1/2}$ in the neck and $m^{-1/2}$ in the mouth (see Figure~\ref{fig:funnel}). Dividing the orbit length  by the lattice spacing $h=\varsigma L^{-1/2}$, the number of states is then of order $\varsigma^{-1}$ in the neck and $\varsigma^{-1}\kappa^{1/2}$ in the mouth.
This reflects a larger per-transition complexity of NURS in the mouth compared to the neck, due to longer orbits adapted to the local scale of the target (see Figure \ref{fig:funnel_steps}).
The resulting projected complexities for NURS and RWM in both regions are summarized in Table \ref{tab:funnel}.

\begin{table}[ht]
\centering
\begin{tabular}{||c||ccccc||ccccc||}
\cline{2-11}
\multicolumn{1}{c||}{} & \multicolumn{5}{|c||}{\textbf{NURS}} & \multicolumn{5}{|c||}{\textbf{RWM}} \\
\multicolumn{1}{c||}{} & total & $=$ & non-par. & $\times$ & parallel & total & $=$ & non-par. & $\times$ & parallel \\
\hline
\textbf{Neck} & $\varsigma^{-1}\,L\phantom{^{-1/2}\,\kappa^{1/2}}\,d$ & $=$ & $L\phantom{^{-1/2}}\,d$ & $\times$ & $\varsigma^{-1}\,\phantom{\kappa^{1/2}}$ & $\varsigma^{-2}\,L\,d$ & $=$ & $\varsigma^{-2}\,L\,d$ & $\times$ & $1$ \\
\hline
\textbf{Mouth} & $\varsigma^{-1}\,m^{-1/2}\,\kappa^{1/2}\,d$ & $=$ & $m^{-1/2}\,d$ & $\times$ & $\varsigma^{-1}\,\kappa^{1/2}$ & $\varsigma^{-2}\,\kappa\,d$ & $=$ & $\varsigma^{-2}\,\kappa\,d$ & $\times$ & $1$ \\
\hline
\end{tabular}
\caption{ \textit{Projected complexities  of NURS vs.~RWM, measured in terms of the number of potential evaluations needed to achieve a predefined accuracy in the mixing time, with $h=\varsigma L^{-1/2}$ based on the local Gaussian approximations \eqref{eq:pwGaussian}.  The total complexities are divided into their parallelizable and non-parallelizable fractions.  The displayed complexities for NURS in the mouth correspond to the low dimensional case and further improve by up to a factor of $m^{1/2}$ in high dimension.}}
\label{tab:funnel}
\end{table}

From Table \ref{tab:funnel}, we observe that NURS achieves a parallelizable ballistic complexity in the inverse safety factor $\varsigma^{-1}$, whereas RWM remains non-parallelizable and exhibits diffusive scaling in this parameter. (Here ``ballistic'' refers to complexity proportional to $\varsigma^{-1}$, while ``diffusive'' refers to a complexity proportional to $(\varsigma^{-1})^2$.)  For well-tuned $h$, the following performance patterns are anticipated:
\begin{enumerate}
\item In the high-curvature neck region, NURS and RWM share comparable performance.
\item In the flat mouth region, NURS considerably outperforms RWM both in terms of total complexity and the number of sequential computations required.
\end{enumerate}
These results suggest that NURS has the potential to outperform RWM in multi-scale target distributions like Neal's funnel.

\subsection{Empirical Evaluation}

We evaluate NURS numerically in Neal's funnel-shaped distribution \eqref{eq:original-funnel} in moderate dimension $d=10$.
In the simulations, we fix the threshold $\epsilon=0$, effectively disabling the No-Underrun condition. The $2^{M}$ log densities within each orbit are evaluated in parallel in a single batch.

\paragraph*{Choice of $h$ and $M$}
The lattice spacing tuning guidance developed in Section~\ref{sec:RWM} suggests $h=O(L^{-1/2})$ where $L^{-1/2}$ represents the most restrictive scale of the target.
In the funnel, the most restrictive scales occur orthogonal to the funnel axis within the neck, becoming increasingly concentrated for negative values of $\omega$.
To estimate the most restrictive scale relevant to simulation, i.e., within the bulk of the funnel, we consider the first $0.001$-quantile of the funnel axis marginal, which lies approximately at $\omega=-9$.
The scale, being comparable to the standard deviations $e^{\omega/2}$ in the remaining directions, is of the order $0.01$ at this $\omega$.  The tuning guidelines therefore suggest choosing $h$ on the order of $0.01$.
As shown in Figure~\ref{fig:nur_stats_omega} (b), this choice is reasonable, as the Metropolis acceptance rate, for larger $h$, starts to degenerate in the neck.

We pick $M=14$ leading to orbits of length $2^M h$ due to the disabled No-Underrun condition.  As shown in Figure~\ref{fig:nur_stats_omega} (a), the jump distances level off in the mouth which shows that the orbit length is too short for the algorithm to realize its full potential of making larges moves adapted to the target distribution's local scale.  This illustrates the importance of local orbit-length adaptation via the No-Underrun condition: The fixed-length orbits waste computational resources in the neck while slowing space exploration in the mouth.

Figure~\ref{fig:nurs_scatter} presents a scatter plot of the samples obtained from running $10^7$ iterations, while Figure~\ref{fig:nur_stats_omega} (c) shows the corresponding histogram of the funnel axis variable $\omega$. The results indicate that the samples closely align with the funnel distribution. Further, implementing the No-Underrun-based orbit-length adaptation will considerably increase efficiency.

\begin{figure}
\centering
\begin{minipage}{0.32\textwidth}
\centering
\includegraphics[scale=0.38]{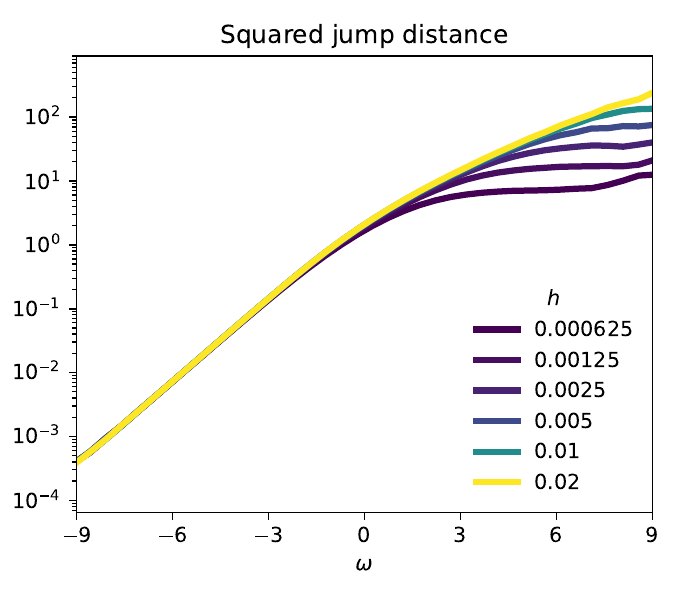}
\vspace{-15pt}
\caption*{(a)}
\end{minipage}
\begin{minipage}{0.32\textwidth}
\centering
\includegraphics[scale=0.38]{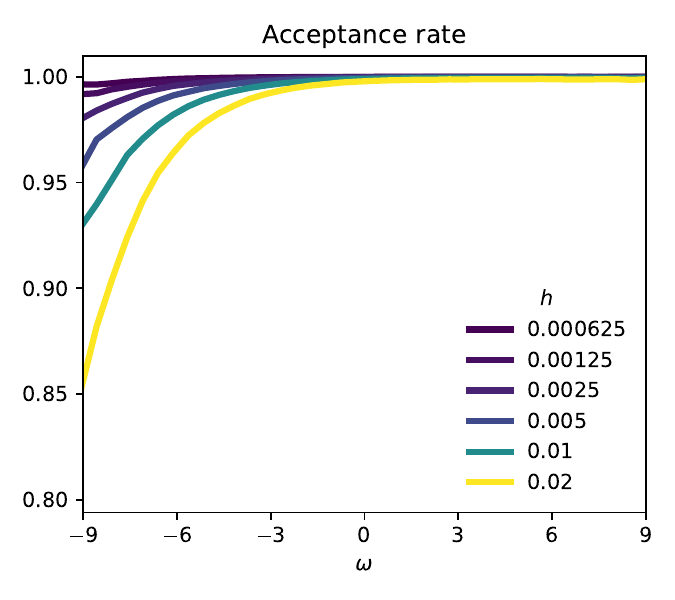}
\vspace{-15pt}
\caption*{(b)}
\end{minipage}
\begin{minipage}{0.32\textwidth}
\centering
\vspace{15pt}
\includegraphics[scale=0.48]{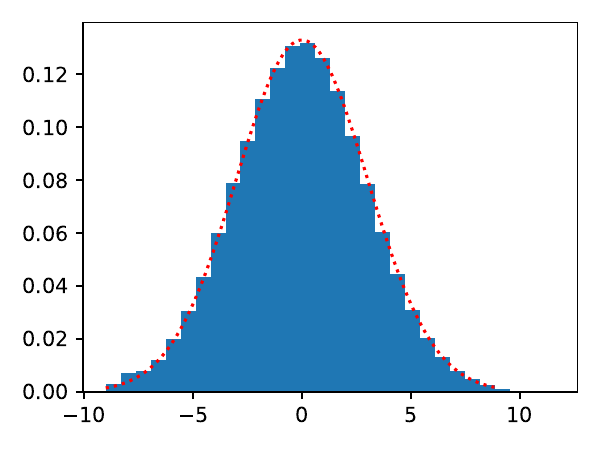}
\vspace{-25pt}
\caption*{(c)}
\end{minipage}
\caption{\textit{\textrm{(a)} Squared jump distances (on a log scale) between consecutive states versus $\omega$; \textrm{(b)} Metropolis acceptance rates versus $\omega$; \textrm{(c)} Histogram of the funnel axis variable $\omega$.}}
\label{fig:nur_stats_omega}
\end{figure}

\begin{figure}
\centering
\includegraphics[scale=.8]{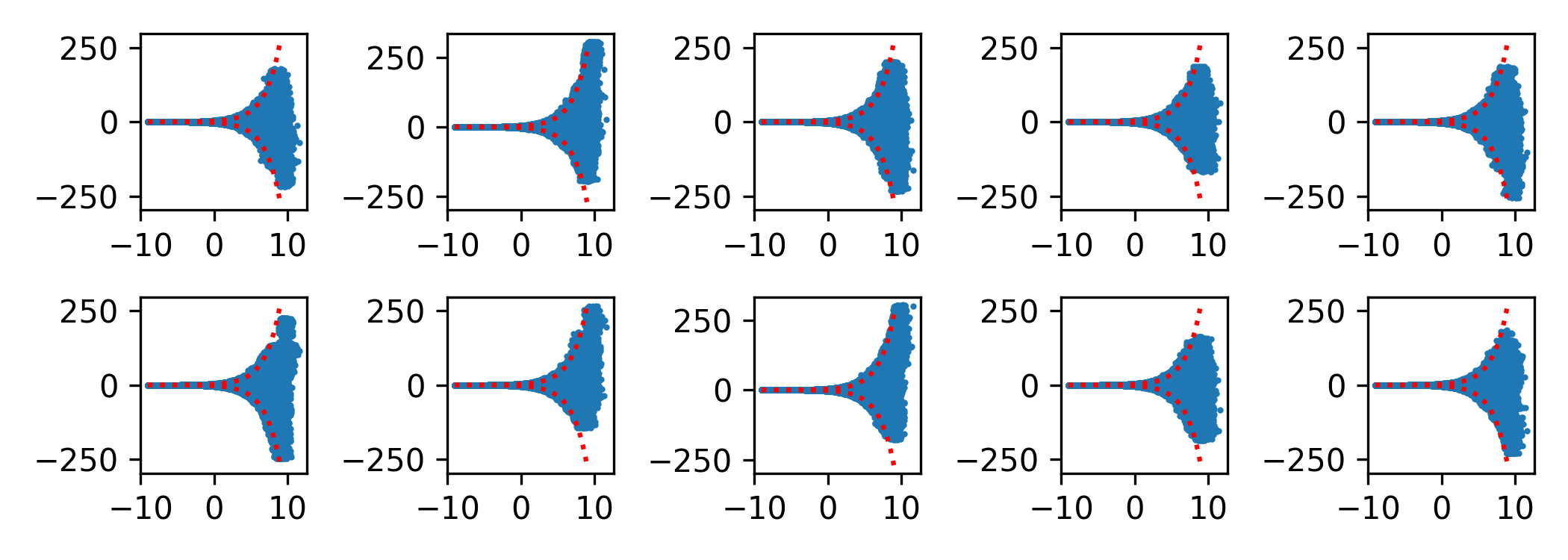}  
\caption{\textit{Scatter plots in the plane defined by the funnel axis variable $\omega$ on the horizontal axis and each of the remaining ten variables on the vertical axis. The red dotted lines represent $\pm3 e^{\omega/2} $.}}
\label{fig:nurs_scatter}
\end{figure}

\section{Proof of Reversibility} \label{sec:reversibility}

Here we present a self-contained proof of the reversibility of NURS (Algorithm~\ref{alg:nurs}) with respect to the target distribution $\mu$.  It follows the ideas developed in \cite{BouRabeeCarpenterMarsden2024} interpreting the transition of NURS as the marginal of a transition on an enlarged space comprising all auxiliary variables used in a NURS transition, such as direction $\rho$ and orbit $\mathcal O$.  We first establish that the transition on this extended space is reversible with respect to a natural distribution whose marginal is $\mu$.  This, in turn, implies that the marginal transition corresponding to NURS is itself reversible with respect to the marginal distribution $\mu$.

\begin{proof}[Proof of Theorem~\ref{thm:NURS_reversibility}]
Since the initial Metropolis-adjusted shift in Algorithm~\ref{alg:nurs} is reversible by design, we focus on the remaining steps.  To analyze their reversibility, we first identify the auxiliary variables involved:
NURS samples a direction $\rho\sim \tau$ and an orbit $\calO\sim p_{\mathrm{Orbit}}(\,\cdot \mid \theta,\rho)$. 
Since the orbit is constructed via the doubling procedure, the orbit size is a power of 2: $|\calO|=2^\ell$ where $\ell\in\mathbb N$. Denote the right endpoint of $\calO$ by $\theta+\rho h R$ for $R\in\mathbb N$. Given $(\theta,\rho)$, the orbit $\calO$ is uniquely determined by its size and its right endpoint. Therefore, we use $\calO$ and $(\ell,R)$ interchangeably, and write $(\ell,R)\sim p_{\orbit}(\,\cdot\mid\theta,\rho)$.
Given an orbit specified by $(\theta,\rho,\ell,R)$, an index $i$ is sampled, whose density with respect to counting measure on $ \mathbb Z$ is given by
\begin{align*}
	p_{\mathrm{Index}}(i\mid \theta,\rho, \ell,R)\ \propto\ \mu(\theta+\rho hi)\,\indc{R-2^\ell+1\leq i\leq R }\;.
\end{align*}
The indicator function ensures that $i$ is chosen within the valid index range. Altogether, the space containing the state $\theta$ and all auxiliary variables $\rho, \ell, R, i$ is \[
\mathcal Z=\R^d\times\R^d\times\mathbb{N}\times \mathbb{N}\times \mathbb{Z}
\] with elements denoted as $z=(\theta,\rho, \ell, R, i)$.  The canonical measure $\zeta$ on $\mathcal Z$ is the product of Lebesgue measure in the first two components and the counting measure on the latter three.
We define an enlarged target distribution on $\mathcal Z$ with a density, relative to $\zeta$, given by
\begin{align*}
	p_{\mathrm{joint}}(z)\ \propto\ \mu(\theta)\cdot\tau(\rho)\cdot p_{\mathrm{Orbit}} (\ell,R\mid\theta,\rho)\cdot  p_{\mathrm{Index}}(i\mid \theta,\rho, \ell,R)\;.
\end{align*}
In the enlarged space $\mathcal Z$, consider the map $\Psi:\mathcal Z\to\mathcal Z$ defined by
\[ \Psi(z)\ =\ (\theta+\rho h i,\,\rho,\,\ell,\,R-i,\,-i)\;. \]
The NURS transition $\theta\mapsto\theta'=\theta+\rho h i$ corresponds to the first component of $\Psi$.  Additionally, the marginal distribution of $p_{\mathrm{joint}}(z) \,\zeta(\rd z)$ in the first component equals the target distribution $\mu$.
Therefore, reversibility of NURS with respect to the target $\mu$ follows from reversibility of $\Psi$ with respect to $p_{\mathrm{joint}}(z) \,\zeta(\rd z)$.
This however can be checked via a direct computation using:
\begin{lemma}\label{lem:Psi}
    The map $\Psi$ is a $\zeta$-preserving involution, i.e., it holds that $\Psi\circ\Psi=\mathrm{id}$, such that $p_{\mathrm{joint}}\circ\Psi=p_{\mathrm{joint}}$.
\end{lemma}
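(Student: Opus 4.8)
Here is how I would prove Lemma~\ref{lem:Psi}.

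The plan is to check the three assertions — $\Psi\circ\Psi=\mathrm{id}$, $\zeta$-invariance, and $p_{\mathrm{joint}}\circ\Psi=p_{\mathrm{joint}}$ — separately, the last being the substantive one. The involution property is a one-line computation: applying $\Psi$ twice to $z=(\theta,\rho,\ell,R,i)$ returns $(\theta+\rho hi+\rho h(-i),\rho,\ell,(R-i)+i,i)=z$. (On the support of $p_{\mathrm{joint}}$ the index constraint $R-2^\ell+1\le i\le R$ forces $0\le R-i\le 2^\ell-1$, so $\Psi$ maps this set into $\mathcal Z$; elsewhere $p_{\mathrm{joint}}$ vanishes.) For $\zeta$-invariance I would test against an arbitrary nonnegative measurable $f$: writing $\int f\circ\Psi\,d\zeta$ as a sum over the discrete coordinates of integrals over $\R^d\times\R^d$, for each fixed $(\ell,R,i)$ the continuous map $(\theta,\rho)\mapsto(\theta+\rho hi,\rho)$ is a translation in $\theta$ by a vector not depending on $\theta$, hence unimodular; relabelling the discrete sum by the bijection $(\ell,R,i)\mapsto(\ell,R-i,-i)$ then recovers $\int f\,d\zeta$.

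For $p_{\mathrm{joint}}\circ\Psi=p_{\mathrm{joint}}$, the key elementary observation is that $\Psi$ leaves the \emph{orbit} unchanged and only relabels its base point: the orbit attached to $z$ is $\calO=\{\theta+\rho hk:R-2^\ell+1\le k\le R\}$, and the orbit attached to $\Psi(z)=(\theta+\rho hi,\rho,\ell,R-i,-i)$ — base point $\theta+\rho hi$, size $2^\ell$, right-endpoint index $R-i$ — is the \emph{same set} $\calO$. Since the normalizing constant of $p_{\mathrm{Index}}$ equals $\sum_{\breve\theta\in\calO}\mu(\breve\theta)$ and $\tau(\rho)$ is also untouched, the identity reduces to two points: (a) $\mu(\theta)\,\mu(\theta+\rho hi)=\mu(\theta+\rho hi)\,\mu((\theta+\rho hi)+\rho h(-i))$, which is trivial; and (b) $p_{\mathrm{Orbit}}(\ell,R\mid\theta,\rho)=p_{\mathrm{Orbit}}(\ell,R-i\mid\theta+\rho hi,\rho)$, i.e.\ the probability that the doubling procedure builds a given orbit is the same from every base point lying in that orbit.

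To establish (b) I would fix the orbit $\calO$ (size $2^\ell$) and a base point $\vartheta_0\in\calO$, and show that $p_{\mathrm{Orbit}}(\calO\mid\vartheta_0,\rho)=2^{-\ell}\,\Indc{\vartheta_0\in\calO}\,\Indc{\text{no proper sub-orbit of }\calO\text{ meets }\eqref{eq:no-Underrun}}\,g(\calO)$, with $g(\calO)$ independent of $\vartheta_0$. First, if the final orbit equals $\calO$ then, peeling off the last doubling, each $\calO_k$ in the building chain $\{\vartheta_0\}=\calO_0\subset\cdots\subset\calO_\ell=\calO$ is an aligned half of $\calO_{k+1}$, hence a sub-orbit of $\calO$ in the sense of \eqref{eq:sub-orbits} — namely the unique one of size $2^k$ containing $\vartheta_0$. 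The direction bits $B_0,\dots,B_{\ell-1}$ realizing this chain are therefore forced (the remaining bits being free), contributing the factor $2^{-\ell}$. Second, the No-Underrun stopping condition \eqref{eq:no-Underrun} and the derived sub-stopping condition depend on an orbit only through the values of $\mu$ at its points; so, once $\calO$ is reached, termination exactly there occurs iff $\calO$ satisfies \eqref{eq:no-Underrun}, or $\ell=M$, or (for $\ell<M$) the extension of $\calO$ in the remaining free direction $B_\ell$ satisfies the sub-stopping condition — averaging over $B_\ell$ gives a quantity $g(\calO)$ depending only on $\calO$. The delicate point is ``no early stopping'', and here the sub-stopping condition is precisely what makes the statement base-point-free: I claim the procedure (run from $\vartheta_0$ with the forced bits) stops before reaching $\calO$ \emph{if and only if} some proper sub-orbit of $\calO$ satisfies \eqref{eq:no-Underrun}. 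One implication is immediate, since every $\calO_j$ with $1\le j\le\ell-1$ and every extension $\calO_k^{\ext}$ with $0\le k\le\ell-1$, together with all of their sub-orbits, are proper sub-orbits of $\calO$. For the converse, given a proper sub-orbit $D$ of $\calO$ meeting \eqref{eq:no-Underrun}, a short case distinction on the position of $\vartheta_0$ relative to $D$ and its dyadic ancestors in $\calO$ shows that $D$ either equals some $\calO_j$ with $1\le j\le\ell-1$, or is contained in some $\calO_k^{\ext}$ with $0\le k\le\ell-1$ — the latter because the minimal sub-orbit of $\calO$ containing both $D$ and $\vartheta_0$ has $D$ sitting in the half that does not contain $\vartheta_0$ — so the procedure terminates early in either case. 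Collecting the three factors gives (b), hence the lemma.

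The main obstacle is exactly that combinatorial claim in the last step: that the sub-stopping condition promotes ``no early stopping'' from a property of the particular building chain out of $\vartheta_0$ to a property of the set $\calO$ alone. The rest — the change of variables underlying $\zeta$-invariance, and the cancellation of the $\mu$-factors and normalizing constants using the orbit-invariance of $\Psi$ — is routine bookkeeping.
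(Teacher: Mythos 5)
Your proof is correct and follows the paper's overall strategy: check the involution directly, obtain $\zeta$-invariance from translation/reflection invariance, and reduce $p_{\mathrm{joint}}\circ\Psi=p_{\mathrm{joint}}$ to the orbit symmetry $p_{\mathrm{Orbit}}(\ell,R\mid\theta,\rho)=p_{\mathrm{Orbit}}(\ell,R-i\mid\theta+\rho h i,\rho)$, which both you and the paper establish by observing that the doubling bits are forced by the orbit's endpoints and that the stopping/sub-stopping conditions are functions of the orbit alone. Where you add genuine content is exactly at the ``delicate point'' you flag. The paper's displayed probability $p_{\mathrm{Orbit}}(\calO\mid\theta,\rho)=2^{-\ell}\cdot\mathbf 1\{\cdots\}$ does not make the \emph{reach}-$\calO$ (no-early-stopping) constraint explicit; it is folded informally into remarks such as ``$\calO$ does not satisfy the sub-stopping condition'', which for $\ell=M$ is in fact slightly too strong, since the procedure terminates at the $M$-th doubling regardless of whether $\calO$ itself satisfies \eqref{eq:no-Underrun}. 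Your argument supplies the missing combinatorial step: early stopping along the building chain $\{\vartheta_0\}=\calO_0\subset\cdots\subset\calO_\ell=\calO$ happens if and only if some \emph{proper} sub-orbit of $\calO$ satisfies \eqref{eq:no-Underrun}, because every proper sub-orbit is either a dyadic ancestor $\calO_j$ of $\vartheta_0$ with $1\le j\le\ell-1$, or sits inside some extension $\calO_k^{\ext}$ with $0\le k\le\ell-1$ (the half of the minimal common ancestor of $D$ and $\vartheta_0$ that avoids $\vartheta_0$). This makes the no-early-stopping indicator manifestly a function of $\calO$ alone, which is precisely what the orbit symmetry \eqref{equ: orbit symmetry} requires. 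So the route is the paper's, but with the implicit invariance of the early-stopping constraint under change of base point made explicit and justified — a real improvement in rigor over the published argument.
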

Specifically, for any measurable sets $A,B\subset\mathcal Z$, we have
\begin{align*}
    \int_{\mathcal Z} \Indc{z\in A, \Psi(z)\in B}\,p_{\mathrm{joint}}(z)\,\zeta(\rd z)
    \ &=\ \int_{\mathcal Z} \Indc{\Psi\circ\Psi(z)\in A, \Psi(z)\in B }\,p_{\mathrm{joint}}(z) \,\zeta(\rd z) \\
    &=\ \int_{\mathcal Z} \Indc{\Psi(z)\in A, z \in B }\,p_{\mathrm{joint}}\circ\Psi(z)\, \zeta(\rd z)\\
    &=\ \int_{\mathcal Z} \Indc{z\in B, \Psi(z)\in A }\,p_{\mathrm{joint}}(z)\, \zeta(\rd z)\;,
\end{align*}
finishing the proof.
\end{proof}

\begin{proof}[Proof of Lemma~\ref{lem:Psi}]
The map $\Psi$ is an involution, since for all $z\in\mathcal Z$,
\begin{align*}
	\Psi\circ\Psi(z)\ =\ \Psi(\theta+\rho h i, \rho,\ell, R-i, -i )\ =\ (\theta, \rho, \ell, R, i) \ =\ z\;.
\end{align*}
Furthermore, except for the reflection of the last component, $\Psi$ acts as a translation on all other components.  As both Lebesgue and counting measure are invariant under translations and reflections, the product measure $\zeta$ is also invariant under $\Psi$.

We are left to show $p_{\mathrm{joint}}\circ\Psi=p_{\mathrm{joint}}$.
By definition, we have
    \begin{align*}
	p_{\mathrm{joint}}(z)\ \propto\ \mu(\theta)\cdot\tau(\rho)\cdot p_{\orbit}(\ell,R\mid\theta,\rho) \cdot\mu(\theta+\rho hi)\;.
    \end{align*}
    Similarly, the joint density at $\Psi(z)= (\theta+\rho h i,\,\rho,\,\ell, \,R-i,\,-i)$ equals
    \begin{align*}
        p_{\mathrm{joint}}\circ\Psi(z)\ \propto\ \mu(\theta+\rho h i)\cdot \tau(\rho)\cdot p_{\orbit}(\ell,R-i\mid \theta+\rho h i,\rho) \cdot \mu(\theta)\;.
    \end{align*}
    The claim therefore follows from the symmetry
    \begin{align}
    \label{equ: orbit symmetry}
        p_{\orbit}(\ell,R\mid\theta,\rho )\ =\ p_{\orbit}(\ell,R-i\mid \theta+\rho h i, \rho)\;,
    \end{align}
    which we establish in the remainder of this proof.
    
    Given a state $\theta \in \mathbb{R}^d$, direction $\rho \in \mathbb{R}^d$, and lattice spacing $h>0$, the orbit $\calO$ is built by a sequence of random forward and backward doubling steps. 
    Let $(B_1, \dots, B_M) \sim\mathrm{Bernoulli}(1/2)^{\otimes M}$ denote the random variables determining the direction of each doubling step, where $B_k=1$ corresponds to doubling in the forward direction and $B_k=0$ corresponds to doubling in the backward direction. After $k$ doublings, the resulting orbit is denoted by \[
    \calO_k=(\theta^{\leftmost}_k,\ldots,\theta^{\rightmost}_k)
    \]  where the indices of the endpoints are defined as
    \begin{equation*}
        \begin{alignedat}{2}
            \theta^{\leftmost}_k \ &= \ \theta - a_k \,h\, \rho  &\qquad \text{where } \quad &a_k \ = \ \sum_{i=1}^k (B_i-1)2^{i-1}\;,\quad\text{and}\\
            \theta^{\rightmost}_k \ &= \ \theta + b_k \,h\, \rho  &\qquad \text{where } \quad &b_k \ = \ \sum_{i=1}^k B_i 2^{i-1}\;.
        \end{alignedat}
    \end{equation*}
    In other words, the rightmost point of $\calO_k$ is obtained by taking $b_k$ steps from $\theta$ in the direction $\rho$, and the leftmost point of $\calO_k$ is obtained by taking $a_k$ steps in the opposite direction. Since the stopping condition is deterministic, given $\theta$ and $\rho$, the only randomness entering the orbit selection comes from the Bernoulli random variables.
    
    Suppose $|\calO|=2^\ell$, meaning the doubling procedure stops after $\ell$ doublings. If the rightmost point of $\calO$ is $R$ steps away from $\theta$, then the binary expansion $R=\sum_{i=1}^\ell B_i 2^{i-1}$ uniquely determines $(B_1,\ldots,B_\ell)$.
    Hence, there is exactly one sequence $(B_1,\ldots,B_\ell)\in\{0,1\}^\ell$ that generates the orbit $\calO$. 
     
     If $\ell=M$, then the maximum number of doublings is reached, and $\calO$ does not satisfy the sub-stopping condition, i.e., neither $\calO$ nor any of its sub-orbits (as defined in \eqref{eq:sub-orbits}) satisfies the stopping condition.  
     
     If $\ell<M$, two scenarios are possible:  \begin{itemize}
  \item  $\calO$ meets the stopping condition but no sub-orbit of $\calO$ does, or
  \item $\calO$ does not meet the sub-stopping condition but its extension $\calO_{\ell+1}^{\ext}$ does.
  \end{itemize} 
  
  Therefore, the probability of selecting $\calO$ equals
    \begin{equation}\label{equ: orbit probability}
    \begin{aligned}
    p_{\orbit}(\calO\mid\theta,\rho)\ =\ \frac{1}{2^\ell}\cdot \mathbf{1} & \bigr\{\ell=M \text{ or } \calO \text{ meets the stopping condition} \\
   & \quad \text{ or } \calO_{\ell+1}^{\ext} \text{ meets the sub-stopping condition}  \bigr\} \;.
    \end{aligned}
    \end{equation} 
  
    In particular, the stopping condition only depends on the orbit, and not on any particular starting state. Therefore, the probability of selecting $\calO$ given in \eqref{equ: orbit probability} satisfies
    \begin{align*}
        p_{\orbit}(\calO\mid\tilde\theta,\rho)\ =\ p_{\orbit}(\calO\mid \theta,\rho)\quad \text{for all $\tilde\theta\in\calO$.}
    \end{align*} 

    For $\tilde\theta=\theta+\rho h i\in\calO$, the rightmost endpoint of $\calO$ is $R-i$ steps away from $\tilde\theta$. That is, starting from $(\theta+\rho h i, \rho)$, the orbit $\calO$ corresponds to $(\ell, R-i)$.
    Therefore, the last display corresponds to \eqref{equ: orbit symmetry}.
\end{proof}

\section{Conclusion \& Outlook}
\label{sec:outlook}

This work introduces the No-Underrun Sampler (NURS), a locally adaptive, gradient-free MCMC method that combines coordinate-free direction selection, orbit-based exploration, a No-Underrun condition and categorical state selection to enable efficient sampling.  NURS provides a practical alternative to Hit-and-Run, a theoretically appealing but computationally impractical coordinate-free Gibbs sampler.

The theoretical analysis in the paper prove key properties of NURS, including reversibility, Wasserstein contraction, and connections to Hit-and-Run and Random Walk Metropolis, providing a framework for understanding its behavior and guiding its parameter tuning.  Empirical tests on Neal’s funnel show that while NURS moves diffusively in narrow regions, its ballistic movement in broader regions offsets this, enabling efficient sampling across the distribution’s different scales. 

Several promising directions remain for further exploration:
\begin{itemize}
\item Incorporating local lattice size adaptivity could remove the need for tuning the lattice size.  The GIST framework, recently developed for incorporating step-size adaptivity in the No-U-Turn Sampler \cite{BouRabeeCarpenterMarsden2024,BouRabeeCarpenterKleppeMarsden2024}, provides a natural framework for such extensions. 
\item Developing optimized implementations, including parallelization strategies, that could fully leverage NURS's inherent parallelizability for large-scale applications.
\item Proving convergence guarantees that encompass general target distributions would deepen the understanding of NURS. Promising approaches include conductance \cite{andrieuAAP,chen2022hit} and coupling \cite{BouRabeeOberdoerster2023}.
\item Locally adapting the direction distribution $\tau$ using an ensemble of chains, as in the ensemble samplers with affine invariance \cite{foreman2013emcee,goodman2010ensemble}.
\end{itemize}

\subsection*{Acknowledgements}

Thank you to Andreas Eberle and Sam Power for useful discussions and suggestions.  N.~Bou-Rabee was partially supported by NSF grant  No.~DMS-2111224.
S.~Oberd\"orster was supported by the Deutsche Forschungsgemeinschaft (DFG, German Research Foundation) under Germany’s Excellence Strategy – EXC-2047/1 – 390685813.

\bibliographystyle{imsart-number}
\bibliography{all.bib}  

\begin{thebibliography}{40}

\bibitem{Diaconis2007HitandRun}
\begin{barticle}[author]
\bauthor{\bsnm{Andersen},~\bfnm{Hans}\binits{H.}} \AND
  \bauthor{\bsnm{Diaconis},~\bfnm{Persi}\binits{P.}}
(\byear{2007}).
\btitle{Hit and run as a unifying device}.
\bjournal{J. Soc. Fr. Stat. \& Rev. Stat. Appl.}
\bvolume{148}.
\end{barticle}
\endbibitem

\bibitem{andrieuAAP}
\begin{barticle}[author]
\bauthor{\bsnm{Andrieu},~\bfnm{Christophe}\binits{C.}},
  \bauthor{\bsnm{Lee},~\bfnm{Anthony}\binits{A.}},
  \bauthor{\bsnm{Power},~\bfnm{Sam}\binits{S.}} \AND
  \bauthor{\bsnm{Wang},~\bfnm{Andi~Q.}\binits{A.~Q.}}
(\byear{2024}).
\btitle{{Explicit convergence bounds for Metropolis Markov chains:
  Isoperimetry, spectral gaps and profiles}}.
\bjournal{The Annals of Applied Probability}
\bvolume{34}
\bpages{4022 -- 4071}.
\end{barticle}
\endbibitem

\bibitem{ascolani2024}
\begin{bmisc}[author]
\bauthor{\bsnm{Ascolani},~\bfnm{Filippo}\binits{F.}},
  \bauthor{\bsnm{Lavenant},~\bfnm{Hugo}\binits{H.}} \AND
  \bauthor{\bsnm{Zanella},~\bfnm{Giacomo}\binits{G.}}
(\byear{2024}).
\btitle{Entropy contraction of the Gibbs sampler under log-concavity}.
\end{bmisc}
\endbibitem

\bibitem{betancourt2017conceptual}
\begin{bmisc}[author]
\bauthor{\bsnm{Betancourt},~\bfnm{Michael}\binits{M.}}
(\byear{2017}).
\btitle{A conceptual introduction to Hamiltonian Monte Carlo}.
\end{bmisc}
\endbibitem

\bibitem{betancourt2013hamiltonianmontecarlohierarchical}
\begin{bmisc}[author]
\bauthor{\bsnm{Betancourt},~\bfnm{M.~J.}\binits{M.~J.}} \AND
  \bauthor{\bsnm{Girolami},~\bfnm{Mark}\binits{M.}}
(\byear{2013}).
\btitle{Hamiltonian Monte Carlo for Hierarchical Models}.
\end{bmisc}
\endbibitem

\bibitem{BouRabeeCarpenterKleppeMarsden2024}
\begin{bmisc}[author]
\bauthor{\bsnm{Bou-Rabee},~\bfnm{Nawaf}\binits{N.}},
  \bauthor{\bsnm{Carpenter},~\bfnm{Bob}\binits{B.}},
  \bauthor{\bsnm{Kleppe},~\bfnm{Tore}\binits{T.}} \AND
  \bauthor{\bsnm{Marsden},~\bfnm{Milo}\binits{M.}}
(\byear{2024}).
\btitle{Incorporating Local Step-Size Adaptivity into the No-U-Turn Sampler
  using Gibbs Self Tuning}.
\end{bmisc}
\endbibitem

\bibitem{BouRabeeCarpenterMarsden2024}
\begin{bmisc}[author]
\bauthor{\bsnm{Bou-Rabee},~\bfnm{Nawaf}\binits{N.}},
  \bauthor{\bsnm{Carpenter},~\bfnm{Bob}\binits{B.}} \AND
  \bauthor{\bsnm{Marsden},~\bfnm{Milo}\binits{M.}}
(\byear{2024}).
\btitle{GIST: Gibbs self-tuning for locally adaptive Hamiltonian Monte Carlo}.
\end{bmisc}
\endbibitem

\bibitem{BoEbOb2024}
\begin{bmisc}[author]
\bauthor{\bsnm{Bou-Rabee},~\bfnm{Nawaf}\binits{N.}},
  \bauthor{\bsnm{Eberle},~\bfnm{Andreas}\binits{A.}} \AND
  \bauthor{\bsnm{Oberd{\"o}rster},~\bfnm{Stefan}\binits{S.}}
(\byear{2024}).
\btitle{Ballistic Convergence in Hit-and-Run Monte Carlo and a Coordinate-free
  Randomized Kaczmarz Algorithm}.
\end{bmisc}
\endbibitem

\bibitem{BoEbZi2020}
\begin{barticle}[author]
\bauthor{\bsnm{Bou-Rabee},~\bfnm{Nawaf}\binits{N.}},
  \bauthor{\bsnm{Eberle},~\bfnm{Andreas}\binits{A.}} \AND
  \bauthor{\bsnm{Zimmer},~\bfnm{Raphael}\binits{R.}}
(\byear{2020}).
\btitle{Coupling and convergence for Hamiltonian Monte Carlo}.
\bjournal{Ann. Appl. Probab.}
\bvolume{30}
\bpages{1209--1250}.
\end{barticle}
\endbibitem

\bibitem{BouRabeeOberdoerster2023}
\begin{barticle}[author]
\bauthor{\bsnm{Bou-Rabee},~\bfnm{Nawaf}\binits{N.}} \AND
  \bauthor{\bsnm{Oberd{\"o}rster},~\bfnm{Stefan}\binits{S.}}
(\byear{2024}).
\btitle{{Mixing of Metropolis-adjusted Markov chains via couplings: The high
  acceptance regime}}.
\bjournal{Electronic Journal of Probability}
\bvolume{29}
\bpages{1 -- 27}.
\end{barticle}
\endbibitem

\bibitem{NUTS1}
\begin{bmisc}[author]
\bauthor{\bsnm{Bou-Rabee},~\bfnm{Nawaf}\binits{N.}} \AND
  \bauthor{\bsnm{Oberdörster},~\bfnm{Stefan}\binits{S.}}
(\byear{2024}).
\btitle{Mixing of the No-U-Turn Sampler and the Geometry of Gaussian
  Concentration}.
\end{bmisc}
\endbibitem

\bibitem{carpenter2016stan}
\begin{barticle}[author]
\bauthor{\bsnm{Carpenter},~\bfnm{B.}\binits{B.}},
  \bauthor{\bsnm{Gelman},~\bfnm{A.}\binits{A.}},
  \bauthor{\bsnm{Hoffman},~\bfnm{M.}\binits{M.}},
  \bauthor{\bsnm{Lee},~\bfnm{D.}\binits{D.}},
  \bauthor{\bsnm{Goodrich},~\bfnm{B.}\binits{B.}},
  \bauthor{\bsnm{Betancourt},~\bfnm{M.}\binits{M.}},
  \bauthor{\bsnm{Brubaker},~\bfnm{M.~A.}\binits{M.~A.}},
  \bauthor{\bsnm{Guo},~\bfnm{J.}\binits{J.}},
  \bauthor{\bsnm{Li},~\bfnm{P.}\binits{P.}} \AND
  \bauthor{\bsnm{Riddell},~\bfnm{A.}\binits{A.}}
(\byear{2016}).
\btitle{Stan: A probabilistic programming language}.
\bjournal{Journal of Statistical Software}
\bvolume{20}
\bpages{1--37}.
\end{barticle}
\endbibitem

\bibitem{chen1998toward}
\begin{barticle}[author]
\bauthor{\bsnm{Chen},~\bfnm{Ming-Hui}\binits{M.-H.}} \AND
  \bauthor{\bsnm{Schmeiser},~\bfnm{Bruce}\binits{B.}}
(\byear{1998}).
\btitle{Toward black-box sampling: A random-direction interior-point Markov
  chain approach}.
\bjournal{Journal of Computational and Graphical Statistics}
\bvolume{7}
\bpages{1--22}.
\end{barticle}
\endbibitem

\bibitem{chen2022hit}
\begin{bmisc}[author]
\bauthor{\bsnm{Chen},~\bfnm{Yuansi}\binits{Y.}} \AND
  \bauthor{\bsnm{Eldan},~\bfnm{Ronen}\binits{R.}}
(\byear{2022}).
\btitle{Hit-and-run mixing via localization schemes}.
\end{bmisc}
\endbibitem

\bibitem{chewi2022query}
\begin{binproceedings}[author]
\bauthor{\bsnm{Chewi},~\bfnm{Sinho}\binits{S.}},
  \bauthor{\bsnm{Gerber},~\bfnm{Patrik~R}\binits{P.~R.}},
  \bauthor{\bsnm{Lu},~\bfnm{Chen}\binits{C.}},
  \bauthor{\bsnm{Le~Gouic},~\bfnm{Thibaut}\binits{T.}} \AND
  \bauthor{\bsnm{Rigollet},~\bfnm{Philippe}\binits{P.}}
(\byear{2022}).
\btitle{The query complexity of sampling from strongly log-concave
  distributions in one dimension}.
In \bbooktitle{Conference on Learning Theory}
\bpages{2041--2059}.
\bpublisher{PMLR}.
\end{binproceedings}
\endbibitem

\bibitem{nimble-article:2017}
\begin{barticle}[author]
\bauthor{\bsnm{{de Valpine}},~\bfnm{P.}\binits{P.}},
  \bauthor{\bsnm{Turek},~\bfnm{D.}\binits{D.}},
  \bauthor{\bsnm{Paciorek},~\bfnm{C.~J.}\binits{C.~J.}},
  \bauthor{\bsnm{Anderson-Bergman},~\bfnm{C.}\binits{C.}},
  \bauthor{\bsnm{{Temple Lang}},~\bfnm{D.}\binits{D.}} \AND
  \bauthor{\bsnm{Bodik},~\bfnm{R.}\binits{R.}}
(\byear{2017}).
\btitle{Programming with models: writing statistical algorithms for general
  model structures with {NIMBLE}}.
\bjournal{Journal of Computational and Graphical Statistics}
\bvolume{26}
\bpages{403-417}.
\end{barticle}
\endbibitem

\bibitem{DuKePeRo1987}
\begin{barticle}[author]
\bauthor{\bsnm{Duane},~\bfnm{S.}\binits{S.}},
  \bauthor{\bsnm{Kennedy},~\bfnm{A.~D.}\binits{A.~D.}},
  \bauthor{\bsnm{Pendleton},~\bfnm{B.~J.}\binits{B.~J.}} \AND
  \bauthor{\bsnm{Roweth},~\bfnm{D.}\binits{D.}}
(\byear{1987}).
\btitle{Hybrid {M}onte-{C}arlo}.
\bjournal{Physics Letters B}
\bvolume{195}
\bpages{216--222}.
\end{barticle}
\endbibitem

\bibitem{foreman2013emcee}
\begin{barticle}[author]
\bauthor{\bsnm{Foreman-Mackey},~\bfnm{Daniel}\binits{D.}},
  \bauthor{\bsnm{Hogg},~\bfnm{David~W}\binits{D.~W.}},
  \bauthor{\bsnm{Lang},~\bfnm{Dustin}\binits{D.}} \AND
  \bauthor{\bsnm{Goodman},~\bfnm{Jonathan}\binits{J.}}
(\byear{2013}).
\btitle{emcee: the MCMC hammer}.
\bjournal{Publications of the Astronomical Society of the Pacific}
\bvolume{125}
\bpages{306}.
\end{barticle}
\endbibitem

\bibitem{ge2018t}
\begin{binproceedings}[author]
\bauthor{\bsnm{Ge},~\bfnm{Hong}\binits{H.}},
  \bauthor{\bsnm{Xu},~\bfnm{Kai}\binits{K.}} \AND
  \bauthor{\bsnm{Ghahramani},~\bfnm{Zoubin}\binits{Z.}}
(\byear{2018}).
\btitle{Turing: A language for flexible probabilistic inference}.
In \bbooktitle{International Conference on Artificial Intelligence and
  Statistics, ({AISTATS})}
\bpages{1682--1690}.
\end{binproceedings}
\endbibitem

\bibitem{goodman2010ensemble}
\begin{barticle}[author]
\bauthor{\bsnm{Goodman},~\bfnm{Jonathan}\binits{J.}} \AND
  \bauthor{\bsnm{Weare},~\bfnm{Jonathan}\binits{J.}}
(\byear{2010}).
\btitle{Ensemble samplers with affine invariance}.
\bjournal{Communications in applied mathematics and computational science}
\bvolume{5}
\bpages{65--80}.
\end{barticle}
\endbibitem

\bibitem{HoGe2014}
\begin{barticle}[author]
\bauthor{\bsnm{Hoffman},~\bfnm{M.~D.}\binits{M.~D.}} \AND
  \bauthor{\bsnm{Gelman},~\bfnm{A.}\binits{A.}}
(\byear{2014}).
\btitle{The {N}o-{U}-{T}urn {S}ampler: Adaptively setting path lengths in
  Hamiltonian Monte Carlo}.
\bjournal{Journal of Machine Learning Research}
\bvolume{15}
\bpages{1593--1623}.
\end{barticle}
\endbibitem

\bibitem{JOULIN2010}
\begin{barticle}[author]
\bauthor{\bsnm{Joulin},~\bfnm{Ald\'eric}\binits{A.}} \AND
  \bauthor{\bsnm{Ollivier},~\bfnm{Yann}\binits{Y.}}
(\byear{2010}).
\btitle{Curvature, concentration and error estimates for {M}arkov chain {M}onte
  {C}arlo}.
\bjournal{Ann. Probab.}
\bvolume{38}
\bpages{2418--2442}.
\end{barticle}
\endbibitem

\bibitem{mira2003slice}
\begin{barticle}[author]
\bauthor{\bsnm{Mira},~\bfnm{Antonietta}\binits{A.}} \AND
  \bauthor{\bsnm{Roberts},~\bfnm{Gareth~O}\binits{G.~O.}}
(\byear{2003}).
\btitle{Slice sampling: Discussion}.
\bjournal{The Annals of Statistics}
\bvolume{31}
\bpages{748--753}.
\end{barticle}
\endbibitem

\bibitem{mira2002efficiency}
\begin{barticle}[author]
\bauthor{\bsnm{Mira},~\bfnm{Antonietta}\binits{A.}} \AND
  \bauthor{\bsnm{Tierney},~\bfnm{Luke}\binits{L.}}
(\byear{2002}).
\btitle{Efficiency and convergence properties of slice samplers}.
\bjournal{Scandinavian Journal of Statistics}
\bvolume{29}
\bpages{1--12}.
\end{barticle}
\endbibitem

\bibitem{murray2010elliptical}
\begin{binproceedings}[author]
\bauthor{\bsnm{Murray},~\bfnm{Iain}\binits{I.}},
  \bauthor{\bsnm{Adams},~\bfnm{Ryan}\binits{R.}} \AND
  \bauthor{\bsnm{MacKay},~\bfnm{David}\binits{D.}}
(\byear{2010}).
\btitle{Elliptical slice sampling}.
In \bbooktitle{Proceedings of the thirteenth international conference on
  artificial intelligence and statistics}
\bpages{541--548}.
\bpublisher{JMLR Workshop and Conference Proceedings}.
\end{binproceedings}
\endbibitem

\bibitem{natarovskii2021quantitative}
\begin{barticle}[author]
\bauthor{\bsnm{Natarovskii},~\bfnm{Viacheslav}\binits{V.}},
  \bauthor{\bsnm{Rudolf},~\bfnm{Daniel}\binits{D.}} \AND
  \bauthor{\bsnm{Sprungk},~\bfnm{Bj\"{o}rn}\binits{B.}}
(\byear{2021}).
\btitle{Quantitative spectral gap estimate and Wasserstein contraction of
  simple slice sampling}.
\bjournal{The Annals of Applied Probability}
\bvolume{31}
\bpages{806--825}.
\end{barticle}
\endbibitem

\bibitem{neal1997markov}
\begin{btechreport}[author]
\bauthor{\bsnm{Neal},~\bfnm{Radford~M}\binits{R.~M.}}
(\byear{1997}).
\btitle{Markov chain Monte Carlo methods based on ``slicing'' the density
  function}
\btype{Technical Report} No. \bnumber{9722},
\bpublisher{Dept.~Statistics, Univ.~Toronto}.
\end{btechreport}
\endbibitem

\bibitem{Neal2003Slice}
\begin{barticle}[author]
\bauthor{\bsnm{Neal},~\bfnm{Radford~M.}\binits{R.~M.}}
(\byear{2003}).
\btitle{{Slice sampling}}.
\bjournal{The Annals of Statistics}
\bvolume{31}
\bpages{705 -- 767}.
\end{barticle}
\endbibitem

\bibitem{Ne2011}
\begin{barticle}[author]
\bauthor{\bsnm{Neal},~\bfnm{R.~M.}\binits{R.~M.}}
(\byear{2011}).
\btitle{{MCMC} using {H}amiltonian dynamics}.
\bjournal{Handbook of {M}arkov {C}hain {M}onte {C}arlo}
\bvolume{2}
\bpages{113-162}.
\end{barticle}
\endbibitem

\bibitem{OLLIVIER2009}
\begin{barticle}[author]
\bauthor{\bsnm{Ollivier},~\bfnm{Yann}\binits{Y.}}
(\byear{2009}).
\btitle{Ricci curvature of Markov chains on metric spaces}.
\bjournal{Journal of Functional Analysis}
\bvolume{256}
\bpages{810-864}.
\end{barticle}
\endbibitem

\bibitem{phan2019composable}
\begin{bmisc}[author]
\bauthor{\bsnm{Phan},~\bfnm{Du}\binits{D.}},
  \bauthor{\bsnm{Pradhan},~\bfnm{Neeraj}\binits{N.}} \AND
  \bauthor{\bsnm{Jankowiak},~\bfnm{Martin}\binits{M.}}
(\byear{2019}).
\btitle{Composable effects for flexible and accelerated probabilistic
  programming in {N}um{P}yro}.
\end{bmisc}
\endbibitem

\bibitem{power2024weak}
\begin{bmisc}[author]
\bauthor{\bsnm{Power},~\bfnm{Sam}\binits{S.}},
  \bauthor{\bsnm{Rudolf},~\bfnm{Daniel}\binits{D.}},
  \bauthor{\bsnm{Sprungk},~\bfnm{Bj{\"o}rn}\binits{B.}} \AND
  \bauthor{\bsnm{Wang},~\bfnm{Andi~Q}\binits{A.~Q.}}
(\byear{2024}).
\btitle{Weak Poincar\'e inequality comparisons for ideal and hybrid slice
  sampling}.
\end{bmisc}
\endbibitem

\bibitem{RobertCassella}
\begin{bbook}[author]
\bauthor{\bsnm{Robert},~\bfnm{Christian~P.}\binits{C.~P.}} \AND
  \bauthor{\bsnm{Casella},~\bfnm{George}\binits{G.}}
(\byear{2004}).
\btitle{Monte {C}arlo statistical methods},
\bedition{second} ed.
\bseries{Springer Texts in Statistics}.
\bpublisher{Springer-Verlag, New York}.
\end{bbook}
\endbibitem

\bibitem{roberts1999convergence}
\begin{barticle}[author]
\bauthor{\bsnm{Roberts},~\bfnm{Gareth~O}\binits{G.~O.}} \AND
  \bauthor{\bsnm{Rosenthal},~\bfnm{Jeffrey~S}\binits{J.~S.}}
(\byear{1999}).
\btitle{Convergence of slice sampler Markov chains}.
\bjournal{Journal of the Royal Statistical Society Series B: Statistical
  Methodology}
\bvolume{61}
\bpages{643--660}.
\end{barticle}
\endbibitem

\bibitem{roberts2002polar}
\begin{barticle}[author]
\bauthor{\bsnm{Roberts},~\bfnm{Gareth~O}\binits{G.~O.}} \AND
  \bauthor{\bsnm{Rosenthal},~\bfnm{Jeffrey~S}\binits{J.~S.}}
(\byear{2002}).
\btitle{The polar slice sampler}.
\bjournal{Stochastic Models}
\bvolume{18}
\bpages{257--280}.
\end{barticle}
\endbibitem

\bibitem{rudolf2024dimension}
\begin{barticle}[author]
\bauthor{\bsnm{Rudolf},~\bfnm{Daniel}\binits{D.}} \AND
  \bauthor{\bsnm{Sch{\"a}r},~\bfnm{Philip}\binits{P.}}
(\byear{2024}).
\btitle{Dimension-independent spectral gap of polar slice sampling}.
\bjournal{Statistics and Computing}
\bvolume{34}
\bpages{20}.
\end{barticle}
\endbibitem

\bibitem{salvatier2016probabilistic}
\begin{barticle}[author]
\bauthor{\bsnm{Salvatier},~\bfnm{John}\binits{J.}},
  \bauthor{\bsnm{Wiecki},~\bfnm{Thomas~V}\binits{T.~V.}} \AND
  \bauthor{\bsnm{Fonnesbeck},~\bfnm{Christopher}\binits{C.}}
(\byear{2016}).
\btitle{Probabilistic programming in Python using PyMC3}.
\bjournal{PeerJ Computer Science}
\bvolume{2}
\bpages{e55}.
\end{barticle}
\endbibitem

\bibitem{schar2023wasserstein}
\begin{barticle}[author]
\bauthor{\bsnm{Sch{\"a}r},~\bfnm{Philip}\binits{P.}}
(\byear{2023}).
\btitle{Wasserstein contraction and spectral gap of slice sampling revisited}.
\bjournal{Electronic Journal of Probability}
\bvolume{28}
\bpages{1--28}.
\end{barticle}
\endbibitem

\bibitem{pmlr-v202-schar23a}
\begin{binproceedings}[author]
\bauthor{\bsnm{Sch\"{a}r},~\bfnm{Philip}\binits{P.}},
  \bauthor{\bsnm{Habeck},~\bfnm{Michael}\binits{M.}} \AND
  \bauthor{\bsnm{Rudolf},~\bfnm{Daniel}\binits{D.}}
(\byear{2023}).
\btitle{Gibbsian Polar Slice Sampling}.
In \bbooktitle{Proceedings of the 40th International Conference on Machine
  Learning}
(\beditor{\bfnm{Andreas}\binits{A.}~\bsnm{Krause}},
  \beditor{\bfnm{Emma}\binits{E.}~\bsnm{Brunskill}},
  \beditor{\bfnm{Kyunghyun}\binits{K.}~\bsnm{Cho}},
  \beditor{\bfnm{Barbara}\binits{B.}~\bsnm{Engelhardt}},
  \beditor{\bfnm{Sivan}\binits{S.}~\bsnm{Sabato}} \AND
  \beditor{\bfnm{Jonathan}\binits{J.}~\bsnm{Scarlett}}, eds.).
\bseries{Proceedings of Machine Learning Research}
\bvolume{202}
\bpages{30204--30223}.
\bpublisher{PMLR}.
\end{binproceedings}
\endbibitem

\bibitem{Łatuszyński_Rudolf_2024}
\begin{barticle}[author]
\bauthor{\bsnm{Łatuszyński},~\bfnm{Krzysztof}\binits{K.}} \AND
  \bauthor{\bsnm{Rudolf},~\bfnm{Daniel}\binits{D.}}
(\byear{2024}).
\btitle{Convergence of hybrid slice sampling via spectral gap}.
\bjournal{Advances in Applied Probability}
\bvolume{56}
\bpages{1440–1466}.
\end{barticle}
\endbibitem

\end{thebibliography}

\newpage

\appendix
\section{Pseudocode Implementation of NURS} \label{sec:pseudocode}

\begin{mylisting}[ht]
    \begin{flushleft}
    $\texttt{\bfseries NURS}\!\left(\theta,  \epsilon, h,  M \right)$
    \vspace*{2pt}
    \hrule
    \vspace*{2pt}
    Inputs:
    \begin{tabular}[t]{ll}
    $\mu:\mathbb{R}^d \rightarrow (0, \infty)$ & non-normalized target density
    \\
    $\theta \in \mathbb{R}^d$ & starting point
    \\
    $\epsilon \ge 0$ & density threshold
    \\
    $h > 0$ & lattice spacing
    \\
    $M \in \mathbb{N}$ & maximum number of doublings (tree depth)
    \\
    \end{tabular} 
    \\[2pt]
    Return: 
    \begin{tabular}[t]{ll}
    $\tilde{\theta} \in \mathbb{R}^d$ & next draw
    \end{tabular}
    \vspace*{4pt}
    \hrule
    \vspace*{6pt}

    $\rho \sim \Unif(\mathbb{S}^{d-1})$

    $u\sim \Unif((0,1))$

    $s \sim \Unif([-h/2,h/2])$

    if $u \leq \dfrac{\mu(\theta+s \rho)}
                            {\mu(\theta)}:$
                            
    \qquad $\tilde \theta=\theta + s \rho$
    
    else: 
    
    \qquad $\tilde \theta=\theta$
    
    $\calO=( \tilde \theta )$ 
    
    $B \sim \Unif(\{0,1\}^M)$

    for $k$ from $1$ to $M$:

    \qquad $\calO^{\old} = \calO$
    
    \qquad if $B_k=1$:

    \qquad \qquad $\calO^{\ext} = \texttt{extend-orbit}(\theta^{\rightmost}, \rho, h, 2^{k-1}) $
    \\
    \qquad \qquad $\calO =\calO^{\old} \odot \calO^{\ext} $

    \qquad else:\hfill

   \qquad \qquad $\calO^{\ext} = \texttt{extend-orbit}(\theta^{\leftmost}, \rho,-h,2^{k-1}) $
   \\
   \qquad \qquad $\calO =\calO^{\ext} \odot  \calO^{\old}  $

    \qquad if $\texttt{sub-stop}(\calO^{\ext},\epsilon,h)$:
    
    \qquad \qquad break

    \qquad  $u\sim  \Unif((0,1))$

    \qquad if $u \leq \dfrac{\sum_{\breve{\theta} \in \calO^{\ext}} \mu(\breve{\theta})}
                            {\sum_{\breve{\theta} \in \calO} \mu(\breve{\theta})}:$
    

    \qquad\qquad $\tilde\theta \sim \cat( \calO^{\ext}, \mu)$

    \qquad if $\texttt{stop}(\calO,\epsilon,h)$:
    
    \qquad \qquad break

    return $\tilde \theta$\hfill
    \vspace*{6pt}
    \hrule
    \caption{\it The No-Underrun Sampler.  
    The \emph{\texttt{extend-orbit()}} function is given in Listing~\ref{lst:extend-orbit}, \emph{\texttt{stop()}} in Listing~\ref{lst:stop}, and \emph{\texttt{sub-stop()}} in Lising~\ref{lst:sub-stop}.}
    \label{lst:nurs}
    \end{flushleft}
\end{mylisting}

\begin{mylisting}[ht]
    \begin{flushleft}
    $\texttt{\bfseries extend-orbit}\!\left(\theta, \rho, h, n\right)$
    \vspace*{2pt}
    \hrule
    \vspace*{2pt}
    Inputs:
    \begin{tabular}[t]{ll}
      $\theta \in \mathbb{R}^d$ & initial position
      \\
      $\rho \in \mathbb{R}^d$ & direction
      \\
      $h \ne 0$ & lattice spacing
      \\
      $n \in \mathbb{N}$ & number of steps
    \end{tabular} 
    \\[2pt]
    Return: 
    \begin{tabular}[t]{ll}
      $\left( \mathbb{R}^d \right)^n$ & orbit
    \end{tabular}
    \vspace*{4pt}
    \hrule
    \vspace*{8pt}
    if $h > 0$

    \qquad return $\theta + \rho \cdot h \cdot (1, 2, \ldots, n)$

    else

    \qquad return $\theta + \rho \cdot h \cdot (n, n-1, \ldots, 1)$

    \vspace*{4pt}
    \hrule
    \vspace*{4pt}
    \caption{\it Extend the orbit from a specified starting point for the specified number of steps with the specified direction and lattice spacing.  Here, a list is treated like a vector for arithmetic purposes. \hfill \null}
    \label{lst:extend-orbit}
    \end{flushleft}
\end{mylisting}

\begin{mylisting}[ht]
    \begin{flushleft}
    $\texttt{\bfseries stop}\!\left(\calO,\epsilon,h\right)$
    \vspace*{2pt}
    \hrule
    \vspace*{2pt}
    Inputs:
    \begin{tabular}[t]{ll}
     $\calO= \left(\theta^{\leftmost}, \dots, \theta^{\rightmost}\right)$  
     & orbit, with $|\calO|$ a power of 2
     \\
     $\epsilon \ge 0$ & density threshold \\
    $h > 0$ & lattice spacing
    \end{tabular} 
    \\[2pt]
    Return: 
    \begin{tabular}[t]{ll}
    \texttt{boolean} & \texttt{True} if $\calO$ satisfies the stopping criterion
    \end{tabular}
    \vspace*{4pt}
    \hrule
    \vspace*{8pt}
return $\max\bigr(\mu(\theta^{\leftmost}),\,\mu(\theta^{\rightmost}) \bigr) \ \leq \ \epsilon \;h \, \sum_{\breve{\theta} \in \calO} \mu(\breve{\theta})$ 
    \vspace*{4pt}
    \hrule
    \vspace*{4pt}
    \caption{\it Check if the orbit $\calO$ satisfies the No-Underrun stopping condition in \eqref{eq:no-Underrun}. \hfill\null}
    \label{lst:stop}
    \end{flushleft}
\end{mylisting}
    
\begin{mylisting}[ht]
    \begin{flushleft}
    $\texttt{\bfseries sub-stop}(\calO,\epsilon,h)$
    \vspace*{2pt}
    \hrule
    \vspace*{2pt}
    Inputs:
    \begin{tabular}[t]{ll}
    $\calO = \calO^{\leftmost} \odot \calO^{\rightmost}$    
    & orbit, with $|\calO^\leftmost| = |\calO^\rightmost|$ a power of 2
    \\
     $\epsilon \ge 0$ & density threshold \\
    $h > 0$ & lattice spacing
    \end{tabular} 
    \\[2pt]
    Return:
    \begin{tabular}[t]{ll}
    \texttt{boolean} & \texttt{True} if $\calO$ or a subtree satisfies the stopping criterion
    \end{tabular}
    \vspace*{4pt}
    \hrule
    \vspace*{8pt}
    if $\text{length}(\calO) < 2$: \ return \texttt{False}
    \\[4pt]
 return $ \texttt{stop}(\calO, \epsilon,h)
        ~~\texttt{or}~~ \texttt{sub-stop}(\calO^{\leftmost}, \epsilon,h)
        ~~ \texttt{or}~~ \texttt{sub-stop}(\calO^{\rightmost}, \epsilon,h)$
    \vspace*{8pt}
    \hrule
    \vspace*{4pt}
    \caption{\it Check if the orbit or any of its sub-orbits obtained by repeated halving, as defined in \eqref{eq:sub-orbits}, satisfy the stopping condition.
    Function \emph{\texttt{stop()}} is defined in Listing~\ref{lst:stop}.\hfill\null}
    \label{lst:sub-stop}
    \end{flushleft}
\end{mylisting}

\end{document}